\documentclass[12pt]{amsart}

\usepackage{enumerate, amsmath, amsthm, amsfonts, amssymb, xy,  mathrsfs, graphicx, paralist, fancyvrb, eucal}
\usepackage[usenames, dvipsnames]{xcolor}
\usepackage[margin=1in]{geometry} 
\usepackage[bookmarks, colorlinks=true, linkcolor=blue, citecolor=blue, urlcolor=blue]{hyperref}

\input xy
\xyoption{all}

\numberwithin{equation}{section}
\newtheorem{theorem}[equation]{Theorem}

\newtheorem{proposition}[equation]{Proposition}
\newtheorem{lemma}[equation]{Lemma}
\newtheorem{corollary}[equation]{Corollary}

\newenvironment{step}[1]
 {\stepx}
 {\endstepx}

\newenvironment{substep}[1]
 {\substepx}
 {\endsubstepx}

\theoremstyle{definition}
\newtheorem{rmk}[equation]{Remark}
\newenvironment{remark}[1][]{\begin{rmk}[#1] \pushQED{\qed}}{\popQED \end{rmk}}
\newtheorem{eg}[equation]{Example}
\newenvironment{example}[1][]{\begin{eg}[#1] \pushQED{\qed}}{\popQED \end{eg}}
\newtheorem{defn}[equation]{Definition}
\newenvironment{definition}[1][]{\begin{defn}[#1]\pushQED{\qed}}{\popQED \end{defn}}

\makeatletter
\@addtoreset{equation}{section}
\renewcommand{\thesubsection}{%
  \ifnum\c@subsection<1 \@arabic\c@section
  \else \thesection.\@arabic\c@subsection
  \fi
}
\makeatother

\newcommand{\bB}{\mathbf{B}}

\newcommand{\cC}{\mathcal{C}}

\newcommand{\cD}{\mathcal{D}}

\newcommand{\bE}{\mathbf{E}}

\newcommand{\rE}{\mathrm{E}}

\newcommand{\bF}{\mathbf{F}}

\newcommand{\rF}{\mathrm{F}}

\newcommand{\bG}{\mathbf{G}}

\newcommand{\bH}{\mathbf{H}}

\newcommand{\rH}{\mathrm{H}}

\newcommand{\bK}{\mathbf{K}}

\newcommand{\fM}{\mathfrak{M}}

\newcommand{\bN}{\mathbf{N}}

\newcommand{\fN}{\mathfrak{N}}

\newcommand{\cO}{\mathcal{O}}

\newcommand{\bP}{\mathbf{P}}

\newcommand{\fP}{\mathfrak{P}}
\newcommand{\rP}{\mathrm{P}}

\newcommand{\bQ}{\mathbf{Q}}

\newcommand{\bS}{\mathbf{S}}

\newcommand{\bT}{\mathbf{T}}

\newcommand{\bU}{\mathbf{U}}

\newcommand{\cW}{\mathcal{W}}

\newcommand{\bZ}{\mathbf{Z}}

\newcommand{\fa}{\mathfrak{a}}

\newcommand{\bk}{\mathbf{k}}

\newcommand{\fu}{\mathfrak{u}}

\renewcommand{\phi}{\varphi}
\renewcommand{\emptyset}{\varnothing}

\newcommand{\ol}[1]{\overline{#1}}
\newcommand{\ul}[1]{\underline{#1}}

\newcommand{\arxiv}[1]{\href{http://arxiv.org/abs/#1}{{\tt arXiv:#1}}}

\makeatletter
\def\Ddots{\mathinner{\mkern1mu\raise\p@
\vbox{\kern7\p@\hbox{.}}\mkern2mu
\raise4\p@\hbox{.}\mkern2mu\raise7\p@\hbox{.}\mkern1mu}}
\makeatother


\renewcommand{\hom}{\operatorname{Hom}}

\DeclareMathOperator{\Aut}{Aut}

\DeclareMathOperator{\Mod}{Mod}

\newcommand{\GL}{\mathbf{GL}}
\newcommand{\SL}{\mathbf{SL}}


\newcommand{\Set}[2]{\ensuremath{\{\text{#1 $|$ #2}\}}}

\setcounter{tocdepth}{1}

\newcommand{\FI}{\mathbf{FI}}
\newcommand{\OI}{\mathbf{OI}}
\newcommand{\OS}{\mathbf{OS}}

\newcommand{\OVI}{\mathbf{OVI}}

\newcommand{\pt}{\mathrm{pt}}
\newcommand{\id}{\mathrm{id}}
\newcommand{\op}{\mathrm{op}}
\DeclareMathOperator{\Inc}{Inc}
\DeclareMathOperator{\Rep}{Rep}
\DeclareMathOperator{\ini}{in}
\DeclareMathOperator{\fin}{in}
\DeclareMathOperator{\colim}{colim}

\newtheorem*{theorem*}{Theorem}

\DeclareMathOperator{\Hom}{Hom}
\DeclareMathOperator{\Ind}{Ind}

\newcommand{\ox}{\overline{x}}
\newcommand{\oy}{\overline{y}}
\newcommand{\orr}{\overline{r}}

\DeclareMathOperator{\tors}{tor}

\title{Stability in the homology of unipotent groups}
\author{Andrew Putman}
\address{Department of Mathematics, University of Notre Dame, South Bend, IN}
\email{\href{mailto:andyp@nd.edu}{andyp@nd.edu}}
\urladdr{\url{http://www.nd.edu/~andyp/}}
\thanks{AP was partially supported by NSF DMS-1737434.}

\author{Steven V Sam}
\address{Department of Mathematics, University of Wisconsin, Madison, WI}
\email{\href{mailto:svs@math.wisc.edu}{svs@math.wisc.edu}}
\urladdr{\url{http://math.wisc.edu/~svs/}}
\thanks{SS was partially supported by NSF DMS-1500069, DMS-1651327, and a Sloan research fellowship.}

\author{Andrew Snowden}
\address{Department of Mathematics, University of Michigan, Ann Arbor, MI}
\email{\href{mailto:asnowden@umich.edu}{asnowden@umich.edu}}
\urladdr{\url{http://www-personal.umich.edu/~asnowden/}}
\thanks{AS was partially supported by NSF DMS-1303082, DMS-1453893, and a Sloan research fellowship.}

\subjclass[2010]{%
16P40, 
20J05. 
}

\date{December 19, 2018}

\begin{document}

\begin{abstract}
Let $R$ be a (not necessarily commutative) ring whose additive group is finitely generated and let $U_n(R) \subset \GL_n(R)$ be the group of upper-triangular unipotent matrices over $R$. We study how the homology groups of $U_n(R)$ vary with $n$ from the point of view of representation stability.  Our main theorem asserts that if for each $n$ we have representations $M_n$ of $U_n(R)$ over a ring $\bk$ that are appropriately compatible and satisfy suitable finiteness hypotheses, then the rule $[n] \mapsto \rH_i(U_n(R),M_n)$ defines a finitely generated $\OI$-module.  As a consequence, if $\bk$ is a field then $\dim \rH_i(U_n(R),\bk)$ is eventually equal to a polynomial in $n$.  We also prove similar results for the Iwahori subgroups of $\GL_n(\cO)$ for number rings $\cO$. 
\end{abstract}

\newpage

\maketitle
\tableofcontents

\section{Introduction}

\subsection{Homology of unipotent groups}

Groups of the form $G(R)$, with $G$ a linear algebraic group over a ring $R$, are among the most common and important groups encountered in mathematics. It is therefore a natural problem to understand their group homology, as homology is one of the most important invariants of a group. In the case where $G$ is reductive, this problem has been studied intensively and much is known.  See, for instance, \cite{BorelStable} for $G$ a classical group and
$R$ a number ring, and \cite{QuillenFinite} for $G = \GL_n$ and $R$ a finite field.  These
computations are closely connected to algebraic K-theory.

On the other hand, when $G$ is a unipotent group, comparatively little is known. In fact, the class of unipotent groups is fairly wild, so there might not be too much one can say in complete generality. Let $U_n \subset \GL_n$ be the group of upper-unitriangular matrices. These are perhaps the most important unipotent groups; for example, Engel's Theorem 
\cite[Corollary I.4.8]{BorelAlgebraic} shows 
that any unipotent group embeds into one of them.
Nonetheless, the homology of even these groups is poorly understood. The purpose of this paper is to establish some new results in this direction.

To illustrate the difficulties in computing the homology of $U_n(R)$, let us consider the first few cases. We take $R=\bF_p$ for simplicity. The group $U_1(\bF_p)$ is trivial. The group $U_2(\bF_p)$ is simply isomorphic to the additive group of $\bF_p$, i.e., $\bZ/p\bZ$, and the homology of this group is known (it is $\bZ/p\bZ$ in even degrees and vanishes in odd degrees). The group $U_3(\bF_p)$ is a non-abelian group of order $p^3$. It fits into an exact sequence
\begin{displaymath}
1 \to \bZ/p\bZ \to U_3(\bF_p) \to (\bZ/p\bZ)^2 \to 1,
\end{displaymath}
where the left $\bZ/p\bZ$ is the center of $U_3(\bF_p)$. We therefore have a spectral sequence (the Leray--Serre spectral sequence) that computes the homology of $U_3(\bF_p)$ in terms of the homology of the outer groups:
\begin{displaymath}
\rE^2_{p,q} = \rH_p((\bZ/p\bZ)^2, \rH_q(\bZ/p\bZ, \bZ)) \implies \rH_{p+q}(U_3(\bF_p), \bZ).
\end{displaymath}
The action of $(\bZ/p\bZ)^2$ on $\rH_q(\bZ/p\bZ, \bZ)$ is trivial, and so the groups on the $\rE^2$ page are easy to compute. However, it is less clear what the differentials are on the $\rE^2$ page, much less on subsequent pages, and so it is not obvious how to actually compute the homology of $U_3(\bF_p)$ explicitly from this spectral sequence.

The analysis of $U_3(\bF_p)$ we have just made, discouraging though it may be, does highlight a general theoretical approach to studying the homology of $U_n(\bF_p)$: this group is nilpotent, so one can break it up into abelian groups and then use the resulting spectral sequences to study its homology. Of course, this approach becomes increasingly complicated as $n$ grows, and there is probably little chance of understanding the spectral sequences in an explicit way in general.

The main point of this paper is that, although these spectral sequences become increasingly complicated, they exhibit a kind of regularity as $n$ varies. The precise formulation of this statement uses the language of representation stability, and requires some preliminaries, so for the moment we simply give a sample application to the main objects of interest:

\begin{theorem} \label{mainthm1}
Let $R$ be a (not necessarily commutative) ring whose additive group is finitely generated and let $\bk$ be a field. For all $i \ge 0$, there exists some $f_i(t) \in \bQ[t]$ such that $\dim \rH_i(U_n(R), \bk) = f_i(n)$ for $n \gg 0$.
\end{theorem}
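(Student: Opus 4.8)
The plan is to deduce Theorem~\ref{mainthm1} from the main theorem advertised in the abstract --- that $[n] \mapsto \rH_i(U_n(R), M_n)$ is a finitely generated $\OI$-module whenever the coefficient systems $M_n$ are appropriately compatible and satisfy the finiteness hypotheses --- applied to the trivial representations $M_n = \bk$, followed by the structure theory of finitely generated $\OI$-modules over a field. The first task is to promote $n \mapsto U_n(R)$ to a functor on $\OI$, the category of finite sets $[n] = \{1,\dots,n\}$ and order-preserving injections. Given an order-preserving injection $f \colon [m] \to [n]$, send a unitriangular matrix $A \in U_m(R)$ to the matrix $f_*(A) \in U_n(R)$ whose $(f(i),f(j))$ entry is $A_{ij}$ and whose remaining off-diagonal entries are $0$; because $f$ preserves order this lands in $U_n(R)$, and because $f_*(A)$ and $f_*(B)$ are both supported along $f$ one checks directly that $f_*(AB) = f_*(A)f_*(B)$ and that $f \mapsto f_*$ is functorial. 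The trivial representations $M_n = \bk$ manifestly form a compatible system over this $\OI$-group and satisfy the finiteness hypotheses vacuously, while the running assumption that the additive group of $R$ is finitely generated is exactly the finiteness input the main theorem demands of $R$.

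Granting the main theorem, we conclude that $V_i \colon [n] \mapsto \rH_i(U_n(R), \bk)$ is a finitely generated $\OI$-module over $\bk$, and it remains to recall the standard fact that a finitely generated $\OI$-module $V$ over a field has $\dim_\bk V([n])$ eventually equal to a polynomial in $n$. The representable projective $\bk[\Hom_{\OI}([d],-)]$ has value $\binom{n}{d}$ at $[n]$, which is a polynomial in $n$; since $\OI$-modules over a Noetherian ring are Noetherian, $V$ is finitely presented, and the usual argument (the $\OI$-analogue of the $\FI$-module case, via the shift functor and induction on generation degree, or equivalently rationality of the Hilbert series with denominator a power of $1-t$) then yields the eventual polynomiality. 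Taking $f_i$ to be the resulting polynomial for $V_i$ completes the deduction.

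This reduction is essentially formal, and I expect the genuine obstacle to lie entirely in the main theorem: proving that $\rH_i(U_n(R),\bk)$ is a finitely generated $\OI$-module. That will require, first, a Noetherianity statement for $\OI$-modules valued in (possibly noncommutative) module categories built from $R$, and second, an inductive mechanism compatible with the $\OI$-structure. One natural approach is to use the normal abelian subgroup of matrices supported on the last column, giving $1 \to R^{n-1} \to U_n(R) \to U_{n-1}(R) \to 1$, and to feed the Lyndon--Hochschild--Serre spectral sequences, with $E^2$-page $\rH_p(U_{n-1}(R), \rH_q(R^{n-1},\bk))$, into the $\OI$-module formalism; here $\rH_q(R^{n-1},\bk)$ must itself be handled as a finitely generated object in an appropriate category, using that $R^{n-1}$ is a finitely generated abelian group. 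Controlling how these $E^2$-pages and their differentials vary with $n$ --- that is, managing the interaction of the spectral sequences with finite generation over $\OI$ uniformly in $n$ --- is the crux of the matter.
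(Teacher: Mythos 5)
Your deduction is exactly the paper's: Theorem~\ref{mainthm1} is obtained by applying Theorem~\ref{mainthm2} to the constant coefficient system $M_n=\bk$ (which is the finitely generated constant $\OVI(R)$-module, with your $\OI$-group structure on $n\mapsto U_n(R)$ agreeing with the one induced by the functor $\OI\to\OVI(R)$) and then invoking eventual polynomiality of $\dim_\bk V_n$ for finitely generated $\OI$-modules over a field (Proposition~\ref{prop:oidim}). Your closing sketch of the real work --- the extension $1\to R^{n-1}\to U_n(R)\to U_{n-1}(R)\to 1$, Hochschild--Serre, and a noetherianity input --- also matches the paper's actual strategy for Theorem~\ref{mainthm2} in outline.
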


For the ring $R$ in the theorem, one could take a finite field, or a number ring, or the ring of $2 \times 2$ matrices over one of these rings, for example.


\begin{example}
The case $R=\bZ$ and $\bk = \bQ$ of Theorem \ref{mainthm1} follows from work of Dwyer \cite[Theorem 1.1]{dwyer}.  He shows that the dimension of $\rH_i(U_n(\bZ),\bQ)$ is the number of permutations in $S_n$ with length $i$, where the length of a permutation $\sigma$ is the number of pairs $i<j$ such that $\sigma(i) > \sigma(j)$. Denote this number by $I(i,n)$. We claim that $n \mapsto I(i,n)$ is a polynomial of degree $i$ for $n \gg 0$. As an aside, this shows that the degree of the polynomials $f_i(t)$ in Theorem \ref{mainthm1} cannot be bounded as we let $i$ vary. We prove the claim by induction. For $i=1$, we have $I(1,n) = n-1$ for $n > 0$. In general, we have the identity 
\[
\sum_{i \ge 0} I(i,n) q^i = (1+q)(1+q+q^2) \cdots (1+q+q^2 + \cdots + q^{n-1});
\]
see \cite[Corollary 1.3.13]{stanleyEC1}.  It follows that $I(i,n) - I(i,n-1) = \sum_{j=0}^{i-1} I(j, n-1)$ for $n>i$. By induction, the right hand side is a polynomial of degree $i-1$ for $n \gg 0$. Hence $I(i,n)$ is a polynomial of degree $i$ for $n \gg 0$, as claimed.
\end{example}

\subsection{Main results}

Our main result is a refined version of Theorem~\ref{mainthm1} where we allow systems of non-trivial coefficients and give a stronger conclusion. This additional generality is interesting in its own right, but is required even if one is ultimately only interested in the case of trivial coefficients. Indeed, our general approach essentially relates the $i$th homology group of some system of coefficients to lower homology groups of some auxiliary systems, and the auxiliary systems can be non-trivial even if the initial system is trivial.

To formulate this general theorem, we must make sense of a ``system'' of representations of $U_n(R)$. For this, we introduce the category $\OVI(R)$. An object of $\OVI(R)$ is a finite rank free $R$-module equipped with a totally ordered basis. A morphism of $\OVI(R)$ is a map of $R$-modules that is upper-triangular with respect to the distinguished ordered bases (see \S \ref{ss:defovi}). An {\bf $\OVI(R)$-module} over a commutative ring $\bk$ is a functor $\OVI(R) \to \Mod_{\bk}$. Every object in $\OVI(R)$ is isomorphic to $R^n$ equipped with its standard basis for some $n$, and the automorphism group of this object is the group $U_n(R)$. Thus an $\OVI(R)$-module $M$ gives rise to a sequence $\{M_n\}_{n \ge 0}$, where $M_n=M(R^n)$ is a representation of $U_n(R)$, and therefore provides a reasonable notion of a system of $U_n(R)$ representations. We are primarily interested in \emph{finitely generated} $\OVI(R)$-modules (see \S \ref{section:catgen} for the definition): indeed, it is only reasonable to expect uniform behavior of the homology in this case.

\begin{example}
(a) We have a constant $\OVI(R)$-module given by $R^n \mapsto \bk$ for all $n$. Thus the sequence of trivial representations of $U_n(R)$ forms a ``system'' in our sense. (b) Suppose $R=\bk$. We then have an $\OVI(R)$-module given by $R^n \mapsto R^n$. We thus see that, in this case, the sequence of standard representations of $U_n(R)$ forms a ``system.'' Both examples are finitely generated.
\end{example}

Let $M$ be an $\OVI(R)$-module, and fix $i \ge 0$. For each $n$, we consider the homology group $\rH_i(U_n(R), M_n)$. The various $M_n$'s are related by the $\OVI(R)$-module structure, and this should lead to relationships between these homology groups. We now examine this. Letting $[n]$ denote the ordered set $\{1,\ldots,n\}$, if $[n] \to [m]$ is an order-preserving injection of finite sets then there is an associated morphism $R^n \to R^m$ in $\OVI(R)$. This gives a map $M_n \to M_m$, which induces a map $\rH_i(U_n(R), M_n) \to \rH_i(U_m(R), M_m)$. This suggests that $[n] \mapsto \rH_i(U_n(R), M_n)$ defines an $\OI$-module, where $\OI$ is the category whose objects are finite totally ordered sets and whose morphisms are order-preserving injections. We show that this is indeed the case, and denote this $\OI$-module by $\bH_i(\bU, M)$. (We note that $\OI$-modules are close relatives of the well-known $\FI$-modules introduced by Church, Ellenberg, and Farb \cite{fimodule}.)

We can now state our main theorem:

\begin{theorem} \label{mainthm2}
Let $R$ be a ring whose additive group is finitely generated, let $\bk$ be a noetherian commutative ring, and $M$ be a finitely generated $\OVI(R)$-module over $\bk$. Then $\bH_i(\bU, M)$ is a finitely generated $\OI$-module over $\bk$ for all $i \geq 0$.
\end{theorem}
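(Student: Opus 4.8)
The plan is to induct on the homological degree $i$, using the Lyndon--Hochschild--Serre spectral sequences that come from the nilpotence of $U_n(R)$. Throughout I would use, as an input, that the categories of $\OI$-modules and of $\OVI(R)$-modules over a noetherian ring are locally noetherian, so that submodules, quotients and extensions of finitely generated modules are again finitely generated, and a module that is generated in bounded degrees and takes finitely generated values over $\bk$ is finitely generated. I also use that $U_n(R)$ is a finitely generated group, since it is generated by the elementary matrices $1+aE_{j,j+1}$ with $a$ running over a finite generating set of the additive group of $R$; hence each $\rH_i(U_n(R),M_n)$ is a finitely generated $\bk$-module. For the base case $i=0$, the $\OI$-module $\bH_0(\bU,M)=[n\mapsto (M_n)_{U_n(R)}]$ is the pushforward of $M$ along the forgetful functor $\OVI(R)\to\OI$ (it sends $\bk[\OVI(R)(R^m,-)]$ to the free $\OI$-module $\bk[\OI([m],-)]$), so it is finitely generated; and since $M\mapsto\bH_\bullet(\bU,M)$ is a homological $\delta$-functor, every short exact sequence of $\OVI(R)$-modules yields a long exact sequence of $\OI$-modules. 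By dimension shifting this lets me reduce, using local noetherianity, to the case where $M$ is a representable (free) $\OVI(R)$-module $\bk[\OVI(R)(R^m,-)]$, and more generally makes it convenient to allow $M$ induced from the underlying groupoid.

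The main engine is the decomposition $U_{n+1}(R)=V_n\rtimes U_n(R)$, where $V_n$ is the additive group of the last column (so $V_n\cong R^n$), acted on by $U_n(R)$ through the standard representation, with the splitting given by the upper-left-block inclusion; this decomposition is compatible with the $\OVI(R)$- and $\OI$-structures. The associated spectral sequences assemble into a spectral sequence of $\OI$-modules
\[
\rE^2_{p,q}=\bH_p\!\left(\bU,\ [n\mapsto\rH_q(V_n,M_{n+1})]\right)\ \Longrightarrow\ \left[n\mapsto\rH_{p+q}(U_{n+1}(R),M_{n+1})\right].
\]
Because the additive group of $R$ is finitely generated, $V_n$ is $\bZ^{dn}$ together with a fixed torsion group, so by the K\"unneth theorem $[n\mapsto\rH_q(V_n,M_{n+1})]$ is built from exterior powers of a base change of the standard $\OVI(R)$-module, tensored with finitely many fixed $\bk$-modules; one checks this is a finitely generated $\OVI(R)$-module. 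Consequently, for $p<i$ the term $\rE^2_{p,q}$ is finitely generated by the inductive hypothesis, and the abutment (which is the shift of $\bH_i(\bU,M)$) is finitely generated provided the single remaining term $\rE^2_{i,0}$ is.

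Controlling $\rE^2_{i,0}$ is the hard part. This term is $[n\mapsto\rH_i(U_n(R),(M_{n+1})_{V_n})]$, i.e.\ $\bH_i(\bU,M')$ for the ``shifted $V$-coinvariants'' module $M'$, so it is again an $i$-th homology group and the induction does not formally close. To handle it I would exploit the splitting: the section $U_n(R)\hookrightarrow U_{n+1}(R)$ realizes the natural map from $\bH_i(\bU,M)$ to its shift as a section of the edge homomorphism onto $\rE^\infty_{i,0}$, which pins $\rE^\infty_{i,0}$ down. One then analyzes $M'$: when $M=\bk[\OVI(R)(R^m,-)]$ the $V_n$-orbits on $\OVI(R)(R^m,R^{n+1})$ split according to whether the underlying injection hits the top element, yielding a direct-sum decomposition $M'\cong \bk[\OVI(R)(R^m,-)]\oplus \bk[\OVI(R)(R^{m-1},-)]$; and in the induced case the stabilizer subgroups that enter through Shapiro's lemma can be arranged (choosing the order-preserving injection to use the last coordinates) to be isomorphic to groups $U_{n'}(R)$ or to $R^a\rtimes U_{n'}(R)$, so that the same machinery re-enters on smaller groups. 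Together with the elementary fact that an $\OI$-module over a noetherian ring is finitely generated as soon as its shift is (a shift generated in degrees $\le D$ forces the module to be generated in degrees $\le D+1$), this reduces $\rE^2_{i,0}$, via a secondary induction on the generation degree, to data already under control.

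The step I expect to be the main obstacle is exactly this analysis of the top-row term $\rE^2_{i,0}$ — both the delicate point that it re-expresses $\bH_i$ of the same degree (so that the two inductions, on homological degree and on generation degree, must be organized so as to actually terminate) and the work needed to make all the spectral sequences and stabilization maps genuinely functorial over $\OI$. Everything else (the reductions, the finiteness of the coefficient systems $[n\mapsto\rH_q(V_n,M_{n+1})]$, and the passage from finitely generated $\rE^\infty$-terms to a finitely generated abutment) is routine given local noetherianity.
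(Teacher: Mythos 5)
Your proposal is essentially the paper's proof: the same double induction (on the homological degree $i$ and, within that, on the index of the principal projective), the same semidirect-product decomposition $U_{n+1}(R)=R^n\rtimes U_n(R)$ feeding a Hochschild--Serre spectral sequence of $\OI$-modules, the same use of the splitting to control the bottom row $\rE^2_{i,0}$, and the same ``reduced shift finitely generated $\Rightarrow$ module finitely generated'' conclusion combined with local noetherianity of $\OVI(R)$ for the dimension shift; the paper merely packages your orbit computation $\bigl((P_m)_{n+1}\bigr)_{V_n}\cong (P_m)_n\oplus(P_{m-1})_n$ in the language of the auxiliary categories $\OVI(R,d)$ and left Kan extensions. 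Two small repairs to your write-up: finite generation of $U_n(R)$ as a group does not by itself bound $\rH_i$ for $i\ge 2$ (you need that these groups are polycyclic-by-finite, hence of type $\rF\rP$); and for the coefficient systems $[n\mapsto\rH_q(V_n,(P_m)_{n+1})]$ with $q\ge 1$ you should first split off the $V_n$-orbits of injections hitting the top basis vector (which are free, hence contribute nothing in positive degree), after which the remaining summand is $V_n$-trivial and your K\"unneth/type-$\rF\rP$ argument applies.
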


\noindent
Theorem~\ref{mainthm1} follows immediately from this theorem by taking $M_n$ to be the trivial representation of $U_n(R)$ for all $n$ and appealing to the fact that a finitely generated $\OI$-module over a field has eventually polynomial dimension (see Proposition~\ref{prop:oidim} below).

\subsection{The noetherian result}

As stated, to prove Theorem~\ref{mainthm2} we relate the homology of the $\OVI(R)$-module $M$ to the homology of certain auxiliary coefficient systems constructed by various means. To ensure that these auxiliary systems are finitely generated, we require the following noetherian result, which is the primary technical result of this paper:

\begin{theorem} \label{mainthm3}
Let $R$ be a ring whose additive group is finitely generated and let $\bk$ be a noetherian commutative ring. Then the category of $\OVI(R)$-modules over $\bk$ is locally noetherian, that is, any submodule of a finitely generated module is finitely generated.
\end{theorem}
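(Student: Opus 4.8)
The plan is to reduce to the principal projective modules and then to disentangle a combinatorial layer (governed by $\OI$) from an arithmetic layer (governed by the group algebra $\bk[R^+]$ of the additive group of $R$). The category of $\OVI(R)$-modules over $\bk$ is a Grothendieck abelian category generated by the representable functors $P_m:=\bk[\Hom_{\OVI(R)}(R^m,-)]$, and the class of noetherian objects is closed under subobjects, quotients, and extensions; since every finitely generated module is a quotient of a finite direct sum of $P_m$'s, it suffices to show that each $P_m$ is a noetherian object, i.e.\ that every submodule of $P_m$ is finitely generated.

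Concretely, a morphism $R^m\to R^n$ in $\OVI(R)$ consists of an order-preserving injection $f\colon[m]\hookrightarrow[n]$ recording the pivot positions, together with, for each $j$, a tuple of $f(j)-1$ ``off-pivot'' entries of $R$; this defines a functor $\Pi\colon\OVI(R)\to\OI$. Since morphisms of $\OVI(R)$ are injective $R$-linear maps, post-composition is injective on morphism sets (no ``collapsing''), and a short computation — solving a lower-triangular linear system over $R$ for the off-pivot entries — shows that $U_n(R)\backslash\Hom_{\OVI(R)}(R^m,R^n)=\Hom_{\OI}([m],[n])$, and more precisely that the divisibility preorder on $\coprod_n\Hom_{\OVI(R)}(R^m,R^n)$ is pulled back along $\Pi$ from the divisibility preorder on $\coprod_n\Hom_{\OI}([m],[n])$, which is well-quasi-ordered and carries a compatible admissible order because $\OI$ is a Gr\"obner category. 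Consequently every monomial submodule of $P_m$ (one spanned by a set of morphisms closed under post-composition) is finitely generated, and, because the $\OI$-type only increases under the category action, one may take leading terms in the $\OI$-direction.

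On the arithmetic side, the fibre of $\Pi$ over $f$ is a free $R$-module of rank $d(f):=\sum_j(f(j)-1)$, so applying $\bk[-]$ turns it into a free rank-one module over $\bk[R^+]^{\otimes d(f)}$; here the hypotheses are essential, since $R^+$ is a finitely generated abelian group, so $\bk[R^+]$ is a finitely generated commutative $\bk$-algebra and hence noetherian by the Hilbert basis theorem. To combine the two layers, one takes, for a submodule $N\subseteq P_m$, its leading-term module $\ini_{\OI}(N)$ with respect to the $\OI$-order; this is a submodule of the $\OI$-graded version of $P_m$, whose level-$n$ value is $\bigoplus_f\bk[R^+]^{\otimes d(f)}$ with $\OI$-morphisms acting by shifting types $f\mapsto gf$. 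One would like to recognize this graded object as a finitely generated module over a Gr\"obner category — essentially $\OI$, or an auxiliary category recording the shape of the off-pivot support — valued in noetherian coefficients built from $\bk[R^+]$, and then conclude by noetherianity of modules over a Gr\"obner category with noetherian coefficients, a division-algorithm argument transferring finite generation back from $\ini_{\OI}(N)$ to $N$.

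The hard part is this last identification, because the $\OI$-combinatorics and the $\bk[R^+]$-coefficients do not decouple: post-composition with an $\OI$-morphism acts on the tuple of off-pivot entries by an \emph{affine} transformation — a linear ``spreading'' plus a translation depending on the morphism — and $R^+$ admits no translation-compatible well-ordering, so one cannot iterate the leading-term argument into the $R$-direction; each fibre must be controlled at once via noetherianity of $\bk[R^+]$. The $\FI$-module toolkit is also unavailable: the ranks $d(f)$ vary, and the shift functor does not preserve finite generation here — if $S$ denotes the shift, $\coker(P_1\to SP_1)$ is isomorphic to the non-finitely-generated module $[n]\mapsto\bk[R^n]$ — so there is no Church--Ellenberg--Farb-style induction. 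I expect the way around this to be to filter $\OVI(R)$ by the subcategories $\OVI(R)^{(d)}$ of morphisms all of whose off-pivot entries lie at distance $\ge d$ below the pivot — these satisfy $\OVI(R)^{(d)}\circ\OVI(R)^{(d')}\subseteq\OVI(R)^{(\min(d,d'))}$ and intersect in $\OI$ — thereby stripping off one ``band'' of the unipotent groups at a time; on the associated graded of a band the affine action should linearize in the new entries, turning that band of off-pivot data into honest $\bk[R^+]$-module coefficients and reducing, inductively, to the noetherianity of $\OI$-modules over $\bk$. Making this band-by-band descent interact correctly with the $\OI$-leading-term step — so that all bands are controlled simultaneously by a single finitely generated object over the combinatorial skeleton with noetherian coefficients — is the technical core.
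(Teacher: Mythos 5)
Your high-level plan is essentially the strategy the paper follows: reduce to the principal projectives, homogenize over the $\OI$-direction, then peel off the $R$-valued off-pivot entries ``band by band'' and finish with a well-partial-order/noetherian-coefficients argument. Your diagnosis of the central obstruction is also exactly right (the affine action of post-composition on off-pivot entries, the absence of a translation-compatible well-ordering on $R^+$, and the failure of the shift functor to preserve finite generation, so that no CEF-style induction is available). But the proposal stops precisely where the proof begins: you write that making the band-by-band descent interact with the $\OI$-leading-term step ``is the technical core,'' and you only \emph{expect} that the associated graded of your filtration linearizes the affine action. That expectation is the theorem; it occupies the longest section of the paper, and nothing in your write-up substitutes for it.

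Concretely, two things are missing. First, the mechanism that makes initial terms compatible with the unipotent action is not an associated-graded construction at all: as the paper's $U_3(\bZ)$ toy example shows, if you insist on nonnegative exponents everywhere then the leading-term module of an invariant submodule need not be invariant (equivalently, $\bk[U_3(\bZ_{\ge 0})]$ is not noetherian), so a naive ``linearization on the graded pieces'' fails. The actual device is to allow \emph{arbitrary} (including negative) exponents in the bands already processed, so that the elementary operators $E^r_{i,\alpha_j}$ act invertibly there; one then alternates between intersecting with the sub-object where the next band is nonnegative and taking initial terms in that band, checking at each stage that a long list of closure and homogeneity conditions (the posets $\fM^{(k)}$ and $\fN^{(k)}$) is preserved and that each map is conservative. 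Second, your endgame ``noetherianity of modules over a Gr\"obner category with noetherian coefficients'' is not an off-the-shelf statement that applies here: what is actually needed is a well partial ordering on the set of $n\times d$ matrices with entries in a positive cone $R_{\ge 0}\cong(\bZ_{\ge 0})^\lambda$ under the $\OI(d)$-action (Higman's lemma plus Dickson's lemma), combined with an ascending-chain argument on ideals of $\bk$; and to have such a cone at all one must first reduce to the case where the additive group of $R$ is free, which your proposal never addresses (the paper does this by lifting $R$ to a ring with torsion-free additive group via a fiber product with a monoid ring). As it stands the proposal is a correct roadmap with the destination marked but the road unbuilt.
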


\noindent
Theorem~\ref{mainthm3} differs from much previous work on categories of $R$-modules in the setting of representation stability (such as \cite{putman-sam} and \cite{catgb}) in that it allows the ring $R$ to be infinite. In the previous work, the automorphism groups in the categories under consideration were $\GL_n(R)$, and finiteness of $R$ is necessary since the group algebra of $\GL_n(R)$ is not noetherian if $R$ is infinite. In our situation, the automorphism groups are $U_n(R)$. When the additive group of $R$ is finitely generated, these groups are virtually polycyclic, and a classical result of Philip Hall \cite{hall} says that group rings of virtually polycyclic groups are noetherian.  Our proof of Theorem \ref{mainthm3} is inspired in part by Hall's proof of this fact.

\begin{remark}
It is easy to see that Theorem~\ref{mainthm3} is false if the additive group of $R$ is not finitely generated (see \S \ref{section:converse}).
\end{remark}

\begin{remark}
When the ring $R$ is finite, we in fact show that the category of $\OVI(R)$-modules is quasi-Gr\"obner in the sense of \cite[\S 4]{catgb}, which implies local noetherianity (but is stronger). In the general case, we do not show that the category of $\OVI(R)$-modules is quasi-Gr\"obner (and expect that it is not), and the proof of local noetherianity is far more difficult.
\end{remark}

%

\subsection{Application to Iwahori groups}

Let $\cO$ be a number ring and let $\bk$ be a commutative noetherian ring.  A classical result of van der Kallen \cite{vanderkallen} says that the homology of the group $\GL_n(\cO)$ stabilizes: for any fixed $i$ the canonical map
\[
\rH_i(\GL_n(\cO),\bk) \rightarrow \rH_i(\GL_{n+1}(\cO),\bk)
\]
is an isomorphism for $n \gg 0$.  In particular, if $\bk$ is a field then the dimension of $\rH_i(\GL_n(\cO),\bk)$ is eventually constant.

Now let $\fa$ be a nonzero proper ideal in $\cO$ and let $\GL_n(\cO,\fa)$ be the principal congruence subgroup of level $\fa$, i.e.,\ the subgroup of $\GL_n(\cO)$ consisting of matrices that are congruent to the identity modulo $\fa$.  The homology of these groups does not stabilize; for instance, for $\ell \geq 2$ and $n \geq 3$ the abelianization of $\GL_n(\bZ,\ell\bZ)$ is $(\bZ/\ell)^{n^2-1}$ (see \cite{LeeSzczarba}). Building on work of the first author \cite{putman}, Church--Ellenberg--Farb--Nagpal \cite{CEFN} proved instead that the homology of $\GL_n(\cO,\fa)$ satisfies a version of representation stability: the rule $[n] \mapsto \rH_i(\GL_n(\cO,\fa), \bk)$ defines a finitely generated $\FI$-module. Consequently, when $\bk$ is a field, the dimension is eventually polynomial.

The Iwahori subgroup $\GL_{n,0}(\cO,\fa)$ is the subgroup of $\GL_n(\cO)$ consisting of matrices that are upper-triangular modulo $\fa$. Using Theorem~\ref{mainthm2}, we prove an analog of Church--Ellenberg--Farb--Nagpal's result for $\GL_{n,0}(\cO,\fa)$:

\begin{theorem}
\label{maintheorem:iwahori}
Let $\cO$ be a number ring, let $\fa \subset \cO$ be a nonzero proper ideal, and let $\bk$ be a commutative noetherian ring.  Then the following hold for all $i \geq 0$.
\begin{compactitem}
\item The rule $[n] \mapsto \rH_i(\GL_{n,0}(\cO,\fa), \bk)$ defines a finitely generated $\OI$-module over $\bk$. 
\item If $\bk$ is a field then there is a polynomial $f \in \bQ[t]$ such that $\dim \rH_i(\GL_{n,0}(\cO,\fa), \bk)=f(n)$ for $n \gg 0$.
\end{compactitem}
\end{theorem}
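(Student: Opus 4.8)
The plan is to reduce the statement about Iwahori groups to Theorem~\ref{mainthm2} by realizing the groups $\GL_{n,0}(\cO,\fa)$ as the automorphism groups in a category built from $\OVI$ of a finite quotient ring. First I would observe that $\GL_{n,0}(\cO,\fa)$ fits into a short exact sequence
\[
1 \to \GL_n(\cO,\fa) \to \GL_{n,0}(\cO,\fa) \to U_n(\cO/\fa) \to 1,
\]
since reduction modulo $\fa$ sends an upper-triangular-mod-$\fa$ matrix to an upper-unitriangular matrix over the finite ring $\cO/\fa$ (the diagonal entries are units congruent to $1$, hence are $1$ in $\cO/\fa$ after... more precisely one restricts to the subgroup where they are literally $1$; one checks the Iwahori subgroup as defined surjects onto $U_n(\cO/\fa)$ with kernel the principal congruence subgroup). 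The second key input is the Church--Ellenberg--Farb--Nagpal theorem: $[n] \mapsto \rH_j(\GL_n(\cO,\fa),\bk)$ is a finitely generated $\FI$-module for every $j$. Restricting along the natural functor $\OI \to \FI$ (forgetting that injections are order-preserving is the wrong direction; rather $\FI$-modules pull back to $\OI$-modules along $\OI \to \FI$), a finitely generated $\FI$-module becomes a finitely generated $\OI$-module, so these homology groups assemble into finitely generated $\OI$-modules as well.

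The heart of the argument is to run the Lyndon--Hochschild--Serre spectral sequence of the above extension, uniformly in $n$. For fixed $p,q$ the $\rE^2$ page is
\[
\rE^2_{p,q}(n) = \rH_p\bigl(U_n(\cO/\fa),\ \rH_q(\GL_n(\cO,\fa),\bk)\bigr).
\]
I would argue that $[n] \mapsto \rH_q(\GL_n(\cO,\fa),\bk)$ is not merely a finitely generated $\OI$-module but underlies a finitely generated $\OVI(\cO/\fa)$-module: the conjugation action of $\GL_{n,0}(\cO,\fa)$ on the homology of its normal subgroup factors through $U_n(\cO/\fa)$, giving each $\rH_q(\GL_n(\cO,\fa),\bk)$ the structure of a $U_n(\cO/\fa)$-representation, and the $\FI$- (hence $\OI$-) module maps are compatible with these actions, which should be enough to promote the system to an $\OVI(\cO/\fa)$-module; finite generation over $\OVI(\cO/\fa)$ follows because it is finitely generated already as an $\OI$-module and $\OI \to \OVI(\cO/\fa)$ is essentially surjective with the relevant finiteness. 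Then Theorem~\ref{mainthm2}, applied to the finite ring $R = \cO/\fa$ (whose additive group is certainly finitely generated) and the $\OVI(\cO/\fa)$-module $M_n = \rH_q(\GL_n(\cO,\fa),\bk)$, shows that $[n]\mapsto \rE^2_{p,q}(n) = \rH_p(U_n(\cO/\fa),M_n) = \bH_p(\bU,M)_n$ is a finitely generated $\OI$-module. Since $\bk$ is noetherian and these $\OI$-modules form a finitely generated abelian category (Theorem~\ref{mainthm3}, or rather the known noetherianity of $\OI$ over a noetherian ring), subquotients and the differentials preserve finite generation, so every subsequent page $\rE^r_{p,q}$, and in particular $\rE^\infty_{p,q}$, is a finitely generated $\OI$-module. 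Finally $\rH_i(\GL_{n,0}(\cO,\fa),\bk)$ has a finite filtration (bounded length, since only $p+q=i$ contributes) whose graded pieces are the $\rE^\infty_{p,q}(n)$, so it too is a finitely generated $\OI$-module, giving the first bullet; the second bullet then follows from Proposition~\ref{prop:oidim}.

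The main obstacle I anticipate is the functoriality of the Lyndon--Hochschild--Serre spectral sequence \emph{as $n$ varies}, together with making precise the claim that the coefficient systems $\rH_q(\GL_n(\cO,\fa),\bk)$ form an $\OVI(\cO/\fa)$-module rather than merely a compatible family of $U_n(\cO/\fa)$-representations. This requires checking that the order-preserving injection $[n]\hookrightarrow[m]$ induces a map of group extensions (not just of the subgroups), so that one gets a genuine morphism of spectral sequences; and it requires that the $\OVI(\cO/\fa)$-module axioms—i.e., the action of all upper-triangular, not just invertible, maps—are satisfied, which should come from the fact that a non-invertible upper-triangular map over $\cO/\fa$ lifts (non-canonically, but the induced map on homology is canonical) to an appropriate map of congruence subgroups. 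A secondary point to be careful about is whether Theorem~\ref{mainthm2} as stated suffices, or whether one needs the slightly stronger statement that $\bH_p(\bU,-)$ is functorial in the $\OVI(R)$-module argument; I expect the proof of Theorem~\ref{mainthm2} gives this functoriality for free, but it must be invoked to handle the differentials in the spectral sequence.
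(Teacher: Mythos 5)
Your overall strategy---CEFN finite generation for $\rH_q(\GL_n(\cO,\fa),\bk)$, pullback along $\OI\to\FI$, promotion to an $\OVI(\cO/\fa)$-module, and the Hochschild--Serre spectral sequence of the quotient by the principal congruence subgroup, closed off by noetherianity of $\OI$-modules---is exactly the paper's. But there is a genuine error at the first step: the quotient of $\GL_{n,0}(\cO,\fa)$ by $\GL_n(\cO,\fa)$ is \emph{not} $U_n(\cO/\fa)$. An element of the Iwahori subgroup reduces mod $\fa$ to an upper-triangular matrix whose diagonal entries are arbitrary units of $\cO/\fa$ lying in the image $C$ of $\cO^\times$, not $1$; for instance with $\cO=\bZ$ and $\fa=5\bZ$ the diagonal entries reduce to $\pm 1\in\bF_5$. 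The correct quotient (identified via strong approximation, which gives surjectivity of $\SL_n(\cO)\to\SL_n(\cO/\fa)$ and hence of $\GL_{n,0}(\cO,\fa)\to B_n^C(\cO/\fa)$) is the group $B_n^C(R)$ of upper-triangular matrices over $R=\cO/\fa$ with determinant in $C$. Your parenthetical attempt to "restrict to the subgroup where they are literally $1$" changes the group being studied and does not prove the theorem as stated.

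Consequently Theorem~\ref{mainthm2} cannot be applied directly: the $\rE^2$ page involves $\rH_p(B_n^C(R),M(j)_n)$ rather than $\rH_p(U_n(R),M(j)_n)$, and one must handle the extra diagonal torus. This is the content of Theorem~\ref{theorem:bvifg} in the paper, which runs a second Hochschild--Serre spectral sequence for $1\to U_n(R)\to B_n^C(R)\to (R^\times)^n_C\to 1$ and combines Theorem~\ref{mainthm2} with Proposition~\ref{prop:oigphom} (homology of the $\OI$-group $[n]\mapsto A^n$ for $A$ of type $\rF\rP$, together with its finite-index variant); it also requires Lemma~\ref{lem:Rtimes-fg} that $R^\times$ is finitely generated. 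Your remaining steps (the $\OVI(R)$-module structure on the coefficient system, functoriality of the spectral sequence in $n$, and closing under subquotients via local noetherianity of $\OI$) are sound and match the paper, but the argument is incomplete without the non-unipotent layer.
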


\subsection{Outline}

In \S \ref{s:cat} we review generalities on modules over categories. In \S \ref{s:oi} we introduce the category $\OI$ and its variants $\OI(d)$ and establish basic results about them. In \S \ref{s:ovi} we introduce the category $\OVI(R)$ and its variants $\OVI(R,d)$ and establish basic results about them. In \S \ref{s:noeth}, we prove the main noetherianity result for $\OVI(R)$ (Theorem~\ref{mainthm3}). In \S \ref{s:hovi} we prove the main result of the paper (Theorem~\ref{mainthm2}). Finally, in \S \ref{s:iwahori} we prove Theorem \ref{maintheorem:iwahori}.

\subsection{Notation}

Throughout, $\bk$ denotes a commutative ring, typically noetherian.  
Unless otherwise specified, $1 \neq 0$ in all of our rings.
For a fixed category $\cC$, we write $\ul{\bk}$ for the constant functor $\cC \to \Mod_{\bk}$ taking everything to $\bk$ and all morphisms to the identity. We let $B_n \subset \GL_n$ be the group of upper-triangular matrices, and $U_n \subset B_n$ the subgroup where the diagonal entries are equal to~1. We use $R$ to denote the ring appearing in the definition of $\OVI(R)$, and that is typically plugged in to $U_n$ or $B_n$. We generally do not require it to be commutative.
We set $[0] = \emptyset$, and if $n$ is a positive integer, then $[n]$ denotes the set $\{1,\dots,n\}$.

\subsection*{Acknowledgments}

We thank Benjamin Steinberg for pointing out a significant simplification to the proof of 
Lemma \ref{lemma:surjectfree}.

\section{Representations of categories}
\label{s:cat}

\subsection{Generalities}
\label{section:catgen}

Let $\cC$ be a category and let $\bk$ be a noetherian commutative ring. A {\bf $\cC$-module} over $\bk$ is a functor $M\colon \cC \to \Mod_{\bk}$.  For an object $x \in \cC$, we denote by $M_x$ the image of $x$ under $M$.  Denote the category of $\cC$-modules by $\Rep_\bk(\cC)$. It is an abelian category. For each $x \in \cC$, we define a $\cC$-module $P_x$ via the formula $(P_x)_y = \bk[\hom(x,y)]$. One easily sees that for any $\cC$-module $M$ one has a natural identification $\Hom(P_x, M)=M_x$. It follows that $P_x$ is a projective $\cC$-module; we call it the {\bf principal projective} at $x$. A general $\cC$-module $M$ is finitely generated if and only if there exists a surjection $\bigoplus_{i=1}^k P_{x_i} \to M$ for some $x_1,\ldots,x_k \in \cC$. A $\cC$-module is said to be {\bf noetherian} if all of its submodules are finitely generated, and the category $\Rep_{\bk}(\cC)$ is said to be {\bf locally noetherian} if all finitely generated objects are noetherian.

If $\Phi \colon \cC \to \cD$ is a functor and $M$ is a $\cD$-module then the {\bf pullback} of $M$ along $\Phi$, denoted $\Phi^*(M)$, is the $\cC$-module defined via the formula $\Phi^*(M) = M \circ \Phi$, so that $\Phi^*(M)_x = M_{\Phi(x)}$.  We now review how the pullback operation interacts with finite generation. The following definition is \cite[Def.~3.2.1]{catgb}:

\begin{definition} \label{def:propF}
We say that a functor $\Phi \colon \cC \to \cD$ satisfies {\bf property (F)} if the following condition holds for all $y \in \cD$.  There exist finitely many objects $x_1, \ldots, x_n \in \cC$ together with morphisms $f_i \colon y \to \Phi(x_i)$ in $\cD$ with the following property: for any $x \in \cC$ and any morphism $f \colon y \to \Phi(x)$ in $\cD$, there exists an $i$, and a morphism $g \colon x_i \to x$ in $\cC$, such that $f = \Phi(g) \circ f_i$.
\end{definition}

\begin{definition}
A category $\cC$ satisfies property~(F) if the diagonal $\cC \to \cC \times \cC$ satisfies property~(F).
\end{definition}

The importance of these definitions is due to the following results.

\begin{proposition} \label{prop:fpullback}
A functor $\Phi \colon \cC \to \cD$ satisfies property~{\rm (F)} if and only if $\Phi^*(M)$ is a finitely generated $\cC$-module for all finitely generated $\cD$-modules $M$.
\end{proposition}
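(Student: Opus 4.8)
The plan is to reduce the statement to the case of principal projectives, and then to recognize property~(F) as precisely the combinatorial translation of the generation condition for $\Phi^*(P_y)$.

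First I would record two elementary facts about the pullback functor $\Phi^* \colon \Rep_\bk(\cD) \to \Rep_\bk(\cC)$: it is exact and it commutes with direct sums. Both are immediate from the definition $\Phi^*(M) = M \circ \Phi$, since kernels, cokernels, and direct sums of $\cC$-modules are computed objectwise. Combined with the characterization of finite generation recalled above (a module is finitely generated iff it admits a surjection from a finite direct sum of principal projectives), this yields: $\Phi^*(M)$ is finitely generated for every finitely generated $\cD$-module $M$ if and only if $\Phi^*(P_y)$ is finitely generated for every $y \in \cD$. The ``only if'' is clear since each $P_y$ is finitely generated; for the ``if'', choose a surjection $\bigoplus_{j=1}^{k} P_{y_j} \surjects M$ and apply $\Phi^*$ to obtain a surjection of the finitely generated module $\bigoplus_j \Phi^*(P_{y_j})$ onto $\Phi^*(M)$.

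It remains to characterize when $\Phi^*(P_y)$ is finitely generated, and this is where property~(F) enters. Unwinding definitions, $(\Phi^*(P_y))_x = \bk[\hom_\cD(y,\Phi(x))]$, and for a morphism $g \colon x \to x'$ in $\cC$ the transition map sends a basis vector $[\alpha]$ to $[\Phi(g) \circ \alpha]$. Consequently, given objects $x_1,\dots,x_n \in \cC$ and morphisms $f_i \colon y \to \Phi(x_i)$ in $\cD$, the $\cC$-submodule of $\Phi^*(P_y)$ generated by the basis vectors $[f_i] \in (\Phi^*(P_y))_{x_i}$ equals, in each degree $x$, the $\bk$-span of $\{\, [\Phi(g)\circ f_i] : 1 \le i \le n,\ g \in \hom_\cC(x_i,x) \,\}$. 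Since $\bk[\hom_\cD(y,\Phi(x))]$ is $\bk$-free on the morphisms $y \to \Phi(x)$, this span is all of $\bk[\hom_\cD(y,\Phi(x))]$ for every $x$ exactly when every $f \colon y \to \Phi(x)$ is literally of the form $\Phi(g)\circ f_i$ for some $i$ and some $g \colon x_i \to x$ — which is precisely the defining condition of property~(F) for $\Phi$ at $y$. For the converse direction (deducing property~(F) from finite generation) one observes that any finite generating set of $\Phi^*(P_y)$ may be replaced by one consisting of basis vectors: each generator is a finite $\bk$-linear combination of basis vectors $[f_i]$, and the collection of all these $[f_i]$ still generates. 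Assembling the two reductions gives the proposition.

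The argument is mostly bookkeeping; the only places that call for genuine care are keeping the variance of $P_y$ straight, so that the factorization witnessing property~(F) comes out as $\Phi(g)\circ f_i$ with the composite in the correct order, and the final step of trading an arbitrary finite generating set of $\Phi^*(P_y)$ for one consisting of single basis vectors.
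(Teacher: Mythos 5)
Your proof is correct and is essentially the argument the paper outsources to the citation of \cite[Prop.~3.2.3]{catgb}: reduce to principal projectives via exactness of $\Phi^*$, then observe that property~(F) at $y$ is exactly the statement that the basis vectors $[f_i]$ generate $\Phi^*(P_y)$ (using the paper's standing convention $1 \neq 0$ in $\bk$ so that a proper subset of a basis spans a proper submodule). Nothing further is needed.
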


\begin{proof}
See \cite[Prop.~3.2.3]{catgb}.
\end{proof}

Recall that a functor $\Phi\colon \cC \to \cD$ is {\bf essentially surjective} if for all $y \in \cD$, there exists some $x \in \cC$ such that $\Phi(x)$ is isomorphic to $y$.

\begin{proposition} \label{prop:noeth}
Let $\cC$ be a category such that $\Rep_{\bk}(\cC)$ is locally noetherian and let $\Phi \colon \cC \to \cD$ be an essentially surjective functor satisfying property~{\rm (F)}. Then $\Rep_{\bk}(\cD)$ is locally noetherian.
\end{proposition}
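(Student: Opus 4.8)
The plan is to transfer noetherianity from $\Rep_{\bk}(\cC)$ to $\Rep_{\bk}(\cD)$ via the pullback functor $\Phi^*$, using the fact that finitely generated $\cD$-modules are detected by their pullbacks. First I would reduce to checking that every submodule $N$ of a principal projective $P_y$ of $\Rep_{\bk}(\cD)$ is finitely generated, since every finitely generated $\cD$-module is a quotient of a finite direct sum of principal projectives, and a submodule of such a sum is handled by a straightforward induction on the number of summands (pushing to the last summand and pulling back). So fix $y \in \cD$ and a submodule $N \subseteq P_y$.

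The key step is to apply $\Phi^*$. By Proposition~\ref{prop:fpullback}, since $\Phi$ satisfies property~(F), the pullback $\Phi^*(P_y)$ is a finitely generated $\cC$-module, hence noetherian by hypothesis; therefore $\Phi^*(N) \subseteq \Phi^*(P_y)$ is also finitely generated. Thus there are finitely many objects $x_1,\dots,x_k \in \cC$ and elements $s_j \in \Phi^*(N)_{x_j} = N_{\Phi(x_j)}$ generating $\Phi^*(N)$ as a $\cC$-module. I claim that the $s_j$, viewed as elements of $N_{\Phi(x_j)}$, generate $N$ as a $\cD$-module. Let $N' \subseteq N$ be the $\cD$-submodule they generate; I must show $N'_z = N_z$ for every $z \in \cD$. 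Since $\Phi$ is essentially surjective, it suffices to treat $z = \Phi(x)$ for $x \in \cC$ (the module structure carries the rest along isomorphisms). Now $\Phi^*(N') \subseteq \Phi^*(N)$ is a $\cC$-submodule containing all the generators $s_j$, hence $\Phi^*(N') = \Phi^*(N)$; evaluating at $x$ gives $N'_{\Phi(x)} = N_{\Phi(x)}$, as desired. Hence $N = N'$ is finitely generated.

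The one point requiring care — and the main obstacle — is the comparison between "$\cD$-submodule generated by the $s_j$" and "$\cC$-submodule generated by the $s_j$ after pullback": a priori the $\cD$-module $N'$ could have $\Phi^*(N')$ strictly larger than the $\cC$-submodule generated by the $s_j$, but in fact equality holds because a $\cC$-submodule of $\Phi^*(N)$ containing the generators must be everything, and $\Phi^*(N')$ is such a submodule. The subtlety is purely that $N'_{\Phi(x)}$ is computed using $\cD$-morphisms out of the various $\Phi(x_j)$, while the generation of $\Phi^*(N)$ uses only $\cC$-morphisms $x_j \to x$; one needs that the latter, after applying $\Phi$, already produce everything in $\Phi^*(N)_x$, which is exactly the conclusion of the previous paragraph. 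Everything else — the reduction to submodules of a single principal projective and the induction over direct sums — is routine diagram-chasing in the abelian category $\Rep_{\bk}(\cD)$.
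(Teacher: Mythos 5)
Your proof is correct, and it is essentially the argument behind the reference the paper cites for this statement (\cite[Cor.~3.2.5]{catgb}): pull back along $\Phi$, use property~(F) plus noetherianity of $\Rep_{\bk}(\cC)$ to get finite generation of $\Phi^*(N)$, and then observe that generators of $\Phi^*(N)$ as a $\cC$-module already generate $N$ as a $\cD$-module, with essential surjectivity covering the remaining objects. The preliminary reduction to submodules of a single principal projective is harmless but unnecessary, since the pullback argument applies verbatim to any finitely generated $\cD$-module $M$.
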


\begin{proof}
See \cite[Cor.~3.2.5]{catgb}.
\end{proof}

If $\cC$ is a category and $M_1$ and $M_2$ are $\cC$-modules, then we define $M_1 \otimes M_2$ to be the $\cC$-module defined by the formula $(M_1 \otimes M_2)_x = (M_1)_x \otimes (M_2)_x$ for all $x \in \cC$.

\begin{proposition} \label{prop:pointwisetensor}
Let $\cC$ be a category that satisfies property~{\rm (F)} and let $M$ and $N$ be finitely generated $\cC$-modules.  Then $M \otimes N$ is finitely generated.
\end{proposition}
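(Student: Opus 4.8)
The plan is to reduce to the case of principal projectives and then use property~(F) of $\cC$ directly. First I would observe that the pointwise tensor product is right exact in each variable (since tensoring $\bk$-modules is right exact), so if $P \to M$ and $Q \to N$ are surjections from finite direct sums of principal projectives, then $P \otimes Q \to M \otimes N$ is a surjection. Since a finite direct sum of principal projectives tensored with another such is a finite direct sum of modules of the form $P_x \otimes P_y$, it suffices to show that $P_x \otimes P_y$ is finitely generated for all objects $x, y \in \cC$.

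Next I would compute $P_x \otimes P_y$ explicitly. By definition $(P_x \otimes P_y)_z = \bk[\hom(x,z)] \otimes_{\bk} \bk[\hom(y,z)] = \bk[\hom(x,z) \times \hom(y,z)]$, so $P_x \otimes P_y$ is the $\cC$-module $z \mapsto \bk[\hom(x,z) \times \hom(y,z)]$. I claim this is the pullback of the principal projective $P_{(x,y)}$ on $\cC \times \cC$ along the diagonal functor $\Delta \colon \cC \to \cC \times \cC$: indeed $(P_{(x,y)})_{(z,w)} = \bk[\hom_{\cC \times \cC}((x,y),(z,w))] = \bk[\hom(x,z) \times \hom(y,w)]$, and pulling back along $\Delta$ sets $w = z$, giving exactly $\bk[\hom(x,z)\times\hom(y,z)]$. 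Thus $P_x \otimes P_y = \Delta^*(P_{(x,y)})$.

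Finally, since $\cC$ satisfies property~(F) by hypothesis, the diagonal functor $\Delta \colon \cC \to \cC \times \cC$ satisfies property~(F), and $P_{(x,y)}$ is a finitely generated $(\cC \times \cC)$-module (it is a principal projective). By Proposition~\ref{prop:fpullback}, $\Delta^*(P_{(x,y)}) = P_x \otimes P_y$ is a finitely generated $\cC$-module. Combining with the first paragraph, $M \otimes N$ is a quotient of a finite direct sum of such modules, hence finitely generated.

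I do not expect any serious obstacle here; the only point requiring a little care is the right-exactness/surjectivity bookkeeping in the first paragraph (making sure the tensor of two surjections of $\cC$-modules is again a surjection pointwise, which is immediate since surjectivity of $\bk$-module maps is preserved by $\otimes_\bk$), and the identification of $P_x \otimes P_y$ with a diagonal pullback, which is a direct unwinding of definitions.
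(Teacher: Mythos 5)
Your proof is correct, and it is essentially the argument in the source the paper cites for this statement (the paper's own ``proof'' is just a reference to \cite[Prop.~3.3.2]{catgb}): reduce by right-exactness to $P_x \otimes P_y$, identify this with $\Delta^*(P_{(x,y)})$ for the diagonal $\Delta \colon \cC \to \cC \times \cC$, and invoke Proposition~\ref{prop:fpullback}. All the steps check out, including the only delicate points you flag (pointwise surjectivity of the tensor of surjections, and the identification of $P_x \otimes P_y$ with the diagonal pullback of the principal projective at $(x,y)$).
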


\begin{proof}
See \cite[Prop.~3.3.2]{catgb}.
\end{proof}

We require a slight variant of the above proposition. We say that a $\cC$-module $M$ is {\bf generated in finite degrees} if there exist $x_1, \ldots, x_k \in \cC$ such that $M$ is generated by the $M_{x_i}$, that is, the canonical map $\bigoplus_{i=1}^k M_{x_i} \otimes P_{x_i} \to M$ is surjective. Note that if $M$ is generated in finite degrees and $M_x$ is a finitely generated $\bk$-module for all $x \in \cC$ then $M$ is finitely generated.

\begin{proposition} \label{prop:finddeg}
Let $\cC$ be a category that satisfies property~{\rm (F)} and let $M$ and $N$ be $\cC$-modules generated in finite degrees. Then $M \otimes N$ is generated in finite degrees.
\end{proposition}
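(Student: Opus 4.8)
The plan is to adapt the proof of Proposition~\ref{prop:pointwisetensor} (i.e.\ \cite[Prop.~3.3.2]{catgb}): realize $M \otimes N$ as the pullback of an ``external tensor product'' along the diagonal $\cC \to \cC \times \cC$, and rerun that argument tracking the property ``generated in finite degrees'' in place of ``finitely generated.'' The only new ingredient needed is the following variant of Proposition~\ref{prop:fpullback}, whose proof is the ``only if'' half of that result essentially verbatim: if $\Phi\colon\cC\to\cD$ satisfies property~(F) and $M$ is a $\cD$-module generated in finite degrees, then $\Phi^*(M)$ is generated in finite degrees.

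To prove this variant, suppose $M$ is generated in finite degrees by $y_1,\dots,y_m\in\cD$. For each $y_j$, property~(F) provides finitely many objects $x^{(j)}_1,\dots,x^{(j)}_{n_j}\in\cC$ and morphisms $f^{(j)}_i\colon y_j\to\Phi(x^{(j)}_i)$ such that every morphism $y_j\to\Phi(z)$ factors as $\Phi(g)\circ f^{(j)}_i$ for some $i$ and some $g\colon x^{(j)}_i\to z$. I would then check that $\Phi^*(M)$ is generated in finite degrees by the finite set of all $x^{(j)}_i$: for any $z\in\cC$ the $\bk$-module $\Phi^*(M)_z=M_{\Phi(z)}$ is spanned by the images of the maps $M(h)\colon M_{y_j}\to M_{\Phi(z)}$ as $h$ ranges over morphisms $y_j\to\Phi(z)$ and $j$ ranges over $\{1,\dots,m\}$, and writing $h=\Phi(g)\circ f^{(j)}_i$ gives $M(h)=\Phi^*(M)(g)\circ M(f^{(j)}_i)$, so the image of $M(h)$ lies in the image of $\Phi^*(M)(g)\colon\Phi^*(M)_{x^{(j)}_i}\to\Phi^*(M)_z$.

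With the variant in hand I would conclude as follows. For $\cC$-modules $A,B$ let $A\boxtimes B$ be the $\cC\times\cC$-module with $(A\boxtimes B)_{(a,b)}=A_a\otimes_\bk B_b$. One records two bookkeeping identities: $P_a\boxtimes P_b=P_{(a,b)}$ for $a,b\in\cC$, since $\bk[\hom(a,a')]\otimes_\bk\bk[\hom(b,b')]=\bk[\hom(a,a')\times\hom(b,b')]=\bk[\hom_{\cC\times\cC}((a,b),(a',b'))]$; and $\Delta^*(A\boxtimes B)=A\otimes B$ for the diagonal $\Delta\colon\cC\to\cC\times\cC$. Since a surjection of $\bk$-modules stays surjective after applying $-\otimes_\bk(-)$, the operation $\boxtimes$ sends a surjection in either variable to a surjection; hence if $M$ is generated in finite degrees by $x_1,\dots,x_k$ and $N$ by $y_1,\dots,y_\ell$, applying $\boxtimes$ to the defining surjections $\bigoplus_i M_{x_i}\otimes P_{x_i}\twoheadrightarrow M$ and $\bigoplus_j N_{y_j}\otimes P_{y_j}\twoheadrightarrow N$ and using the two identities yields a (canonical) surjection $\bigoplus_{i,j}(M_{x_i}\otimes_\bk N_{y_j})\otimes P_{(x_i,y_j)}\twoheadrightarrow M\boxtimes N$, so $M\boxtimes N$ is generated in finite degrees. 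As $\cC$ satisfies property~(F), so does $\Delta$, and applying the variant to $\Delta$ and $M\boxtimes N$ shows $M\otimes N=\Delta^*(M\boxtimes N)$ is generated in finite degrees. I do not anticipate a genuine obstacle: all of the content is in the variant of Proposition~\ref{prop:fpullback}, which is routine, and the only points requiring care are the identities $P_a\boxtimes P_b=P_{(a,b)}$ and $\Delta^*(A\boxtimes B)=A\otimes B$ together with the observation that $\otimes_\bk$ preserves surjections — which is precisely what allows us to avoid any flatness hypothesis.
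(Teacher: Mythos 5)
Your proof is correct, but it is organized differently from the paper's. The paper never leaves the category $\cC$: it takes the surjections $\bigoplus_i V_i \otimes P_{x_i} \twoheadrightarrow M$ and $\bigoplus_j W_j \otimes P_{y_j} \twoheadrightarrow N$, tensors them pointwise over $\cC$ to obtain $\bigoplus_{i,j} V_i \otimes W_j \otimes P_{x_i} \otimes P_{y_j} \twoheadrightarrow M \otimes N$, quotes Proposition~\ref{prop:pointwisetensor} to see that each $P_{x_i} \otimes P_{y_j}$ is finitely generated (hence generated in finite degrees), and finishes with three elementary closure properties of ``generated in finite degrees'' (under tensoring with an arbitrary $\bk$-module, under finite direct sums, and under quotients). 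You instead inline the external-tensor-product/diagonal-pullback mechanism that underlies Proposition~\ref{prop:pointwisetensor} itself, and to make that work you prove a new lemma: the ``generated in finite degrees'' analogue of (the easy direction of) Proposition~\ref{prop:fpullback}. Your proof of that lemma is fine, as are the bookkeeping identities $P_a \boxtimes P_b = P_{(a,b)}$ and $\Delta^*(A \boxtimes B) = A \otimes B$ and the observation that right-exactness of $\otimes_\bk$ makes $\boxtimes$ preserve surjections. The trade-off is that the paper's argument is shorter because it uses the cited result as a black box, while yours is self-contained modulo the definition of property~(F) and produces a reusable statement (property-(F) pullbacks preserve generation in finite degrees) that the paper does not record. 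Both arguments are complete.
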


\begin{proof}
Observe that (a) a finite sum of $\cC$-modules generated in finite degrees is generated in finite degrees; (b) if $K$ is a $\cC$-module generated in finite degrees and $U$ is any $\bk$-module then $U \otimes K$ is generated in finite degrees; (c) any quotient of a $\cC$-module generated in finite degrees is generated in finite degrees. Now, choose surjections $\bigoplus_{i=1}^k V_i \otimes P_{x_i} \to M$ and $\bigoplus_{j=1}^{\ell} W_j \otimes P_{y_j} \to N$, where the $x_i$ and $y_j$ are objects of $\cC$ and the $V_i$ and $W_j$ are $\bk$-modules (one can take $V_i=M_{x_i}$ and $W_j=N_{y_j}$). We thus have a surjection
\begin{displaymath}
\bigoplus_{i,j} V_i \otimes W_j \otimes P_{x_i} \otimes P_{y_j} \to M \otimes N.
\end{displaymath}
Since $\cC$ satisfies property~(F), each $P_{x_i} \otimes P_{y_j}$ is finitely generated (Proposition~\ref{prop:pointwisetensor}). Thus each term in the sum is generated in finite degrees by (b); since the sum is finite, it is generated in finite degree by (a); and so we conclude $M \otimes N$ is generated in finite degrees by (c).
\end{proof}

Now we recall the notion of a Gr\"obner category. See \cite[\S 4.3]{catgb} for more details. 

\begin{definition}
Let $\cC$ be an essentially small category, i.e.,\ there exists a set $I$ containing a unique representative of each isomorphism class in $\cC$.
For $x \in \cC$, define $|S_x| = \amalg_{y \in I} \hom(x,y)$.  Partially order $|S_x|$ by defining $f \preceq g$ if there exists a morphism $h$ such that $g = h f$. We say that $\cC$ is {\bf Gr\"obner} if the following holds for all $x \in \cC$.
\begin{itemize}
\item The poset $(|S_x|,\preceq)$ is noetherian.
\item $|S_x|$ admits a total ordering $\leq$ with the following two properties.
\begin{itemize}
\item The ordering $\leq$ is compatible with left composition, i.e.,\ $f \leq g$ implies $hf \leq hg$.
\item The restriction of $\leq$ to each $\hom(x,y)$ is a well-ordering.
\end{itemize}
\end{itemize}
We say that $\cC$ is {\bf quasi-Gr\"obner} if there exists a Gr\"obner category $\cC'$ and an essentially surjective functor $\cC' \to \cC$ satisfying property~(F).
\end{definition}

The key result about quasi-Gr\"ober categories is the following \cite[Theorem 4.3.2]{catgb}:

\begin{theorem}
\label{theorem:quasigrobner}
Let $\cC$ be a quasi-Gr\"obner category. Then for any noetherian commutative ring $\bk$, the category $\Rep_{\bk}(\cC)$ is locally noetherian.
\end{theorem}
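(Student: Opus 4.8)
The statement combines a reduction to the Gröbner case with a Gröbner-basis argument, and the plan is to carry it out in that order. If $\cC$ is quasi-Gröbner, first choose a Gröbner category $\cC'$ and an essentially surjective functor $\cC' \to \cC$ satisfying property~(F); by Proposition~\ref{prop:noeth} it then suffices to treat the case where $\cC$ is itself Gröbner. So assume $\cC$ is Gröbner, and let $N$ be a submodule of a finitely generated $\cC$-module $M$. Writing $M$ as a quotient $\pi \colon \bigoplus_{i=1}^{k} P_{x_i} \twoheadrightarrow M$ of a finite sum of principal projectives, it is enough to know that $\bigoplus_{i=1}^{k} P_{x_i}$ is noetherian, for then $\pi^{-1}(N)$ is finitely generated and hence so is $N = \pi(\pi^{-1}(N))$. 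Since finite generation, and therefore also noetherianity, is preserved by extensions in the abelian category $\Rep_{\bk}(\cC)$ (lift generators of a quotient and adjoin generators of the sub), the short exact sequence $0 \to P_{x_1} \to \bigoplus_{i=1}^{k} P_{x_i} \to \bigoplus_{i=2}^{k} P_{x_i} \to 0$ lets one induct on $k$, so the whole theorem reduces to a single claim: for every object $x$ of a Gröbner category, the principal projective $P_x$ is noetherian.

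To prove that, fix a set $I$ of isomorphism-class representatives, so that (recalling $(P_x)_y = \bk[\hom(x,y)]$) the module $P_x$ is the linearization of the representable functor $\hom(x,-)$, with distinguished $\bk$-basis $|S_x| = \amalg_{y \in I}\hom(x,y)$ carrying the noetherian partial order $\preceq$ and the left-composition action of $\cC$. Fix also the total order $\leq$ supplied by the Gröbner hypothesis, well-ordering each fiber $\hom(x,y)$ and compatible with left composition. For nonzero $m \in (P_x)_y$, let $\ini(m) \in \hom(x,y) \subseteq |S_x|$ be the $\leq$-largest basis vector occurring in $m$, with coefficient $c(m) \in \bk$. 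Given a submodule $N \subseteq P_x$, for each $s \in |S_x|$ let $\fa_s \subseteq \bk$ be the ideal of all leading coefficients $c(m)$ with $m \in N$ and $\ini(m) = s$. Combining noetherianity of the poset $|S_x|$ under its $\cC$-action with noetherianity of $\bk$, the plan is to extract a finite list $g_1,\dots,g_r \in N$ forming a Gröbner basis, meaning that the leading datum $(\ini(m), c(m))$ of every $m \in N$ is realized by a $\bk$-linear combination of morphism-translates of the $g_j$. A division algorithm — reduce $m$ by subtracting such a combination, and terminate by noetherian induction along the well-order $\leq$ on leading terms — then shows that $N$ equals the submodule generated by $g_1,\dots,g_r$, so $N$ is finitely generated and $P_x$ is noetherian.

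The main obstacle is this last step: constructing the finite Gröbner basis and justifying the division algorithm. Two points need care. First, because $\bk$ is an arbitrary noetherian commutative ring rather than a field, ``leading term'' has to be enriched to ``leading term together with its ideal of leading coefficients'', and finiteness of the Gröbner basis must be deduced by combining noetherianity of the poset $|S_x|$ with that of $\bk$ — morally, noetherianity of $|S_x| \times \Spec\bk$. Second, left composition on $|S_x|$ need not be injective, so applying a morphism to an element of $N$ can create cancellation among its lower-order terms; one must verify that compatibility of $\leq$ with left composition, together with the well-ordering of the fibers, still forces the leading term to strictly decrease at each reduction step. Everything else — the reduction in the first paragraph and the closure properties of noetherian modules — is routine abelian-category bookkeeping.
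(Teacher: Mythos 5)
The paper does not prove this theorem: it is quoted verbatim from Sam--Snowden \cite[Theorem 4.3.2]{catgb}, so there is no internal argument to compare yours against. Your outline is essentially the proof given in that reference: reduce to the Gr\"obner case via Proposition \ref{prop:noeth}, reduce local noetherianity to noetherianity of each principal projective $P_x$, and then control submodules of $P_x$ by their initial data with respect to the distinguished orders on $|S_x|$. Your first paragraph is complete and correct as written.

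The rest is a plan rather than a proof, and the two difficulties you flag are exactly the ones that must be resolved. For the first (general noetherian $\bk$), the cleanest route is not an explicit finite Gr\"obner basis plus division algorithm but a \emph{conservative} map: send $N \subseteq P_x$ to the function $s \mapsto \fa_s$ on $|S_x|$, prove conservativity by your minimal-counterexample reduction (using that $\le$ well-orders each $\hom(x,y)$, so a minimal bad element exists), and prove that the target poset of $\preceq$-monotone ideal-valued functions is noetherian by combining the well-partial-order on $|S_x|$ with the ascending chain condition in $\bk$. That last combination is carried out word for word in Substep \ref{substep:3b} of this paper's proof of Theorem \ref{mainthm3} (the argument with the ideals $I_q = (c_1,\ldots,c_q)$), so you can lift it; this sidesteps having to exhibit a finite basis at all. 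The second difficulty is more serious than you suggest: with compatibility as literally stated here ($f \le g$ implies $hf \le hg$, non-strict), the argument genuinely breaks, since $h$ may identify the leading term of $m$ with a lower term, so that $\ini(hm) \neq h \cdot \ini(m)$ and the assignment $s \mapsto \fa_s$ need not be $\preceq$-monotone. The admissible orders of \cite{catgb} satisfy strict compatibility, $f < g \Rightarrow hf < hg$; with that, left composition is injective on each $\hom(x,y)$, no cancellation of leading terms occurs, $\ini(hm) = h\,\ini(m)$ with the same leading coefficient, and both the monotonicity of the initial data and your reduction step go through. So your skeleton is right, but you should assume the strict form of compatibility and recast the finiteness step as a conservative map into a noetherian poset.
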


\subsection{Kan extension} \label{ss:kan}

Let $\Phi \colon \cC \to \cD$ be a functor. The pullback functor $\Phi^*$ on modules admits a left adjoint $\Phi_!$ called the {\bf left Kan extension}.  It also admits a right adjoint $\Phi_*$ called the {\bf right Kan extension}, but we will not need this.  

The left Kan extension can be described explicitly as follows. Let $y$ be an object of $\cD$. Define a category $\cC_{/y}$ as follows. An object of $\cC_{/y}$ is a pair $(x, f)$, where $x$ is an object of $\cC$ and $f \colon \Phi(x) \to y$ is a morphism in $\cD$. A morphism $(x',f') \to (x,f)$ in $\cC_{/y}$ is a morphism $g \colon x' \to x$ in $\cC$ such that $f'=f \circ \Phi(g)$. Suppose now that $M$ is a $\cC$-module over $\bk$.  For $y \in \cD$, define $M|_{\cC_{/y}}$ to be the $\cC_{/y}$-module defined via the formula $(M|_{\cC_{/y}})_{(x,f)} = M_x$.  We then have
\begin{displaymath}
\Phi_!(M)_y = \colim(M \vert_{\cC_{/y}}).
\end{displaymath}
That is, the value of $\Phi_!(M)$ on $y$ is the colimit of the functor $M|_{\cC_{/y}} \colon \cC_{/y} \to \Mod_{\bk}$. In certain cases, there is an even nicer description:

\begin{proposition} \label{prop:kanind}
Let $\Phi\colon \cC \to \cD$ be a faithful functor.  Assume that for all $x',x \in \cC$, the $\Aut(\Phi(x))$-orbit of any element of $\Hom_{\cD}(\Phi(x'), \Phi(x))$ contains an element of the form $\Phi(f)$ for some $f \in \Hom_{\cC}(x',x)$ that is unique up to the action of $\Aut(x)$. Let $M$ be a $\cC$-module. Then for all $x \in \cC$ we have a canonical isomorphism
\begin{displaymath}
\Psi_!(M)_{\Phi(x)}=\Ind_{\Aut(x)}^{\Aut(\Phi(x))}(M_{x}).
\end{displaymath}
\end{proposition}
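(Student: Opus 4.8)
The proof I have in mind combines the explicit colimit formula for the left Kan extension recorded in \S\ref{ss:kan} with a reduction to principal projectives. Write $P^{\cC}_y$ and $P^{\cD}_{\Phi(y)}$ for the principal projectives of $\Rep_{\bk}(\cC)$ and $\Rep_{\bk}(\cD)$. First I would record two general facts. (i) The Kan extension carries principal projectives to principal projectives: since $\Phi_!$ is left adjoint to $\Phi^*$, for every $\cD$-module $N$ we have $\Hom(\Phi_!(P^{\cC}_y),N)=\Hom(P^{\cC}_y,\Phi^*N)=N_{\Phi(y)}=\Hom(P^{\cD}_{\Phi(y)},N)$, so $\Phi_!(P^{\cC}_y)\cong P^{\cD}_{\Phi(y)}$ by Yoneda. (ii) The two functors $\Rep_{\bk}(\cC)\to\Mod_{\bk}$ given by $M\mapsto\Phi_!(M)_{\Phi(x)}$ and $M\mapsto\Ind_{\Aut(x)}^{\Aut(\Phi(x))}(M_x)$ are right exact and commute with arbitrary direct sums: $\Phi_!$ is a left adjoint, $\bk[\Aut(\Phi(x))]$ is free as a right $\bk[\Aut(x)]$-module (a set of coset representatives is a basis), and evaluation at $x$ is exact. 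Since every $\cC$-module admits a presentation $\bigoplus_j P^{\cC}_{y_j}\to\bigoplus_i P^{\cC}_{y_i}\to M\to 0$ by sums of principal projectives, it suffices to produce a natural transformation between these two functors and check that it is an isomorphism on each $P^{\cC}_y$.

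Next I would construct the comparison natural transformation $\alpha_M\colon \Ind_{\Aut(x)}^{\Aut(\Phi(x))}(M_x)\to\Phi_!(M)_{\Phi(x)}$. Let $\iota\colon M_x\to\Phi_!(M)_{\Phi(x)}=\colim(M|_{\cC_{/\Phi(x)}})$ be the structure map attached to the object $(x,\id_{\Phi(x)})\in\cC_{/\Phi(x)}$. Using that $\Phi_!(M)_{\Phi(x)}$ is a representation of $\Aut(\Phi(x))$, set $\alpha_M(g\otimes m)=g\cdot\iota(m)$. The only thing to check is that this respects the relation defining $\otimes_{\bk[\Aut(x)]}$, i.e.\ that $\iota(h\cdot m)=\Phi(h)\cdot\iota(m)$ for $h\in\Aut(x)$; this holds because $h$ defines an isomorphism $(x,\Phi(h))\xrightarrow{\ \sim\ }(x,\id_{\Phi(x)})$ in $\cC_{/\Phi(x)}$, while the action of $\Phi(h)$ on the colimit is the one induced by the endofunctor $(x',f)\mapsto(x',\Phi(h)\circ f)$ of $\cC_{/\Phi(x)}$. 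The map $\alpha_M$ is then manifestly $\Aut(\Phi(x))$-equivariant and natural in $M$.

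It remains to verify that $\alpha_{P^{\cC}_y}$ is an isomorphism. Under the identifications $\Phi_!(P^{\cC}_y)_{\Phi(x)}=\bk[\Hom_{\cD}(\Phi(y),\Phi(x))]$ (fact (i)) and $\Ind_{\Aut(x)}^{\Aut(\Phi(x))}((P^{\cC}_y)_x)=\bk[\Aut(\Phi(x))\times_{\Aut(x)}\Hom_{\cC}(y,x)]$, the map $\alpha_{P^{\cC}_y}$ becomes the $\bk$-linearization of the map of $\Aut(\Phi(x))$-sets
\[
\Aut(\Phi(x))\times_{\Aut(x)}\Hom_{\cC}(y,x)\longrightarrow \Hom_{\cD}(\Phi(y),\Phi(x)),\qquad [\sigma,g]\mapsto\sigma\circ\Phi(g).
\]
This map is surjective by the orbit hypothesis (any $\psi\colon\Phi(y)\to\Phi(x)$ can be written $\sigma^{-1}\Phi(g)$ with $\sigma\in\Aut(\Phi(x))$, $g\in\Hom_{\cC}(y,x)$), and injective by the uniqueness clause together with faithfulness of $\Phi$: if $\sigma_1\Phi(g_1)=\sigma_2\Phi(g_2)$ then $\Phi(g_1)$ and $\Phi(g_2)$ lie in one $\Aut(\Phi(x))$-orbit, so $g_2=ag_1$ for some $a\in\Aut(x)$, and then comparing the two expressions forces $\sigma_2=\sigma_1\Phi(a)^{-1}$, i.e.\ $[\sigma_1,g_1]=[\sigma_2,g_2]$. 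Hence $\alpha_{P^{\cC}_y}$ is bijective; by the reduction of the first paragraph $\alpha_M$ is then an isomorphism for every $M$, which is the assertion (up to the evident typo $\Psi_!=\Phi_!$).

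The delicate point is the injectivity step in the last paragraph: from $\sigma_2^{-1}\sigma_1\Phi(g_1)=\Phi(a)\Phi(g_1)$ one must deduce $\sigma_2^{-1}\sigma_1=\Phi(a)$, which amounts to controlling the stabilizer in $\Aut(\Phi(x))$ of the morphism $\Phi(g_1)$ under post-composition. This is exactly the information packaged in the ``$f$ is unique up to the action of $\Aut(x)$'' hypothesis, and the main care needed in writing the argument is to extract and use the hypothesis in this stabilizer-theoretic form rather than merely as a statement about orbit representatives; everything else is formal manipulation of the colimit defining $\Phi_!$.
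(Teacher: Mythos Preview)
Your argument is correct, but it follows a genuinely different route from the paper. The paper works directly with the colimit formula: fixing coset representatives $\{h_i\}$ for $\Aut(\Phi(x))/\Aut(x)$, it shows that every object $(x',g)$ of $\cC_{/\Phi(x)}$ admits a unique morphism to a unique $(x,h_i)$; this exhibits the discrete subcategory $\{(x,h_i)\}$ as cofinal, so the colimit is simply $\bigoplus_i M_x$, which is the induced module. Your approach instead builds a natural transformation $\alpha_M$, reduces to principal projectives via right exactness and a presentation, and then checks bijectivity on the set map $\Aut(\Phi(x))\times_{\Aut(x)}\Hom_{\cC}(y,x)\to\Hom_{\cD}(\Phi(y),\Phi(x))$. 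The paper's proof is shorter and more elementary (no naturality bookkeeping, no presentation step); your approach has the virtue that the $\Aut(\Phi(x))$-equivariance is built in from the start rather than observed after the fact.

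One comment on your injectivity argument: as you yourself flag, the two-step version you first write (``$g_2=ag_1$ for some $a$, and then comparing forces $\sigma_2=\sigma_1\Phi(a)^{-1}$'') does not go through as stated, since one cannot cancel $\Phi(g_1)$ from $\sigma_1\Phi(g_1)=\sigma_2\Phi(a)\Phi(g_1)$. The clean fix is exactly what the paper does and what your final paragraph gestures at: read the hypothesis as saying that the \emph{pair} $(h,f)$ in a factorization $\psi=h\,\Phi(f)$ is unique up to the $\Aut(x)$-action $(h,f)\mapsto(h\Phi(a)^{-1},af)$. Then $\sigma_1\Phi(g_1)=\sigma_2\Phi(g_2)$ gives two such factorizations of the same $\psi$, and the relation $[\sigma_1,g_1]=[\sigma_2,g_2]$ follows immediately. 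With that reading your proof is complete.
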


\begin{proof}
Let $\{h_i\}_{i \in I}$ be a set of coset representatives for $\Aut(\Phi(x))/\Aut(x)$.  For each $i \in I$, we thus have an object $(x, h_i)$ of $\cC_{/\Phi(x)}$.  Consider an object $(x',g)$ of $\cC_{/\Phi(x)}$.  To prove the proposition, it is enough to prove that there is a unique $i \in I$ and a unique morphism $(x',g) \rightarrow (x,h_i)$ of $\cC_{/\Phi(x)}$.

By definition, $g$ is a morphism $\Phi(x') \to \Phi(x)$ in $\cD$.  By assumption, we can factor $g$ as $h \Phi(f)$ for some $h \in \Aut(\Phi(x'))$ and some $f \in \Hom_{\cC}(x',x)$. Moreover, this factorization is unique up to the action of $\Aut(x)$. It follows that there is a unique factorization of the form $h_i \Phi(f)$. The morphism $f$ now furnishes a map $(x',g) \to (x,h_i)$ in $\cC_{/\Phi(x)}$.  It is clear from the discussion that this is the unique $i$ for which there is such a morphism, and that $f$ is the unique such morphism.
\end{proof}

Left Kan extensions can be used to construct principal projectives, as follows. Let $x \in \cC$, let $\pt$ be the point category (one object, one morphism), and let $i_x \colon \pt \to \cC$ be the functor taking the object of $\pt$ to $x$. Regarding $\bk$ as a $\pt$-module, we have $(i_x)_!(\bk)=P_x$. Indeed, if $M$ is a $\cC$-module, then by definition
\begin{displaymath}
\Hom_{\Rep_\bk(\cC)}((i_x)_!(\bk), M)=\Hom_{\Rep_\bk(\pt)}(\bk, i_x^*(M))=M_x,
\end{displaymath}
and thus $(i_x)_!(\bk)$ represents the same functor as $P_x$.

Return now to the setting of a functor $\Phi \colon \cC \to \cD$. Put $y=\Phi(x)$. Then $\Phi \circ i_x=i_y$, so
\begin{align} \label{eqn:kan-proj}
P_y=(i_y)_!(\bk)=\Phi_!((i_x)_!(\bk))=\Phi_!(P_x).
\end{align}
We thus see that the left Kan extension takes principal projectives to principal projectives. Since $\Phi_!$ is right exact, it follows from this that $\Phi_!$ takes finitely generated $\cC$-modules to finitely generated $\cD$-modules.

\subsection{$\cC$-groups and their representations}

Let $\cC$ be a category. A {\bf $\cC$-group} is a functor from $\cC$ to the category of groups. Fix a $\cC$-group $\bG$. A {\bf $\bG$-module} over $\bk$ is a $\cC$-module $M$ equipped with a $\bk$-linear action of $\bG_x$ on $M_x$ for all $x \in \cC$, such that for all morphisms $f \colon x \to y$ in $\cC$ the induced morphism $f_* \colon M_x \to M_y$ is compatible with the actions via the induced homomorphism $f_* \colon \bG_x \to \bG_y$.  In other words, for $m \in M_x$ and $g \in \bG_x$ we have $f_*(gm)=f_*(g) f_*(m)$. The category $\Rep_{\bk}(\bG)$ of $\bG$-modules is a Grothendieck abelian category.

Let $M$ be a $\bG$-module. For $x \in \cC$, let $\bH_i(\bG,M)_x$ be the group homology $\rH_i(\bG_x, M_x)$. If $f \colon x \to y$ is a morphism in $\cC$, then the induced morphisms $f_*\colon \bG_x \rightarrow \bG_y$ and $f_*\colon M_x \rightarrow M_y$ together induce a morphism $f_* \colon \bH_i(\bG,M)_x \to \bH_i(\bG,M)_y$.  This yields a $\cC$-module structure on $\bH_i(\bG,M)$.  If $\bk$ is a commutative ring, then we will denote by $\ul{\bk}$ the constant $\cC$-module defined via the formula $\ul{\bk}_x = \bk$.  We then have $\bH_i(\bG,\ul{\bk})_x = \rH_i(\bG_x, \bk)$.

The following proposition concerns the homology of a semi-direct product of $\cC$-groups.

\begin{proposition} \label{prop:hochserre}
Let $\bG$ and $\bE$ be $\cC$-groups, and let $\pi \colon \bG \to \bE$ and $\iota \colon \bE \to \bG$ be morphisms of $\cC$-groups such that $\pi \circ \iota = \id$. Let $\bK=\ker(\pi)$, which is also a $\cC$-group. Then we have the following:
\begin{enumerate}[\rm \indent (1)]
\item $\bH_i(\bK, \ul{\bk})$ is naturally an $\bE$-module.
\item As a $\cC$-module, $\bH_i(\bE, \ul{\bk})$ is a direct summand of $\bH_i(\bG, \ul{\bk})$ via $\iota_*$ and $\pi_*$.
\item Write $\bH_r(\bG, \ul{\bk})=\rH_r(\bE, \ul{\bk}) \oplus M$ as in {\rm (2)}. Then $M$ admits a $\cC$-module filtration where the graded pieces are subquotients of $\bH_i(\bE, \bH_{r-i}(\bK, \ul{\bk}))$ with $0 \le i \le r-1$.
\end{enumerate}
\end{proposition}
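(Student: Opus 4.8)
The plan is to run the Lyndon--Hochschild--Serre spectral sequence of the short exact sequence of $\cC$-groups $1 \to \bK \to \bG \to \bE \to 1$, entirely in the category of $\cC$-modules, and then use the splitting $\pi \circ \iota = \id$ to control its behavior. First I would establish (1): since $\bE$ acts on $\bK$ by conjugation via the splitting $\iota$ (for each $x$, the group $\bE_x$ acts on $\bK_x$ because $\bG_x \cong \bK_x \rtimes \bE_x$), and this conjugation action is compatible with the maps $f_*$ for $f \colon x \to y$, the group homology $\bH_i(\bK, \ul{\bk})$ inherits a $\bE$-module structure in the sense of \S 2.3; the key point is naturality of the conjugation action in $x$, which is routine to check from the functoriality of all the maps involved.

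Next, for (2), I would note that for each object $x$ the LHS spectral sequence $E^2_{p,q}(x) = \rH_p(\bE_x, \rH_q(\bK_x, \bk)) \Rightarrow \rH_{p+q}(\bG_x, \bk)$ is natural in $x$ (it is functorial for maps of group extensions, and every morphism $f \colon x \to y$ in $\cC$ induces such a map by hypothesis), so we obtain a spectral sequence of $\cC$-modules $E^2_{p,q} = \bH_p(\bE, \bH_q(\bK, \ul{\bk})) \Rightarrow \bH_{p+q}(\bG, \ul{\bk})$. The edge map associated to the base is the map $\pi_* \colon \bH_r(\bG, \ul{\bk}) \to \bH_r(\bE, \ul{\bk})$ (using that $\bH_q(\bK,\ul\bk)$ has trivial coefficients in degree $q = 0$, namely $\ul\bk$ itself), and the existence of the splitting $\iota$ with $\pi \circ \iota = \id$ shows $\pi_* \circ \iota_* = \id$ on $\bH_r(\bE, \ul{\bk})$; hence $\iota_*$ is a split injection of $\cC$-modules and $\bH_r(\bE, \ul{\bk})$ is a direct summand of $\bH_r(\bG, \ul{\bk})$ with complement $M = \ker(\pi_*)$ (or rather the image of the complementary idempotent $1 - \iota_* \pi_*$).

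For (3), I would analyze the $E^\infty$-page. The abutment $\bH_r(\bG, \ul\bk)$ carries a finite filtration with graded pieces $E^\infty_{p,r-p}$ for $0 \le p \le r$, and each $E^\infty_{p,q}$ is a subquotient of $E^2_{p,q} = \bH_p(\bE, \bH_q(\bK, \ul\bk))$ --- all of this in the category of $\cC$-modules, since differentials and the filtration are maps of $\cC$-modules. The top piece $E^\infty_{r,0}$ is a subquotient (in fact, because of the splitting, exactly equal to a copy) of $E^2_{r,0} = \bH_r(\bE, \ul\bk)$, and indeed the splitting forces $E^2_{r,0} = E^\infty_{r,0}$: all differentials out of the $(r,0)$ spot vanish because $\pi_*$ is surjective (split), so $d_k \colon E^k_{r,0} \to E^k_{r-k, k-1}$ is zero for all $k \ge 2$, and there are no differentials into $(r,0)$. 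Thus the summand $\bH_r(\bE,\ul\bk) = E^\infty_{r,0}$ splits off compatibly with the filtration, and the complementary summand $M$ inherits the induced filtration whose graded pieces are the $E^\infty_{p,r-p}$ for $0 \le p \le r-1$, each of which is a subquotient of $\bH_p(\bE, \bH_{r-p}(\bK, \ul\bk))$; reindexing $i = p$ gives the stated range $0 \le i \le r-1$.

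The main obstacle I anticipate is not any single computation but the bookkeeping needed to make the LHS spectral sequence genuinely a spectral sequence of $\cC$-modules --- one must check that the standard construction (via the filtration on the bar complex, or via the Grothendieck spectral sequence for the composite of $\bk \otimes_{\bE} (-)$ and $\bk \otimes_{\bK} (-)$) is natural for the morphisms $f \colon x \to y$, and that the splitting $\iota$ induces the splitting of the edge map on the nose rather than just up to the filtration. Once that naturality is in hand, the argument is a formal consequence of the multiplicativity and degeneration behavior forced by a split surjection of groups, applied objectwise and assembled.
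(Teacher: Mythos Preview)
Your proposal is correct and follows essentially the same approach as the paper: run the Hochschild--Serre spectral sequence objectwise, observe it is natural in $x$ so that it becomes a spectral sequence of $\cC$-modules, identify the edge map $\bH_r(\bG,\ul\bk)\to E^2_{r,0}=\bH_r(\bE,\ul\bk)$ with $\pi_*$, and use that $\pi_*$ is split surjective to conclude that $M=\ker(\pi_*)$ carries the residual filtration with graded pieces subquotients of $E^2_{i,r-i}$ for $0\le i\le r-1$. You supply more detail than the paper (which dispatches (2) as ``clear'' and for (3) simply cites Weibel for the edge-map identification), but the argument is the same.
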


\begin{proof}
(1) The conjugation action of $\bG$ on $\bK$ is $\cC$-linear. On homology, $\bK$ acts trivially, and hence this action descends to give an $\bE$-module structure on $\bH_i(\bK, \ul{\bk})$.

(2) This is clear.

(3) For $x \in \cC$ we have a short exact sequence of groups $1 \to \bK_x \to \bG_x \to \bE_x\to 1$, which gives a Hochschild--Serre spectral sequence
\[
\rE^2_{p,q} = \bH_p(\bE_x, \bH_q(\bK_x, \bk)) \Longrightarrow \bH_{p+q}(\bG_x, \bk).
\]
The spectral sequence is functorial in $x$, and so we get a spectral sequence of $\cC$-modules
\[
\rE^2_{p,q} = \bH_p(\bE, \bH_q(\bK, \ul{\bk})) \Longrightarrow \bH_{p+q}(\bG, \ul{\bk}).
\]
In particular, $\bH_r(\bG, \ul{\bk})$ has a filtration by subquotients of the terms $\rE^2_{i,r-i}$. The edge map $\bH_r(\bG, \ul{\bk}) \to \bH_r(\bE, \rH_0(\bK, \ul{\bk}))$ coincides with the map on $\rH_r$ induced by $\pi$ (see \cite[\S 6.8.2]{weibel}) which we know is a split surjection, so the kernel $M$ has a filtration by subquotients of $\rE^2_{i,r-i}$ for $0 \le i \le r-1$.
\end{proof}

\section{The category \texorpdfstring{$\OI$}{OI} and variants}
\label{s:oi}

\subsection{Definitions and first results}

Let $\OI$ be the category whose objects are finite totally ordered sets and whose morphisms are order-preserving injections. For a non-negative integer $d$, we define a variant $\OI(d)$ as follows. An object of $\OI(d)$ is a pair $(S, \lambda)$ where $S$ is a totally ordered set and $\lambda=(\lambda_1<\cdots<\lambda_d)$ is an increasing $d$-tuple in $S$. A morphism $(S,\lambda) \to (T,\mu)$ in $\OI(d)$ is an order-preserving injection $f \colon S \to T$ satisfying $f(\lambda)=\mu$. Note that $\OI = \OI(0)$. There is a functor $\Phi \colon \OI(d) \to \OI$ given by $\Phi(S,\lambda)=S$. We will continue to use the notation $\Phi$ for this functor throughout the paper (and use it for all values of $d$).

\begin{remark}
We introduce $\OI(d)$ to help us study an analogous category $\OVI(R,d)$, the motivation for which is discussed in Remark \ref{remark:ovidmotivation} below.
\end{remark}

Recall that $[n]$ denotes the ordered set $\{1,\ldots,n\}$.  Given an $\OI$-module $M$,
we will write $M_n$ for $M_{[n]}$.  The category $\OI$ is equivalent to its full
subcategory spanned by the $[n]$, so the data of an $\OI$-module $M$ is equivalent
to the data of the $M_n$ together with the maps $f_{\ast}\colon M_n \rightarrow M_{m}$
induced by the order preserving maps $f\colon [n] \rightarrow [m]$.  Similarly, if $M$
is an $\OI(d)$-module and $\lambda$ is an increasing $d$-tuple in $[n]$, then we will
write $M_{n,\lambda}$ for $M_{([n],\lambda)}$.  
  
\begin{proposition} \label{proposition:oidoi}
There is an equivalence of categories $\OI(d)=\OI^{d+1}$.
\end{proposition}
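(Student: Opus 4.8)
The plan is to exhibit an explicit equivalence by ``cutting'' a marked ordered set along its distinguished $d$-tuple into $d+1$ consecutive blocks. Given an object $(S,\lambda)$ of $\OI(d)$ with $\lambda=(\lambda_1<\cdots<\lambda_d)$, introduce formal symbols $\lambda_0$ and $\lambda_{d+1}$ smaller than (resp.\ larger than) everything in $S$, and set $S_i=\{s\in S : \lambda_i<s<\lambda_{i+1}\}$ for $0\le i\le d$, each viewed as a totally ordered set and hence an object of $\OI$. Since $S=S_0\sqcup\{\lambda_1\}\sqcup S_1\sqcup\cdots\sqcup\{\lambda_d\}\sqcup S_d$ as ordered sets, an $\OI(d)$-morphism $f\colon(S,\lambda)\to(T,\mu)$ --- being an order-preserving injection that carries $\lambda_i$ to $\mu_i$ for each $i$ --- necessarily restricts to an order-preserving injection $f_i\colon S_i\to T_i$. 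This defines a functor $F\colon\OI(d)\to\OI^{d+1}$ by $F(S,\lambda)=(S_0,\dots,S_d)$ and $F(f)=(f_0,\dots,f_d)$, with functoriality immediate because restriction commutes with composition and preserves identities.

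Next I would construct an inverse. Given $(T_0,\dots,T_d)\in\OI^{d+1}$, let $G(T_0,\dots,T_d)$ be the ordered set $T_0\sqcup\{\ast_1\}\sqcup T_1\sqcup\cdots\sqcup\{\ast_d\}\sqcup T_d$, ordered so that every block precedes the next and each $T_i$ keeps its given order, equipped with the distinguished tuple $(\ast_1<\cdots<\ast_d)$. A tuple $(g_0,\dots,g_d)$ of $\OI$-morphisms $g_i\colon T_i\to T_i'$ glues to the map that equals $g_i$ on $T_i$ and sends $\ast_j$ to $\ast_j$; this map is injective because the images of the blocks and of the marked points are pairwise disjoint, and order-preserving because it is $g_i$ within a block and respects the block ordering across blocks and marked points. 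Thus $G\colon\OI^{d+1}\to\OI(d)$ is a functor.

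Finally I would check that $F$ and $G$ are mutually quasi-inverse. The composite $F\circ G$ is literally the identity of $\OI^{d+1}$: cutting $G(T_0,\dots,T_d)$ along its marked points returns exactly $(T_0,\dots,T_d)$. For $G\circ F$, for each $(S,\lambda)$ there is a canonical order isomorphism $S\xrightarrow{\ \sim\ }S_0\sqcup\{\ast_1\}\sqcup\cdots\sqcup S_d$ sending $\lambda_i$ to $\ast_i$ and restricting to the identity on each $S_i$; these isomorphisms are natural in $(S,\lambda)$, so $G\circ F\cong\id_{\OI(d)}$. (Equivalently, one can bypass $G$ and verify directly that $F$ is essentially surjective, via the gluing construction, and fully faithful, the bijection on Hom-sets being ``restrict to the blocks'' with inverse ``glue''.)

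I do not expect a genuine obstacle here, as the argument is entirely bookkeeping. The only point requiring a moment's care is confirming that gluing order-preserving injections on the blocks produces an order-preserving injection on the whole set --- i.e.\ checking comparisons between elements lying in distinct blocks or equal to distinct marked points --- but this is immediate from the fact that the order on $G(T_0,\dots,T_d)$ is the ``block index first'' order, so any inequality not internal to a single block is preserved automatically.
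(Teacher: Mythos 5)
Your proof is correct and uses exactly the same construction as the paper: cutting $(S,\lambda)$ along the marked points into $d+1$ consecutive blocks, which the paper states with the verification left as "one easily verifies." You have simply supplied the quasi-inverse (gluing) and the naturality check that the paper omits.
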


\begin{proof}
Let $(S, \lambda)$ be an object of $\OI(d)$. For $1 \le i \le d+1$, let $S_i$ be the set of elements $x \in S$ such that $\lambda_{i-1}<x<\lambda_i$ where, by convention, $\lambda_0<x<\lambda_{d+1}$ for all $x$. One easily verifies that $(S, \lambda) \mapsto (S_1, \ldots, S_{d+1})$ is an equivalence.
\end{proof}

\begin{corollary} \label{cor:OIgrob}
The category $\OI(d)$ is Gr\"obner. In particular, the category of $\OI(d)$-modules is locally noetherian.
\end{corollary}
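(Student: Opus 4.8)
The plan is to deduce everything from the known fact that $\OI$ itself is Gröbner together with Proposition~\ref{proposition:oidoi}. First I would recall that $\OI$ is a Gröbner category: this is established in \cite[\S 4.3]{catgb} (the poset of ``words in an increasing alphabet'' is noetherian by a Higman/Dickson-type argument, and the lexicographic order on $\hom([n],[m]) = \{\text{increasing injections}\}$ gives a compatible well-ordering). So the base case $\OI(0) = \OI$ is done.

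Next, the key structural step is that a finite product of Gröbner categories is Gröbner. Here I would invoke \cite[Prop.~4.3.5 or Thm.~4.3.6]{catgb}, where this closure property is proved; concretely, if $\cC$ and $\cD$ are Gröbner then for $(x,x') \in \cC \times \cD$ one has $|S_{(x,x')}| = |S_x| \times |S_{x'}|$ as posets, and the product of two noetherian posets is noetherian (Dickson's lemma / the standard fact about well-quasi-orders), while the product of the two admissible well-orderings — say ordered first by a suitable rank function and then lexicographically — is again compatible with left composition and restricts to a well-ordering on each $\hom$-set. Iterating, $\OI^{d+1}$ is Gröbner for every $d \ge 0$. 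Combining this with the equivalence $\OI(d) \cong \OI^{d+1}$ of Proposition~\ref{proposition:oidoi}, and noting that being Gröbner is invariant under equivalence of categories (the definition only references $\hom$-sets, composition, and isomorphism classes), we conclude $\OI(d)$ is Gröbner.

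Finally, the ``in particular'' clause is immediate: a Gröbner category is in particular quasi-Gröbner (take $\cC' = \cC$ with the identity functor, which is trivially essentially surjective and satisfies property (F)), so Theorem~\ref{theorem:quasigrobner} gives that $\Rep_{\bk}(\OI(d))$ is locally noetherian for every noetherian commutative ring $\bk$.

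The only real obstacle here is making sure the two closure statements invoked — that $\OI$ is Gröbner, and that a finite product of Gröbner categories is Gröbner — are genuinely available from \cite{catgb} and applied correctly; both are, so the corollary is essentially a bookkeeping consequence of Proposition~\ref{proposition:oidoi} and Theorem~\ref{theorem:quasigrobner}. If one preferred a self-contained argument avoiding the product-of-Gröbner-categories lemma, one could instead exhibit a Gröbner structure on $\OI(d)$ directly: order $|S_{(S,\lambda)}|$ by first comparing the sizes of the $d+1$ ``gap'' blocks and then refining lexicographically, and check noetherianity of the resulting poset via finitely many applications of Higman's lemma (one per block). I would present the short argument via the product, flagging the direct construction as an alternative.
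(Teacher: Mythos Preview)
Your proposal is correct and follows essentially the same approach as the paper: cite that $\OI$ is Gr\"obner (the paper uses \cite[Theorem~7.1.2]{catgb}), cite that a finite product of Gr\"obner categories is Gr\"obner (the paper uses \cite[Proposition~4.3.5]{catgb}), apply Proposition~\ref{proposition:oidoi}, and then invoke Theorem~\ref{theorem:quasigrobner}. The only difference is that you add more detail and offer a direct alternative, both of which are fine but unnecessary.
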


\begin{proof}
By \cite[Theorem~7.1.2]{catgb} the category $\OI$ is Gr\"obner, and by \cite[Proposition~4.3.5]{catgb} a finite product of Gr\"obner categories is Gr\"obner, so by Proposition \ref{proposition:oidoi} the category $\OI(d)$ is Gr\"obner.  The assertion about finitely generated $\OI(d)$-modules now follows from Theorem \ref{theorem:quasigrobner}.
\end{proof}

\begin{corollary} \label{cor:OIF}
The category $\OI(d)$ satisfies Property~{\rm (F)}. In particular, the tensor product of finitely generated $\OI(d)$-modules is a finitely generated $\OI(d)$-module and the tensor product of $\OI$-modules that are generated in finite degree is also
generated in finite degree.
\end{corollary}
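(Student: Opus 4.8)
The plan is to reduce everything to the single category $\OI$, for which property~(F) is elementary, together with the fact that property~(F) is inherited by finite products of categories. By definition, the assertion ``$\OI(d)$ satisfies property~(F)'' means that the diagonal $\Delta\colon\OI(d)\to\OI(d)\times\OI(d)$ satisfies Definition~\ref{def:propF}. Using the equivalence $\OI(d)\simeq\OI^{d+1}$ of Proposition~\ref{proposition:oidoi} together with the obvious reshuffling equivalence $\OI^{d+1}\times\OI^{d+1}\simeq(\OI\times\OI)^{d+1}$, this diagonal is identified with the $(d+1)$-fold product of the diagonal $\Delta_0\colon\OI\to\OI\times\OI$. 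Since property~(F) of a functor is unchanged under composing with equivalences on either side, it is enough to show: (i) a finite product of functors each satisfying property~(F) again satisfies property~(F); and (ii) $\Delta_0$ satisfies property~(F), i.e.\ $\OI$ satisfies property~(F).

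Claim~(i) is a direct unwinding of Definition~\ref{def:propF}. Given an object $(y_1,\dots,y_k)$ of the target, choose for each $j$ a finite list of objects $x^{(j)}_1,\dots,x^{(j)}_{n_j}$ and morphisms $f^{(j)}_a\colon y_j\to\Phi_j(x^{(j)}_a)$ witnessing property~(F) for the $j$-th factor $\Phi_j\colon\cC_j\to\cD_j$; then the $\prod_j n_j$ tuples $\bigl(x^{(1)}_{a_1},\dots,x^{(k)}_{a_k}\bigr)$, with the corresponding tuples of morphisms, witness property~(F) for $\prod_j\Phi_j$ at $(y_1,\dots,y_k)$, since any morphism of $\prod_j\cD_j$ into $\bigl(\Phi_1(x_1),\dots,\Phi_k(x_k)\bigr)$ factors coordinatewise. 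Induction on $k$ then gives the general finite product.

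For claim~(ii), fix an object $(S,T)$ of $\OI\times\OI$. Call an \emph{amalgam} of $(S,T)$ a finite totally ordered set $U$ together with order-preserving injections $S\hookrightarrow U$ and $T\hookrightarrow U$ whose images cover $U$; every amalgam has $|U|\le|S|+|T|$, so up to isomorphism there are only finitely many, say $U_1,\dots,U_n$, with tautological morphisms $\iota_i=(\iota_i^S,\iota_i^T)\colon(S,T)\to(U_i,U_i)$ in $\OI\times\OI$. Now let $(a,b)\colon(S,T)\to(V,V)$ be arbitrary. The subset $W=a(S)\cup b(T)$ of $V$, equipped with the order induced from $V$ and with the corestrictions of $a$ and $b$, is an amalgam; choosing an isomorphism $\phi\colon U_i\to W$ carrying $\iota_i^S,\iota_i^T$ to these corestrictions and setting $g=(W\hookrightarrow V)\circ\phi$, we obtain $g\in\Hom_{\OI}(U_i,V)$ with $(a,b)=(g,g)\circ\iota_i=\Delta_0(g)\circ\iota_i$. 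This is exactly the condition in Definition~\ref{def:propF}, so $\OI$ satisfies property~(F); combined with (i), so does $\OI(d)$. (Alternatively, property~(F) for $\OI$ is recorded in \cite{catgb}.) The two ``in particular'' statements are then immediate: the first is Proposition~\ref{prop:pointwisetensor} and the second Proposition~\ref{prop:finddeg}, applied with $\cC=\OI(d)$ (taking $d=0$ for the version phrased in terms of $\OI$).

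The only point requiring any thought is the finiteness observation in~(ii) — that two fixed finite ordered sets admit, up to isomorphism, only finitely many amalgams — and even this is clear since any such amalgam has size at most $|S|+|T|$. Everything else is a formal manipulation of the definitions in \S\ref{s:cat}.
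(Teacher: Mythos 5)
Your proof is correct and follows essentially the same route as the paper: decompose $\OI(d)\simeq\OI^{d+1}$ via Proposition~\ref{proposition:oidoi}, observe that property~(F) passes to finite products, establish property~(F) for $\OI$ itself, and deduce the tensor-product statements from Propositions~\ref{prop:pointwisetensor} and~\ref{prop:finddeg}. The only difference is that you write out the amalgam argument showing $\OI$ satisfies property~(F), whereas the paper simply remarks that this can be proved as in \cite[Proposition~7.3.1]{catgb}; your argument for that step is correct.
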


\begin{proof}
The category $\OI$ satisfies Property~(F): this can be proved similarly to \cite[Proposition~7.3.1]{catgb}. One easily sees that a finite product of categories satisfying Property~(F) again satisfies Property~(F), which combined with Proposition \ref{proposition:oidoi} yields the fact that
$\OI(d)$ satisfies Property~(F).  The assertion about tensor products of finitely generated $\OI(d)$-modules now
follows from Proposition \ref{prop:pointwisetensor}, and the assertion about tensor products of $\OI$-modules that
are generated in finite degree follows from Proposition~\ref{prop:finddeg}.
\end{proof}

Finally, we state a result about the growth of finitely generated $\OI$-modules over fields.

\begin{proposition} \label{prop:oidim}
Let $M$ be a finitely generated $\OI$-module over a field $\bk$. Then the function $n \mapsto \dim_\bk M_n$ is a polynomial function for $n \gg 0$.
\end{proposition}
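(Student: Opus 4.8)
The plan is to reduce the statement to a structural description of finitely generated $\OI$-modules and then extract the dimension count. First I would recall the standard structure theory for $\OI$-modules, which parallels that for $\FI$-modules: the principal projective $P_{[d]}$ has $(P_{[d]})_n = \bk[\Hom_{\OI}([d],[n])]$, and $\Hom_{\OI}([d],[n])$ is the set of order-preserving injections $[d] \hookrightarrow [n]$, which has cardinality $\binom{n}{d}$. Thus $\dim_\bk (P_{[d]})_n = \binom{n}{d}$, which is a polynomial in $n$ of degree $d$ (and exactly polynomial for all $n \ge 0$, not just eventually). Since a finitely generated $M$ is a quotient of a finite direct sum $\bigoplus_{j} P_{[d_j]}$, we immediately get an upper bound $\dim_\bk M_n \le \sum_j \binom{n}{d_j}$, so the dimension grows at most polynomially; the real content is that it is \emph{eventually exactly} a polynomial.

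The natural way to get the exact-polynomial conclusion is via a finite resolution, or rather a finite filtration argument. The cleanest route uses the fact, available since $\OI$ is Gr\"obner (Corollary~\ref{cor:OIgrob}), that finitely generated $\OI$-modules admit finite free resolutions up to a suitable notion, or more directly that the category of finitely generated $\OI$-modules over a field is well-behaved enough that Hilbert-series type arguments apply. Concretely, I would proceed as follows: let $M$ be finitely generated, pick a surjection $P \twoheadrightarrow M$ with $P$ a finite sum of principal projectives, and let $K$ be the kernel. By local noetherianity (Corollary~\ref{cor:OIgrob}), $K$ is again finitely generated, so $\dim_\bk K_n$ is also bounded by a polynomial, and $\dim_\bk M_n = \dim_\bk P_n - \dim_\bk K_n$. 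This gives an alternating-sum expression but does not by itself terminate. To make it terminate one invokes that $\OI$ has finite global dimension on finitely generated modules, or — more elementary and self-contained — that the Hilbert series of a finitely generated $\OI$-module is a rational function of a controlled shape whose denominator forces eventual polynomiality. The key input here is the Gr\"obner property: it provides, for each finitely generated $M$, an initial submodule / associated monomial module whose Hilbert function agrees with that of $M$ and which is a finite union of ``shifted'' principal-projective pieces, each contributing a shifted binomial coefficient $\binom{n-c}{d}$.

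So the main steps, in order, are: (i) compute $\dim_\bk(P_{[d]})_n = \binom{n}{d}$ and observe it is polynomial in $n$; (ii) invoke the Gr\"obner structure of $\OI$ from Corollary~\ref{cor:OIgrob} to replace $M$ by its initial module $\ini(M)$, which has the same Hilbert function and is a ``monomial'' $\OI$-module; (iii) decompose $\ini(M)$ (or stratify it by a finite filtration) into finitely many pieces each of whose Hilbert function is eventually a shifted binomial coefficient $\binom{n-c}{d}$; (iv) conclude that $\dim_\bk M_n = \sum \pm \binom{n-c_k}{d_k}$ for $n \gg 0$, which is a polynomial in $n$. Alternatively, one can cite the machinery of \cite{catgb} directly: a Gr\"obner category with the relevant finiteness has the property that Hilbert series of finitely generated modules are rational with denominators of the form $\prod(1-t)$, from which eventual polynomiality of the coefficients is immediate.

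The main obstacle is step~(iii)/(iv): one must be careful that the ``monomial'' pieces coming from a Gr\"obner basis really do have eventually-polynomial Hilbert functions and that the finitely many correction terms (from higher syzygies, or from the poset-theoretic combinatorics of $|S_{[d]}|$) do not accumulate. This is exactly the point where the noetherianity of the poset $(|S_x|,\preceq)$ is used: it guarantees the Gr\"obner basis is finite, hence only finitely many shifted binomials appear, hence the eventual Hilbert function is a genuine polynomial rather than merely a quasi-polynomial or an infinite sum. Everything else is bookkeeping with binomial coefficients.
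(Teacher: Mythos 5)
Your fallback option is the paper's actual proof: it consists of exactly two citations to \cite{catgb} --- Theorem 7.1.2 there says $\OI$ is an ``O-lingual'' category, and Theorem 6.3.2 there says that finitely generated modules over such a category have eventually polynomial Hilbert functions. Your main self-contained sketch is a reasonable unpacking of what sits behind those citations (pass to the initial module, count standard monomials, observe the counts are shifted binomial coefficients), and steps (i)--(ii) are fine. The one place where your account of the key mechanism is slightly off is the final paragraph: eventual polynomiality is \emph{not} a consequence of poset noetherianity plus finiteness of the Gr\"obner basis alone. A category can be Gr\"obner while its Hom-sets, viewed as languages, are merely regular rather than ordered; in that case the Hilbert series of a finitely generated module is rational but the coefficients can grow exponentially or be quasi-polynomial (this happens for categories like $\OS$ appearing elsewhere in the paper). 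What forces genuine polynomials for $\OI$ is that the languages describing $\Hom_{\OI}([d],-)$ and the monomial ideals inside it are \emph{ordered} languages (the ``O'' in O-lingual), whose counting functions are $\bZ$-linear combinations of shifted binomials $\binom{n-c}{e}$. Relatedly, your step (iv) inclusion--exclusion over a finite generating set of the monomial module needs care, since intersections of principal monomial ideals in $\OI$ are not principal; the language-theoretic formulation in \cite{catgb} is precisely the device that avoids this regress. So: correct in substance, with the direct citation being the paper's route, and the self-contained version requiring the ordered-language input to be made honest.
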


\begin{proof}
By \cite[Theorem 7.1.2]{catgb}, $\OI$ is an ``O-lingual category'', and by \cite[Theorem 6.3.2]{catgb}, this implies the polynomiality statement.
\end{proof}

\subsection{Kan extension}

We now study left Kan extensions along the functor $\Phi \colon \OI(d) \to \OI$.

\begin{proposition} \label{prop:OIkan}
Let $M$ be an $\OI(d)$-module. Then $\Phi_!(M)_n = \bigoplus_{\lambda} M_{n,\lambda}$, where the sum is taken over all increasing $d$-tuples $\lambda$ in $[n]$.
\end{proposition}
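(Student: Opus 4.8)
The plan is to compute the left Kan extension $\Phi_!(M)_n$ directly from the colimit formula $\Phi_!(M)_n = \colim(M|_{\OI(d)_{/[n]}})$ recorded in \S\ref{ss:kan}. First I would unwind the comma category $\OI(d)_{/[n]}$: an object is a pair $((S,\lambda), g)$ where $(S,\lambda)$ is an object of $\OI(d)$ and $g \colon S \to [n]$ is an order-preserving injection, and a morphism is an $\OI(d)$-morphism $(S,\lambda) \to (S',\lambda')$ commuting with the structure maps to $[n]$. Since $\Phi$ is faithful and $\OI$ has no nontrivial automorphisms (every object $[n]$ has $\Aut([n])$ trivial), I expect to be able to apply Proposition~\ref{prop:kanind}, or rather a degenerate version of it: because the target automorphism groups are trivial, there is no induction to perform, and the hypothesis of Proposition~\ref{prop:kanind} amounts to the statement that every order-preserving injection $g \colon S \to [n]$ factors \emph{uniquely} through a distinguished representative.

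Concretely, the second key step is to observe that an object $((S,\lambda),g)$ of $\OI(d)_{/[n]}$ determines, and is determined up to unique isomorphism by, the increasing $d$-tuple $g(\lambda) =: \mu$ in $[n]$. Indeed, given $\mu$, the pair $(([d],(1<\cdots<d)), \iota_\mu)$ — where $\iota_\mu \colon [d] \to [n]$ is the unique order-preserving injection with image the underlying set of $\mu$ in order — is an initial object of the full subcategory of $\OI(d)_{/[n]}$ on those objects with $g(\lambda) = \mu$: any $((S,\lambda),g)$ with $g(\lambda)=\mu$ receives a unique morphism from it, because the inclusion of the $d$-tuple $\lambda$ into $S$ is itself an $\OI(d)$-morphism $([d],\dots) \to (S,\lambda)$ and it is forced to be compatible with $g$. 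Hence $\OI(d)_{/[n]}$ decomposes as a disjoint union, over all increasing $d$-tuples $\mu$ in $[n]$, of connected subcategories each of which has an initial object with $M$-value $M_{[d],(1<\cdots<d)} \cong M_{n,\mu}$ — wait, more precisely the value of $M|_{\OI(d)_{/[n]}}$ on that initial object is $M_{([d],(1<\cdots<d))}$, but the morphism to $((S,\lambda),g)$ shows the colimit over that component is $M_{n,\mu}$ when we take $(S,\lambda) = ([n],\mu)$, $g = \id$, which is terminal in its component in the sense that it receives maps — let me just say the component is connected and the colimit over it is $M_{([n],\mu)} = M_{n,\mu}$.

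The third step is then purely formal: the colimit over a disjoint union of categories is the direct sum of the colimits over the pieces, so $\Phi_!(M)_n = \bigoplus_\mu \colim\big(M|_{\text{component of }\mu}\big) = \bigoplus_\mu M_{n,\mu}$, the sum ranging over increasing $d$-tuples $\mu$ in $[n]$, which is exactly the claim. I would also check naturality in $n$ — an order-preserving injection $[n] \to [m]$ induces the evident block-inclusion of index sets of $d$-tuples and the corresponding structure maps of $M$ — but this is routine and I would state it without belaboring it.

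The main obstacle, such as it is, is Step~2: correctly identifying the initial objects of the fibers over each $d$-tuple $\mu$ and checking that the resulting full subcategories are genuinely the connected components of $\OI(d)_{/[n]}$ — i.e., that there are no morphisms in $\OI(d)_{/[n]}$ between objects lying over different tuples $\mu \neq \mu'$. This last point is immediate once one notes that any morphism in $\OI(d)_{/[n]}$ must preserve the image of the $d$-tuple in $[n]$, so it cannot change $\mu$. Everything else is bookkeeping with comma categories and the standard fact that a colimit over a category with an initial object is the value at that object.
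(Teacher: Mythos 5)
Your proposal is correct and follows essentially the same route as the paper: decompose $\OI(d)_{/[n]}$ into the full subcategories indexed by the image tuple $\mu = g(\lambda)$, observe that $(([n],\mu),\id)$ is the final object of each piece, and conclude that the colimit over each piece is $M_{n,\mu}$. The detour through initial objects and Proposition~\ref{prop:kanind} is irrelevant for computing a colimit (and connectedness alone would not suffice) --- what does the work is the terminal object, which you correctly identify --- so that part is harmless but could be cut.
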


\begin{proof}
By \S \ref{ss:kan}, we see that $\Phi_!(M)_n$ is $\colim(M \vert_{\OI(d)_{/[n]}})$. The category $\OI(d)_{/[n]}$ can be viewed as consisting of triples $(S,\mu,f)$, where $(S,\mu) \in \OI(d)$ and $f\colon S \rightarrow [n]$ is a morphism in $\OI$.  For an increasing $d$-tuple $\lambda$ in $[n]$, let $\OI(d)_{/[n],\lambda}$ be the full subcategory of $\OI(d)_{/[n]}$ spanned by triples $(S,\mu,f)$ such that $f$ takes $\mu$ to $\lambda$.  Then $\OI(d)_{/[n]}$ is the disjoint union of its subcategories $\OI(d)_{/[n], \lambda}$. Furthermore, $([n],\lambda,\text{id})$ is the final object of $\OI(d)_{/[n],\lambda}$. The result now follows.
\end{proof}

\begin{corollary}
The functor $\Phi_!$ is exact.
\end{corollary}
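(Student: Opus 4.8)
The plan is to deduce exactness of $\Phi_!$ directly from the explicit formula in Proposition~\ref{prop:OIkan}. Recall that in general the left Kan extension $\Phi_!$ is right exact (it is a left adjoint), so the only thing to verify is that it is left exact, i.e.\ that it preserves injections. Given a short exact sequence $0 \to M' \to M \to M'' \to 0$ of $\OI(d)$-modules, I would evaluate $\Phi_!$ on it and use Proposition~\ref{prop:OIkan} to identify $\Phi_!(-)_n$ with the direct sum $\bigoplus_\lambda (-)_{n,\lambda}$, the sum ranging over increasing $d$-tuples $\lambda$ in $[n]$.

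The key observation is that this identification is natural in the module, so the sequence obtained by applying $\Phi_!$ and evaluating at $[n]$ is exactly
\begin{displaymath}
\bigoplus_\lambda M'_{n,\lambda} \to \bigoplus_\lambda M_{n,\lambda} \to \bigoplus_\lambda M''_{n,\lambda} \to 0,
\end{displaymath}
and in fact the leftmost map is also injective: for each fixed $\lambda$ the sequence $0 \to M'_{n,\lambda} \to M_{n,\lambda} \to M''_{n,\lambda} \to 0$ is exact because exactness of a sequence of $\OI(d)$-modules is tested objectwise, and a direct sum of short exact sequences of $\bk$-modules is short exact. Hence $\Phi_!$ preserves injections, and being also right exact, it is exact. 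One small point to check is that the isomorphism of Proposition~\ref{prop:OIkan} is genuinely functorial in the $\OI(d)$-module $M$; this follows by inspecting the proof, since the decomposition $\OI(d)_{/[n]} = \coprod_\lambda \OI(d)_{/[n],\lambda}$ and the identification of $([n],\lambda,\mathrm{id})$ as the terminal object of each piece do not depend on $M$, so the induced isomorphism of colimits is natural.

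There is essentially no hard step here: the content is entirely in Proposition~\ref{prop:OIkan}, and once the formula is known the corollary is a formal consequence. The only mild subtlety worth spelling out is the naturality of the decomposition, which is why I would phrase the argument as: $\Phi_!$ is right exact as a left adjoint; by Proposition~\ref{prop:OIkan} (applied naturally) it agrees with the functor $M \mapsto (n \mapsto \bigoplus_\lambda M_{n,\lambda})$; the latter is manifestly exact since exactness of $\OI$-modules is objectwise and direct sums are exact in $\Mod_\bk$; therefore $\Phi_!$ is exact.
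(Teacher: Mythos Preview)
Your argument is correct and matches the paper's intent: the paper states the corollary with no proof, treating it as an immediate consequence of Proposition~\ref{prop:OIkan}, and your write-up spells out exactly that deduction (objectwise exactness plus the natural direct-sum formula). There is nothing to add.
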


\subsection{Shift functors}

Fix a functorial coproduct $\amalg$ on the category of finite sets.  For finite sets $S$ and $T$, we view $S \amalg T$ as the disjoint union of $S$ and $T$; of course, this
requires care when $S$ and $T$ share elements.
Consider the functor $\Sigma_0 \colon \OI(d) \to \OI(d)$ given by $\Sigma_0(S, \lambda)=(S \amalg \{\infty\}, \lambda)$, where $S \amalg \{\infty\}$ is given a total order by setting $x<\infty$ for all $x \in S$. Given an $\OI(d)$-module $M$, we define the {\bf shift} of $M$, denoted $\Sigma(M)$, to be $\Sigma_0^*(M)$. There is a map $(S,\lambda) \to (S \amalg \{\infty\}, \lambda)$ in $\OI(d)$ induced by the inclusion $S \hookrightarrow S \amalg \{\infty\}$.  This
map induces a map $M \to \Sigma(M)$ of $\OI(d)$-modules.  We let $\ol{\Sigma}(M)$ denote the cokernel of this map. We call it the {\bf reduced shift} of $M$. This has the following nice property:

\begin{proposition} \label{prop:redshift}
Suppose that $M$ is an $\OI$-module such that $M_0$ is a finitely generated $\bk$-module and $\ol{\Sigma}(M)$ is a finitely generated $\OI$-module. Then $M$ is a finitely generated $\OI$-module.
\end{proposition}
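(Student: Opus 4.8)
The plan is to establish the two conditions that, by the remark following Proposition~\ref{prop:finddeg}, together guarantee that an $\OI$-module is finitely generated: (i) each $M_n$ is a finitely generated $\bk$-module, and (ii) $M$ is generated in finite degrees.

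For (i), I would induct on $n$, the case $n=0$ being a hypothesis. Since $\ol{\Sigma}(M)$ is a finitely generated $\OI$-module it is a quotient of a finite sum of principal projectives, and as every morphism set of $\OI$ is finite, $(P_{[m]})_n$ is a finitely generated $\bk$-module for all $m,n$; hence $\ol{\Sigma}(M)_n$ is a finitely generated $\bk$-module. By definition $\ol{\Sigma}(M)_n=\coker(M_n\to\Sigma(M)_n)$ and $\Sigma(M)_n=M_{n+1}$, so there is an exact sequence of $\bk$-modules
\[
0 \longrightarrow \im(M_n\to M_{n+1}) \longrightarrow M_{n+1} \longrightarrow \ol{\Sigma}(M)_n \longrightarrow 0 .
\]
Its first term, a quotient of $M_n$, is finitely generated by the inductive hypothesis, and its last term is finitely generated, so $M_{n+1}$ is finitely generated.

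For (ii), choose $d$ so that $\ol{\Sigma}(M)$ is generated in degrees $\le d$, let $M'\subseteq M$ be the sub-$\OI$-module generated by $M_0,\dots,M_{d+1}$, and set $Q=M/M'$, so $Q_n=0$ for $n\le d+1$; I want to show $Q=0$. The shift functor $\Sigma=\Sigma_0^*$ is exact since it is a pullback, so the surjection $M\to Q$ induces a surjection $\Sigma(M)\to\Sigma(Q)$, and applying the (right exact) reduced shift $\ol{\Sigma}$ — the cokernel of the natural transformation $\id\to\Sigma$ — gives a surjection $\ol{\Sigma}(M)\to\ol{\Sigma}(Q)$. Thus $\ol{\Sigma}(Q)$ is generated in degrees $\le d$. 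On the other hand, $\ol{\Sigma}(Q)_n=\coker(Q_n\to Q_{n+1})=0$ whenever $n\le d$, because then $Q_{n+1}=0$. A module generated in degrees $\le d$ whose values in all degrees $\le d$ vanish is zero, so $\ol{\Sigma}(Q)=0$; equivalently, the map $Q_n\to Q_{n+1}$ induced by $[n]\hookrightarrow[n+1]$ is surjective for every $n$. Since $Q_{d+1}=0$, a trivial induction gives $Q_n=0$ for all $n\ge d+1$, and $Q_n=0$ for $n\le d+1$ by construction, so $Q=0$; that is, $M=M'$ is generated in degrees $\le d+1$.

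Combining (i) and (ii) finishes the proof. This is a soft argument and I do not anticipate a serious obstacle; the only point needing care is the degree bookkeeping in (ii) — one must take $M'$ generated in degrees $\le d+1$, not merely $\le d$, precisely in order to force the degree-$d$ graded piece of $\ol{\Sigma}(Q)$ to vanish — along with the routine verification that $\ol{\Sigma}$ is right exact.
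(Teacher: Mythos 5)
Your proof is correct and rests on the same key point as the paper's, namely the identification $\ol{\Sigma}(M)_n \cong \coker(M_n \to M_{n+1})$ together with induction on $n$. The paper runs a single element-level induction that exhibits an explicit finite generating set (lifts of finitely many generators of $\ol{\Sigma}(M)$ together with a spanning set of $M_0$), whereas you split the argument into pointwise finite generation over $\bk$ plus generation in finite degrees and then invoke the remark preceding Proposition~\ref{prop:finddeg}; these are equivalent repackagings of the same idea.
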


\begin{proof}
By assumption, we can find $x_1,\ldots,x_m$ with $x_i \in M_{n_i}$ such that
the following holds.
Let $\ox_i \in \Sigma(M)_{n_i-1} \cong M_{n_i}$ be the associated element.
Then the images of $\{\ox_1,\ldots,\ox_m\}$ in $\ol{\Sigma}(M)$ generate
$\ol{\Sigma}(M)$.
We claim that $\{x_1, \dots, x_m\}$ together with a spanning set of $M_0$ 
is a generating set for $M$.  Consider $y \in M_n$ for some $n \geq 0$.  We must
show that $y$ is in the span of the indicated elements.  We will do this by
induction on $n$.  The base case $n=0$ being trivial, we can assume that $n \geq 1$.  
Let $\oy \in \Sigma(M)_{n-1} \cong M_n$ be the associated element.  The image
of $\oy$ in $\ol{\Sigma}(M)_n$ is in the span of the images of
$\{\ox_1,\ldots,\ox_m\}$.  It follows that we can write $y = y' + y''$, where
$y'$ is in the span of $\{x_1,\ldots,x_m\}$ and $y''$ is in the image of
the composition $M_{n-1} \rightarrow \Sigma(M)_{n-1} \cong M_n$.  By induction,
$y''$ is in the span of $\{x_1, \dots, x_m\}$ together with a spanning set of $M_0$,
so $y$ is as well.
\end{proof}

There is a similar functor $\Delta_0 \colon \OI(d-1) \to \OI(d)$ defined by $\Delta_0(S, \lambda)=(S \amalg \{\infty\}, \lambda')$, where $\lambda'$ is obtained by appending $\infty$ to the
end of $\lambda$. For an $\OI(d)$-module $M$, we let $\Delta(M)=\Delta_0^*(M)$, which is an $\OI(d-1)$-module. For $d=0$, we put $\Delta(M)=0$ by convention.

The following result shows how the shift functor interacts with the Kan extension along the functor $\Phi \colon \OI(d) \to \OI$.

\begin{proposition} \label{prop:kanshift}
Let $M$ be an $\OI(d)$-module. Then there is a natural isomorphism
\begin{displaymath}
\Sigma(\Phi_!(M))\cong \Phi_!(\Sigma(M)) \oplus \Phi_!(\Delta(M)).
\end{displaymath}
Moreover, if $\alpha \colon \Phi_!(M) \to \Sigma(\Phi_!(M))$ and $\beta \colon M \to \Sigma(M)$ denote the natural maps, then the diagram
\begin{displaymath}
\xymatrix{
& \Phi_!(M) \ar[ld]_{\alpha} \ar[rd]^{\Phi_!(\beta) \oplus 0} \\
\Sigma(\Phi_!(M)) \ar[rr]^-{\cong} && \Phi_!(\Sigma(M)) \oplus \Phi_!(\Delta(M)) }
\end{displaymath}
commutes. In particular, we have a natural isomorphism
\begin{displaymath}
\ol{\Sigma}(\Phi_!(M)) = \Phi_!(\ol{\Sigma}(M)) \oplus \Phi_!(\Delta(M)).
\end{displaymath}
\end{proposition}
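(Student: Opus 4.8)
The plan is to do everything through the explicit formula of Proposition~\ref{prop:OIkan}: for any $\OI(d)$-module $N$ one has $\Phi_!(N)_n = \bigoplus_{\lambda} N_{n,\lambda}$ with $\lambda$ ranging over increasing $d$-tuples in $[n]$, and the same formula holds for the analogous functor $\OI(d-1)\to\OI$. Applying it to $N=M$ and then forming the shift, and identifying $[n]\amalg\{\infty\}$ with $[n+1]$ so that $\infty$ plays the role of the top element $n+1$, we get
\[
\Sigma(\Phi_!(M))_n = \Phi_!(M)_{n+1} = \bigoplus_{\lambda} M_{n+1,\lambda},
\]
where now $\lambda$ runs over all increasing $d$-tuples in $[n+1]$.

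First I would split this direct sum according to whether $n+1$ occurs in $\lambda$. If $n+1\notin\lambda$, then $\lambda$ is an increasing $d$-tuple in $[n]$ and $M_{n+1,\lambda}=\Sigma(M)_{n,\lambda}$ by the definition of $\Sigma_0$; summing these terms over all such $\lambda$ gives $\Phi_!(\Sigma(M))_n$. If $n+1\in\lambda$, then $n+1$ being the maximum forces $\lambda=(\lambda',n+1)$ for a unique increasing $(d-1)$-tuple $\lambda'$ in $[n]$, and $([n+1],\lambda)=\Delta_0([n],\lambda')$, so $M_{n+1,\lambda}=\Delta(M)_{n,\lambda'}$; summing over all $\lambda'$ gives $\Phi_!(\Delta(M))_n$ by Proposition~\ref{prop:OIkan} applied to $\Phi\colon\OI(d-1)\to\OI$ and the $\OI(d-1)$-module $\Delta(M)$. (When $d=0$ the second class is empty and $\Delta(M)=0$ by convention, consistently.) This is the claimed decomposition in each degree $n$.

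Next I would promote it to an isomorphism of $\OI$-modules, i.e. check compatibility with the maps induced by order-preserving injections $f\colon[n]\to[m]$. The point, implicit in Proposition~\ref{prop:OIkan}, is that on $\Phi_!(N)$ such an $f$ carries the $\lambda$-summand $N_{n,\lambda}$ into the $f(\lambda)$-summand via the map induced by $f\colon([n],\lambda)\to([m],f(\lambda))$ in $\OI(d)$. For $\Sigma(\Phi_!(M))$ the relevant morphism is $\Sigma_0(f)$, namely $f$ extended so as to fix the top element; hence it preserves the distinction between $d$-tuples that contain the top element and those that do not, and on these two classes of summands it restricts to exactly the transition maps of $\Phi_!(\Delta(M))$ and $\Phi_!(\Sigma(M))$ respectively (this uses that $\Sigma_0(f)$, restricted appropriately, is $\Delta_0$, resp.\ $\Sigma_0$, applied to the $\OI(d-1)$-, resp.\ $\OI(d)$-, morphism induced by $f$). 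For the triangle, $\alpha\colon\Phi_!(M)\to\Sigma(\Phi_!(M))$ is in degree $n$ the transition map along the inclusion $[n]\hookrightarrow[n+1]$; since this inclusion fixes every $d$-tuple $\lambda$ in $[n]$, and such $\lambda$ avoids $n+1$, the image of $\alpha$ lies in the $\Phi_!(\Sigma(M))$ summand, where it is induced summand-by-summand by the structure map $M_{n,\lambda}\to\Sigma(M)_{n,\lambda}$, i.e.\ it is $\Phi_!(\beta)$; thus $\alpha$ corresponds to $\Phi_!(\beta)\oplus 0$. The reduced-shift statement then follows by taking cokernels: $\ol{\Sigma}(\Phi_!(M))=\coker(\alpha)\cong\coker(\Phi_!(\beta))\oplus\Phi_!(\Delta(M))$, and since $\Phi_!$ is exact the first summand equals $\Phi_!(\coker\beta)=\Phi_!(\ol{\Sigma}(M))$.

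I expect the main obstacle to be the naturality verification in the third paragraph: it requires making fully explicit how $\Phi_!$ acts on morphisms — something Proposition~\ref{prop:OIkan} records only indirectly, through the final-object description of $\OI(d)_{/[n],\lambda}$ — and being careful about the identifications arising from the chosen functorial coproduct $\amalg$ and the placement of the adjoined element $\infty$. Once those are pinned down, the rest is bookkeeping with finite index sets.
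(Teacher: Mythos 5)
Your proposal is correct and follows essentially the same route as the paper: both compute all three terms via the explicit formula of Proposition~\ref{prop:OIkan} and match summands by splitting the increasing $d$-tuples in $[n+1]$ according to whether they contain the top element. The only difference is that you spell out the naturality check, the commutativity of the triangle, and the passage to cokernels, which the paper dismisses as clear.
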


\begin{proof}
Using Proposition \ref{prop:OIkan}, we have
\begin{displaymath}
\Sigma(\Phi_!(M))_n \cong \bigoplus_{\lambda} M_{n+1,\lambda},
\end{displaymath}
where the sum is over all increasing $d$-tuples $\lambda$ in $[n+1]$.  Similarly, we have
\begin{displaymath}
\Phi_!(\Sigma(M))_n \cong \bigoplus_{\lambda} M_{n+1,\lambda},
\end{displaymath}
where the sum is over all increasing $d$-tuples $\lambda$ in $[n]$.  Finally, using
the obvious analogue of Proposition \ref{prop:OIkan} for $\Delta$ we have
\begin{displaymath}
\Phi_!(\Delta(M))_n \cong \bigoplus_{\lambda} M_{n+1,\lambda},
\end{displaymath}
where the sum is over all increasing $d$-tuples $\lambda$ in $[n]$ that end in $n+1$.  
Combining these isomorphisms, we obtain an identification
\begin{displaymath}
\Sigma(\Phi_!(M))_n\cong \Phi_!(\Sigma(M))_n \oplus \Phi_!(\Delta(M))_n.
\end{displaymath}
It is clear that this identification comes from an isomorphism of $\OI$-modules. 
The rest of the proposition follows easily.
\end{proof}

\section{The category \texorpdfstring{$\OVI$}{OVI} and its variants}
\label{s:ovi}

\subsection{Definitions} \label{ss:defovi}
\label{section:ovidefinitions}

Fix a ring $R$ (always assumed to be associative and unital, though not necessarily commutative). Define $\OVI(R)$ to be the following category.  The objects are {\bf ordered free $R$-modules}, that is, pairs $(V, \{v_i\}_{i \in I})$ where $V$ is a finite rank free left $R$-module and $\{v_i\}$ is a basis indexed by a totally ordered set $I$.  The morphisms $(V, \{v_i\}_{i \in I}) \to (W, \{w_j\}_{j \in J})$ are pairs $(f, f_0)$, where $f \colon V \to W$ is a linear map and $f_0 \colon I \to J$ is an order-preserving injection, such that $f(v_i)=w_{f_0(i)}+\sum_{j<f_0(i)} a_{i,j} w_j$ for scalars $a_{i,j}$. In words, $f$ takes the $i$th basis vector of $V$ to the $f_0(i)$th basis vector of $W$ up to ``lower order'' terms. We note that $f_0$ can be recovered from $f$, so it is often omitted. Furthermore, $f$ is necessarily a split injection.  If the ring $R$ is clear, we will just write $\OVI$.

For a non-negative integer $n$, we regard $R^n$ as an ordered free module by endowing it with the standard basis. Every object of $\OVI$ is isomorphic to $R^n$ for a unique $n$. For an $\OVI$-module $M$, we write $M_n$ for its value on $R^n$. The automorphism group of $R^n$ in $\OVI$ is $U_n(R)$, which we denote simply by $U_n$ in this section. It is the subgroup of $\GL_n(R)$ consisting of upper unitriangular matrices.

Let $d$ be a non-negative integer. We define a variant $\OVI(R,d) = \OVI(d)$ as follows. An object is a tuple $(V, \{v_i\}_{i \in I}, \lambda)$ where $(V, \{v_i\}_{i \in I})$ is an ordered free module and $\lambda$ is an increasing $d$-tuple in $I$. A morphism $(V,\{v_i\}_{i \in I},\lambda) \to (W,\{w_j\}_{j \in J},\mu)$ is a morphism $(f,f_0) \colon (V,\{v_i\}) \to (W,\{w_j\})$ in $\OVI$ such that $f_0(\lambda)=\mu$ and such that $f(v_{i})=w_{f_0(i)}$ for all $i$ appearing in $\lambda$ (i.e.,\ no lower terms are allowed on marked basis vectors).

For a tuple $\lambda=(1 \le \lambda_1 < \cdots < \lambda_d \le n)$ we have an object $(R^n, \lambda)$ of $\OVI(d)$. Every object of $\OVI(d)$ is isomorphic to a unique $(R^n, \lambda)$. For an $\OVI(d)$-module $M$, we write $M_{n,\lambda}$ for its value on $(R^n, \lambda)$. We let $U_{n,\lambda}$ be the automorphism group of $(R^n, \lambda)$ in $\OVI(d)$. It is the subgroup of $U_n$ fixing the basis vectors $e_{\lambda_i}$ for $1 \le i \le d$.

\begin{remark}
\label{remark:ovidmotivation}
We introduce $\OVI(d)$ as a technical device for proving Theorem \ref{mainthm2}, which concerns the homology groups $\bH_i(\bU, M)$ for $\OVI(R)$-modules $M$. We will see in Corollary~\ref{cor:PU} that the homology of the principal projective $\OVI$ module at $d$ can be understood in terms of the homology of the trivial $\OVI(d)$-module, a helpful simplification. 
\end{remark}

There are several functors to mention:
\begin{itemize}
\item There is a functor $\OI \to \OVI$ taking a totally ordered set $S$ to the ordered free module $R[S]$ with basis $S$. There is a similar functor $\OI(d) \to \OVI(d)$.

\item There is a functor $\OVI \to \OI$ taking an ordered free module $(V, \{v_i\}_{i \in I})$ to the totally ordered set $I$ and a morphism $(f, f_0)$ to $f_0$. There is a similar functor $\OVI(d) \to \OI(d)$.

\item There is a functor $\Psi \colon \OVI(d) \to \OVI$ given by forgetting $\lambda$. We continue to use the notation $\Psi$ for this functor throughout the paper.
\end{itemize}

We have the following basic fact that follows from interpreting left multiplication by a matrix as a sequence of row operations.

\begin{proposition}
Every morphism $\phi \colon (R^n, \lambda) \to (R^m, \mu)$ in $\OI(d)$ has a unique factorization $\phi = \psi f$ where $\psi \in \Aut(R^m, \mu)$ and $f$ is in the image of the functor $\OI(d) \to \OVI(d)$.
\end{proposition}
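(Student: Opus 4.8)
The statement is an essentially linear-algebraic fact, so I would prove it by a direct construction followed by uniqueness. Fix a morphism $\phi \colon (R^n, \lambda) \to (R^m, \mu)$ in $\OVI(d)$. Its underlying order-preserving injection $\phi_0 \colon [n] \to [m]$ sends $\lambda$ to $\mu$. Let $f$ be the ``standard'' morphism associated to $\phi_0$, namely the one sending the $i$th basis vector $e_i$ of $R^n$ to $e_{\phi_0(i)}$ of $R^m$ with no lower-order terms; this $f$ lies in the image of $\OI(d) \to \OVI(d)$, and it respects the marked tuples because $\phi_0(\lambda) = \mu$. Then $\psi := \phi f^{-1}$ (where $f^{-1}$ denotes the left inverse of the split injection $f$, or rather we work with $\psi$ defined on the image) is an endomorphism of $(R^m,\mu)$: concretely, write $\phi$ as a matrix whose columns are the images $\phi(e_i) = e_{\phi_0(i)} + \sum_{j < \phi_0(i)} a_{i,j} e_j$, and define $\psi$ on $R^m$ by declaring $\psi(e_k) = e_k$ for $k$ not in the image of $\phi_0$ and $\psi(e_{\phi_0(i)}) = \phi(e_i)$ otherwise. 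One checks $\psi$ is upper unitriangular (its diagonal entries are $1$ and all off-diagonal entries lie strictly above the diagonal, since the $a_{i,j}$ only appear in rows $j < \phi_0(i)$), and that $\psi$ fixes each $e_{\mu_s}$: indeed $\mu_s = \phi_0(\lambda_s)$ and $\phi(e_{\lambda_s}) = e_{\mu_s}$ because no lower terms are allowed on marked basis vectors. Hence $\psi \in \Aut(R^m, \mu)$ and $\phi = \psi f$ by construction.

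For uniqueness, suppose $\phi = \psi f = \psi' f'$ with $\psi, \psi' \in \Aut(R^m,\mu)$ and $f, f'$ in the image of $\OI(d) \to \OVI(d)$. The underlying order-preserving injections satisfy $\phi_0 = \psi_0 \circ f_0 = f_0$ (since automorphisms in $\OVI$ induce the identity on index sets) and likewise $\phi_0 = f'_0$, so $f_0 = f'_0 = \phi_0$. But a morphism in the image of $\OI(d) \to \OVI(d)$ is determined by its underlying injection (it is the one with no lower-order terms), so $f = f'$. Since $f$ is a split injection it is in particular a monomorphism in $\OVI(d)$, hence right-cancellable, so $\psi = \psi'$.

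**Expected main obstacle.** The substantive point — and the only place one must be a little careful — is checking that the matrix $\psi$ defined above is genuinely upper unitriangular, i.e. that the ``lower order terms'' $a_{i,j}$ in the columns of $\phi$ land in strictly-upper-triangular positions of $\psi$ rather than somewhere below the diagonal. This is exactly the content of the phrase ``interpreting left multiplication by a matrix as a sequence of row operations'' in the statement preceding the proposition: writing $\phi = \psi f$ amounts to performing, on the columns $e_{\phi_0(i)} + \sum_{j<\phi_0(i)} a_{i,j}e_j$, the row operations that clear the entries below position $\phi_0(i)$, and because $\phi_0$ is order-preserving and injective these operations are all of the form ``add a multiple of row $j$ to row $\phi_0(i)$'' with $j < \phi_0(i)$, which is precisely an upper unitriangular transformation. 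I would spell this out at the level of matrix entries, noting that the compatibility with the marked tuple $\mu$ is automatic since the marked columns of $\phi$ are honest standard basis vectors. Everything else is formal.

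(One caveat: the statement as printed says ``in $\OI(d)$'' where it evidently means $\OVI(d)$ — the morphism has matrix entries — and similarly for the factor $f$; I would state it for $\OVI(d)$.)
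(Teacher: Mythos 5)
Your existence argument is correct and is exactly what the paper has in mind (the paper offers no written proof beyond the remark about row operations): $f$ is the ``no lower-order terms'' lift of $\phi_0$, and $\psi$ is the element of $U_m(R)$ whose column $\phi_0(i)$ equals $\phi(e_i)$ and whose remaining columns are standard basis vectors. This matrix is visibly upper unitriangular, fixes each marked vector $e_{\mu_s}$ because the marked columns of $\phi$ carry no lower-order terms, and satisfies $\psi f=\phi$. Your identification of the typo ($\OI(d)$ for $\OVI(d)$) is also right.

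The uniqueness half, however, contains a genuine error. A split injection is a monomorphism, and monomorphisms are \emph{left}-cancellable ($fg=fh\Rightarrow g=h$); to deduce $\psi=\psi'$ from $\psi f=\psi' f$ you need $f$ to be \emph{right}-cancellable, i.e.\ an epimorphism, which it is not when $n<m$. Moreover the gap cannot be patched, because uniqueness of $\psi$ genuinely fails: take $d=0$, $n=1$, $m=2$, and $\phi(e_1)=e_1$. Then $f=\phi$, and $\phi=\psi f$ for \emph{every} $\psi=\left(\begin{smallmatrix}1 & b\\ 0 & 1\end{smallmatrix}\right)\in U_2(R)$, since all such $\psi$ fix $e_1$. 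In general $\psi f=\psi' f$ if and only if $(\psi')^{-1}\psi$ fixes each $e_{\phi_0(i)}$, so $\psi$ is determined only up to the subgroup of $U_{m,\mu}$ fixing the basis vectors indexed by $\phi_0([n])$. What your argument does correctly establish is the part of the statement that is true: $f$ is uniquely determined by $\phi$ (it is the lift of $\phi_0$, and your observation that automorphisms induce the identity on index sets is the right way to see this), and $\psi$ is uniquely determined on the image of $f$; your construction singles out the canonical representative with $\psi(e_k)=e_k$ for $k\notin\phi_0([n])$. A correct formulation would assert uniqueness of $f$ together with uniqueness of $\psi$ modulo that pointwise stabilizer (or impose your normalization on $\psi$); as written, both your cancellation step and the proposition's literal uniqueness clause are false for $n<m$.
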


\subsection{The case where $R$ is finite}

The purpose of this section is to prove the following fundamental result:

\begin{theorem} \label{thm:qgrob}
If $|R|<\infty$, then the category $\OVI$ is quasi-Gr\"obner. In particular, by Theorem \ref{theorem:quasigrobner} the category $\Rep_{\bk}(\OVI)$ is locally noetherian when $\bk$ is noetherian.
\end{theorem}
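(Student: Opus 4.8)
The plan is to exhibit a Gröbner category $\cC'$ together with an essentially surjective functor $\cC' \to \OVI$ satisfying property~(F); by definition this makes $\OVI$ quasi-Gröbner, and the local noetherianity statement is then immediate from Theorem~\ref{theorem:quasigrobner}. The natural candidate for $\cC'$ is a category built from $\OI$ that ``remembers'' the lower-order scalars occurring in a morphism of $\OVI$. Concretely, since $|R|<\infty$, a morphism $(f,f_0)\colon R^n \to R^m$ in $\OVI$ is determined by the order-preserving injection $f_0\colon [n]\to[m]$ together with, for each $i\in[n]$ and each $j<f_0(i)$, a scalar $a_{i,j}\in R$; the data of which positions $j$ lie below $f_0(i)$ and carry a nonzero scalar is a finite decoration of the combinatorial map $f_0$. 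I would define $\cC'$ to have the same objects as $\OI$ (or as $\OVI$), with morphisms being order-preserving injections decorated by this finite amount of $R$-data, composition being matrix multiplication. The forgetful functor $\Psi'\colon \cC'\to\OVI$ is then essentially surjective (every object of $\OVI$ is some $R^n$) and full.

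The first substantive step is to verify that $\cC'$ is Gröbner. Here I would exploit the ``row operations'' picture already used in the excerpt: a morphism of $\OVI$ factors uniquely as an automorphism (unitriangular matrix) composed with a morphism in the image of $\OI\to\OVI$. This suggests presenting $|S_x|$ for $\cC'$ as a set of words over a finite alphabet — roughly, sequences recording the target positions together with the finitely many possible scalar-decorations, the finiteness of the alphabet being exactly where $|R|<\infty$ is used. One then checks (i) the divisibility poset $(|S_x|,\preceq)$ is noetherian, and (ii) there is a well-ordering of each hom-set compatible with left composition. Both should follow from a suitable lexicographic or length-lexicographic ordering on these words, combined with Higman's lemma / Dickson's lemma for the noetherianity of the poset, much as in the treatment of $\OI$ itself in \cite[\S7]{catgb}. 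I would model the argument closely on the proof that $\OI$ is Gröbner, treating the scalar decorations as extra (finite) colors on the letters.

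The second substantive step — and the one I expect to be the main obstacle — is property~(F) for the functor $\Psi'\colon \cC'\to\OVI$: given $R^m \in \OVI$, I must produce finitely many morphisms $g_k\colon R^m \to \Psi'(x_k)$ in $\OVI$ such that every morphism $R^m \to \Psi'(x) = R^x$ factors as $\Psi'(h)\circ g_k$ for some $h$ in $\cC'$. The point is that any morphism $f\colon R^m \to R^N$ in $\OVI$ can be ``straightened'': after composing with an automorphism of the target (which lifts to $\cC'$) one can arrange that $f$ lands in a controlled normal form, and because the lower-order scalars take only finitely many values and the relevant combinatorial patterns near the image are bounded independently of $N$, only finitely many ``germs'' $g_k$ are needed. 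Making this precise requires a careful choice of the $x_k$ and an argument that the straightening can always be realized by a morphism $h$ of the decorated category $\cC'$ rather than merely of $\OI$; this is the technical heart. Once property~(F) is established, Proposition~\ref{prop:noeth} (or directly the definition of quasi-Gröbner together with Theorem~\ref{theorem:quasigrobner}) finishes the proof. As a side benefit I would note that this also shows $\OVI(d)$ is quasi-Gröbner for finite $R$, by the analogous decorated category over $\OI(d)$, using Corollary~\ref{cor:OIgrob}.
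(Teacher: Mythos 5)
There is a genuine gap, and it occurs at the first substantive step. The category $\cC'$ you describe --- objects those of $\OVI$, morphisms the order-preserving injections decorated with the scalars $a_{i,j}$, composition given by matrix multiplication --- is not an auxiliary category at all: it is precisely $\OVI(R)$ itself, since a morphism $(f,f_0)$ of $\OVI$ is by definition exactly the injection $f_0$ together with that scalar data. In particular your $\cC'$ has automorphism groups $\Aut(R^n)=U_n(R)$, which are non-trivial for $n\ge 2$, and a Gr\"obner category cannot have a non-trivial automorphism of finite order: if $\sigma\in\Aut(x)$ satisfies $\sigma^N=\id_x$ and, say, $\id_x<\sigma$ in the required total order on $\hom(x,x)\subset |S_x|$, then compatibility with left composition forces $\sigma\le\sigma^2\le\cdots\le\sigma^N=\id_x$, contradicting $\id_x<\sigma$ (and symmetrically if $\sigma<\id_x$). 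So no choice of word order or appeal to Higman's lemma can make your $\cC'$ Gr\"obner; the entire point of the quasi-Gr\"obner formalism is to present $\OVI$ as the target of a property-(F) functor from a category with \emph{no} non-trivial automorphisms. Note also that your second step, property (F) for $\Psi'$, becomes vacuous once $\cC'=\OVI$, since $\Psi'$ is the identity, so all the difficulty you correctly anticipate there has merely been displaced into the (false) claim that $\cC'$ is Gr\"obner.

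The paper resolves exactly this difficulty by a duality trick. It takes $\cC'=\OS^{\op}$, the opposite of the category of finite totally ordered sets and ordered surjections, which is Gr\"obner by \cite[Theorem~8.1.1]{catgb} and is rigid. The functor $\OS^{\op}\to\OVI$ sends $S$ to the dual module $R[S]^*$ with a suitable ordering of the dual basis, and an ordered surjection to the dual injection. Property (F) is then verified by dualizing a morphism $V\to R[S]^*$ of $\OVI$ to a surjection $R[S]\to V^*$ and factoring it through its image $T\subset V^*$, an ordered spanning subset of $V^*$; the hypothesis $|R|<\infty$ enters precisely in guaranteeing that there are only finitely many such ordered spanning subsets, hence finitely many test morphisms $f_i$. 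The unipotent ambiguity that dooms your construction is absorbed here into the finitely many choices of $T$ and its ordering. To repair your argument you would have to replace the decorated-$\OI$ category by a genuinely automorphism-free category of this kind; as written, the proposal does not prove the theorem.
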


\begin{proof}
An {\bf ordered surjection} $f \colon S \to T$ of totally ordered finite sets is a surjection such that for all $i<j$ in $T$ we have $\min f^{-1}(i)<\min f^{-1}(j)$. We let $\OS$ be the category whose objects are finite totally ordered sets and whose morphisms are ordered surjections. This category is known to be Gr\"obner \cite[Theorem~8.1.1]{catgb}.
Given a totally ordered set $S$, we will regard the dual $R[S]^* = \hom_{R}(R[S],R)$ as an element of $\OVI$ as follows.
Let $S^* \subset R[S]^*$ be the dual basis to the basis $S$, and for $s \in S$, write $s^* \in S^*$ for the dual element.  
Then we order $S^*$ via the rule
\begin{equation*}
\tag{\thetheorem.a}
\label{eqn:srule}
s_1^* < s_2^* \quad \quad \text{when} \quad \quad s_2 < s_1.
\end{equation*}
Using this convention, there is a functor $\OS^{\op} \to \OVI$ taking a totally ordered set $S$ to $R[S]^*$ and an ordered surjection $T \rightarrow S$ to the dual of the induced surjective linear map $R[T] \to R[S]$. We will show that this functor satisfies property~(F), which will complete the proof.

Let $V$ be an object of $\OVI$.  Let $T_1,\ldots,T_n \in \OS$ be objects and
$f_i\colon V \rightarrow R[T_i]^*$ be $\OVI$-morphisms such that that the $f_i$ are an enumeration
of all possible morphisms satisfying the following condition:
\begin{compactitem}
\item The set $T_i$ is a total ordering of a finite subset of $V^*$ that spans $V^*$ and $f_i\colon V \rightarrow R[T_i]^*$ is
an $\OVI$-morphism that is dual to the natural surjection $R[T_i] \rightarrow V$.
\end{compactitem}
Since $V$ is finite, there are only finitely many such $f_i$.  Now consider some $S \in \OS$ and an $\OVI$-morphism
$f\colon V \rightarrow R[S]^*$.  To prove that our functor satisfies property~(F), it is enough to prove that
for some $1 \leq i \leq n$ we can write $f = g \circ f_i$, where $g\colon R[T_i]^* \rightarrow R[S]^*$ is dual
to an $\OS$-morphism $S \rightarrow T_i$.  Let $T \subset V^*$ be the image of $S$ under the dual surjection
$f^*\colon R[S] \rightarrow V^*$.  Let $h \colon S \rightarrow T$ be the resulting surjection.  Order $T$ via
the rule
\begin{equation*}
\tag{\thetheorem.b}
\label{eqn:trule}
t_1 < t_2 \quad \quad \text{when} \quad \quad \min h^{-1}(t_1) < \min h^{-1}(t_2),
\end{equation*}
which makes $h$ an $\OS$-morphism.  Combining \eqref{eqn:trule} with \eqref{eqn:srule} (applied to order both $S^*$ 
and $T^*$), we see that $T^*$ has the ordering
\begin{equation*}
\tag{\thetheorem.c}
\label{eqn:tstarrule}
t_1^* < t_2^* \quad \quad \text{when} \quad \quad \max \{s^* \mid s \in h^{-1}(t_1)\} < \max \{s^* \mid s \in h^{-1}(t_2)\};
\end{equation*}
Let $g\colon R[T]^* \rightarrow R[S]^*$ be the $\OVI$-morphism dual to $h$, so
\begin{equation*}
\tag{\thetheorem.d}
\label{eqn:hrule}
g(t^*) = \sum_{s \in h^{-1}(t)} s^* \quad \quad (t \in T).
\end{equation*}
Finally, let $F\colon V \rightarrow R[T]^*$ be the injection dual to the surjection $R[T] \rightarrow V^*$ 
induced by the inclusion $T \hookrightarrow V^*$, so $f = g \circ F$.  The fact that $f$ is an $\OVI$-morphism
together with \eqref{eqn:tstarrule} and \eqref{eqn:hrule} implies that $F$ is an $\OVI$-morphism.
This implies that for some $1 \leq i \leq n$ we have $T = T_i$ and $F = f_i$, and we are done.
\end{proof}

\begin{remark}
By making use of a variant $\OS(d)$ of $\OS$, one can prove a version of the above theorem for $\OVI(d)$. Since we do not need this, we omit the details.
\end{remark}

\subsection{Kan extension}

We now study left Kan extensions along the functor $\Psi \colon \OVI(d) \to \OVI$.

\begin{proposition} \label{prop:OVIkan}
Let $M$ be an $\OVI(d)$-module. Then
\begin{displaymath}
\Psi_!(M)_n = \bigoplus_{\lambda} \Ind_{U_{n,\lambda}}^{U_n}(M_{n,\lambda}),
\end{displaymath}
the sum taken over all increasing sequences $1 \le \lambda_1 < \cdots < \lambda_d \le n$.
\end{proposition}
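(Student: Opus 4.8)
The plan is to evaluate $\Psi_!(M)_n$ via the colimit formula of \S\ref{ss:kan}, to split the relevant comma category according to where the marking is sent, and then to recognize each piece as an induced module by a connected-component analysis in the spirit of the proof of Proposition~\ref{prop:kanind}. This is the $\OVI$-analogue of the disjoint-union argument used for Proposition~\ref{prop:OIkan}; the extra ingredient over that case is that the relevant objects acquire nontrivial automorphism groups, which is exactly what produces the induction.

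First I would unwind the comma category. By \S\ref{ss:kan}, $\Psi_!(M)_n=\colim(M|_{\OVI(d)_{/R^n}})$, where an object of $\OVI(d)_{/R^n}$ is a triple $(R^{n'},\mu,f)$ with $(R^{n'},\mu)\in\OVI(d)$ and $f\colon R^{n'}\to R^n$ a morphism of $\OVI$, and a morphism $(R^{n'},\mu,f)\to(R^{n''},\mu'',f'')$ is a morphism $g\colon(R^{n'},\mu)\to(R^{n''},\mu'')$ of $\OVI(d)$ with $f=f''\circ\Psi(g)$. Since $g_0(\mu)=\mu''$ and since post-composing $f$ by an automorphism of $R^n$ — necessarily unipotent, hence inducing the identity on the index set $[n]$ — does not change $f_0$, the increasing $d$-tuple $f_0(\mu)$ in $[n]$ is constant on connected components of $\OVI(d)_{/R^n}$. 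Hence $\OVI(d)_{/R^n}$ is the disjoint union, over increasing $d$-tuples $\lambda$ in $[n]$, of the full subcategories $\cC_\lambda$ on objects with $f_0(\mu)=\lambda$, so $\Psi_!(M)_n=\bigoplus_\lambda\colim(M|_{\cC_\lambda})$.

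It then remains to identify $\colim(M|_{\cC_\lambda})$ with $\Ind_{U_{n,\lambda}}^{U_n}(M_{n,\lambda})$. Fix coset representatives $\{h_i\}_{i\in I}$ for $U_n/U_{n,\lambda}$; each $(R^n,\lambda,h_i)$ lies in $\cC_\lambda$. The key claim is that every object $(R^{n'},\mu,f)$ of $\cC_\lambda$ admits a morphism to $(R^n,\lambda,h_i)$ for exactly one $i$, and that this morphism is unique. Uniqueness is automatic because $\Psi$ is faithful (such a morphism is the $g$ with $\Psi(g)=h_i^{-1}f$). For existence with index $i$, one needs $h_i^{-1}f$ to have no lower-order terms on the marked basis vectors; since $f(e_{\mu_k})=e_{\lambda_k}+(\text{lower-order terms})$ for $1\le k\le d$, this occurs exactly when $h_i(e_{\lambda_k})=f(e_{\mu_k})$ for all $k$. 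The set of $h\in U_n$ with $h(e_{\lambda_k})=f(e_{\mu_k})$ for all $k$ is nonempty — the $f(e_{\mu_k})$ are legitimate $\lambda_k$-th columns of a unipotent upper-triangular matrix and may be prescribed independently of the remaining columns — and forms a single left coset of $U_{n,\lambda}$, which pins down the unique $i$. Given the claim, $(R^n,\lambda,h_i)$ is the terminal object of its connected component of $\cC_\lambda$: it has only the identity endomorphism by faithfulness of $\Psi$, and the assigned index of an object is constant along morphisms, hence constant on components and equal to $i$ at $(R^n,\lambda,h_i)$, so the components of the $(R^n,\lambda,h_i)$ partition $\cC_\lambda$. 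Therefore $\colim(M|_{\cC_\lambda})=\bigoplus_{i\in I}M_{(R^n,\lambda,h_i)}=\bigoplus_{i\in I}M_{n,\lambda}$, and tracking the action of $U_n=\Aut_{\OVI}(R^n)$ — which sends $(R^{n'},\mu,f)$ to $(R^{n'},\mu,uf)$, hence sends the component of $h_i$ to that of the $h_j$ with $uh_i\in h_jU_{n,\lambda}$ via the element $h_j^{-1}uh_i\in U_{n,\lambda}$ — identifies this direct sum, as a representation of $U_n$, with $\Ind_{U_{n,\lambda}}^{U_n}(M_{n,\lambda})$, naturally in $M$.

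I expect the main obstacle to be the existence half of the key claim: that an $\OVI$-morphism whose underlying injection carries $\mu$ to $\lambda$ can be straightened so as to respect the markings after post-composing with a suitable unipotent matrix, together with the determination of exactly which left coset of $U_{n,\lambda}$ is thereby produced. This is a ``row operations'' statement in the spirit of the factorization proposition at the end of \S\ref{ss:defovi}; once it is in place, the disjointness of the $\cC_\lambda$ and the terminal-object bookkeeping are routine.
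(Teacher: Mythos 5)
Your proof is correct and follows essentially the same route as the paper: the paper factors $\Psi$ as $\OVI(d)\to\OVI(d)'\to\OVI$ through an intermediate category whose automorphisms are the $U_n$'s, applying Proposition~\ref{prop:kanind} to the first functor and the argument of Proposition~\ref{prop:OIkan} to the second, whereas you carry out both the $\lambda$-decomposition and the coset/terminal-object analysis directly in the comma category $\OVI(d)_{/R^n}$. Your ``straightening'' claim (that the $h\in U_n$ with $h(e_{\lambda_k})=f(e_{\mu_k})$ form a nonempty left coset of $U_{n,\lambda}$) is exactly the verification that the hypothesis of Proposition~\ref{prop:kanind} holds, so the two arguments have identical mathematical content.
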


\begin{proof}
Let $\OVI(d)'$ be the category whose objects are those of $\OVI(d)$ and where a morphism $(V,\{v_i\}_{i \in I},\lambda) \to (W,\{w_j\}_{j \in J},\mu)$ is a morphism $(f,f_0)$ as in $\OVI$ (ignoring the $\lambda$ and $\mu$) such that $f_0$ is a morphism in $\OI(d)$. The automorphism groups in $\OVI(d)'$ are the $U_n$'s. The functor $\Psi$ factors as $\Psi_2 \circ \Psi_1$, where $\Psi_1 \colon \OVI(d) \to \OVI(d)'$ and $\Psi_2 \colon \OVI(d)' \to \OVI$ are the natural functors. Proposition~\ref{prop:kanind} applies to the functor $\Psi_1$, and so we find
\begin{displaymath}
(\Psi_1)_!(M)_{n,\lambda}=\Ind_{U_{n,\lambda}}^{U_n}(M_{n,\lambda}).
\end{displaymath}
Arguing exactly as in the proof of Proposition~\ref{prop:OIkan}, we find
\begin{displaymath}
(\Psi_2)_!(N)_n=\bigoplus_{\lambda} N_{n,\lambda}
\end{displaymath}
for any $\OVI(d)'$-module $N$. The result follows.
\end{proof}

\subsection{$\OVI$-modules and representations of $\bU$}

Define an $\OI(d)$-group $\bU_d$ by $(\bU_d)_{n,\lambda}=U_{n,\lambda}$. If $M$ is an $\OVI(d)$-module then we can regard it as an $\OI(d)$-module via the functor $\OI(d) \to \OVI(d)$, and as such it has the structure of a $\bU_d$-module. We thus have a functor
\begin{displaymath}
\{ \text{$\OVI(d)$-modules} \} \to \{ \text{$\bU_d$-modules} \}.
\end{displaymath}
One can show that the above functor is fully faithful. We do not need this result, so we do not include a proof. We write $\bU$ in place of $\bU_0$.

\section{Noetherianity of \texorpdfstring{$\OVI$}{OVI}-modules} \label{s:noeth}

The goal of this section is to prove Theorem \ref{mainthm3}, which we recall says that if $R$ is a ring
whose underlying additive group is finitely generated and $\bk$ is a commutative noetherian ring, then the category
of $\OVI(R)$-modules over $\bk$ is locally noetherian, that is, any submodule of a finitely generated module is finitely 
generated.  The ring $R$ here is not required to be commutative.  When $R$ is finite, this follows from
the much easier Theorem~\ref{thm:qgrob}.  We will also prove a converse to this result that says that (ignoring degenerate cases) the category $\Rep_{\bk}(\OVI(R))$ is locally noetherian only if $\bk$ is noetherian and the additive 
group of $R$ is finitely generated. We thus have a complete characterization of when $\Rep_{\bk}(\OVI(R))$ is locally noetherian.

This section has four subsections.  We begin in \S \ref{section:thm3outline} by describing a toy version of our proof.
We then prove a technical ring-theoretic result in \S \ref{section:surject}.  The proof of Theorem \ref{mainthm3}
is in the long \S \ref{section:mainthm3proof}.  Finally, in \S \ref{section:converse} we prove the aforementioned
converse to Theorem \ref{mainthm3}. 

\subsection{A toy version of Theorem~\ref{mainthm3}}
\label{section:thm3outline}

In the next sections, we prove Theorem~\ref{mainthm3}. The proof is a bit lengthy and heavy on notation, but the idea behind it is not too complicated. In this section we sketch the proof of a simpler result that illustrates the main ideas.

Theorem~\ref{mainthm3} (with $R=\bZ$) implies that the group algebra $\bk[U_n(\bZ)]$ is left-noetherian, provided $\bk$ is noetherian. Let us try to prove this for $n=3$. The group algebra can be identified, as a $\bk$-module, with
\begin{displaymath}
Q=x_2 y_3 \bk[x_1^{\pm 1}, y_1^{\pm 1}, y_2^{\pm 1}],
\end{displaymath}
which we treat as a $\bk$-submodule of the Laurent polynomial ring in the five variables. The monomials in this module correspond to the group elements in $\bk[U_3(\bZ)]$; the exponents of the $x$'s give the second column, while the exponents of the $y$'s gives the third.

We must show that any $U_3(\bZ)$-submodule of $Q$ is finitely generated. Let $M$ be a given submodule. Let $Q_+$ be the $\bk$-submodule of $Q$ where only positive powers of the variables appear. We would like to associate to $M$ a monomial ideal in $Q_+$, and then use the noetherianity of monomial ideals to conclude that $M$ is finitely generated. By ``ideal'' here we really mean $\bk[x_1,y_1,y_2]$-submodule. The obvious attempt at this is to first form $M_+=M \cap Q_+$ and then take its initial module $\ini(M_+)$, the $\bk$-span of the initial terms of its elements under some monomial order. The problem with this is that $\ini(M_+)$ need not be an ideal. For example, suppose that $M_+$ contains the element $f=x_2 y_3(y_2+1)$, with initial term $\ini(f)=x_2 y_2 y_3$. Let's try to find $x_1 \ini(f)$ in $\ini(M_+)$. If we apply the matrix
\begin{displaymath}
\begin{pmatrix}
1 & 1 & 0 \\
0 & 1 & 0 \\
0 & 0 & 1 \end{pmatrix}
\end{displaymath}
to $f$, we get the element $f'=x_1 x_2 y_3 (y_1 y_2+1)$, with initial term $x_1 x_2 y_1 y_2 y_3$. This is equal to $x_1 y_1 \ini(f)$, so we now need to get rid of the $y_1$. We therefore apply the matrix
\begin{displaymath}
\begin{pmatrix}
1 & 0 & -1 \\
0 & 1 & 0 \\
0 & 0 & 1 \end{pmatrix}
\end{displaymath}
to $f'$, to get the element $f''=x_1 x_2 y_1^{-1} y_3 (y_1 y_2+1)$. This has the correct leading term. However, it no longer belongs to $M_+$: the power of $y$ in the non-leading term is negative. Thus $\ini(f'')$ does not give an element of $\ini(M_+)$. There does not seem to be a way to produce $x_1 \ini(f)$ in $\ini(M_+)$.

\begin{remark}
This approach is really attempting to show that the monoid algebra $Q_+=\bk[U_3(\bZ_{\ge 0})]$ is noetherian. In fact, it is not noetherian. For example, the left ideal generated by the matrices $\begin{pmatrix} 1 & n & 0 \\ 0 & 1 & 1 \\ 0 & 0 & 1 \end{pmatrix}$ for $n \ge 0$ in $Q_+$ is not finitely generated.
\end{remark}

To overcome this problem, we take a more subtle approach. Let $Q_*$ be the submodule of $Q$ where the exponent of $y_2$ is positive, but we still allow negative powers of $x_1$ and $y_1$. Given $M \subset Q$, let $M_*=M \cap Q_*$. We can then form the initial module with respect to $y_2$ (that is, we treat the other variables as constants); call this $\ini_2(M_*)$. Since we allow negative powers of $y_1$, the issue in the previous paragraph does not arise, and $\ini_2(M_*)$ is closed under multiplication by $x_1^{\pm 1}$, $y_1^{\pm 1}$, and $y_2$. We now intersect $\ini_2(M_*)$ with $M_+$ and then take initial terms with respect to $x_1$ and $y_1$. The result is a monomial ideal of $Q_+$. Call this monomial ideal $I(M)$. One can show that if $M \subset M'$ and $I(M)=I(M')$ then $M=M'$. Since $Q_+$ is noetherian as a $\bk[x_1, y_1, y_2]$-module, this proves that $Q$ is noetherian as a $\bk[U_3(\bZ)]$-module.

The same approach works for $\bk[U_n(\bZ)]$, but the process is more involved. Let $Q$ be the group algebra, which we identify with a $\bk$-submodule of the Laurent polynomial ring in variables $x_{i,j}$ with $i \le j$. We let $Q^{(k)}$ be the $\bk$-submodule where the exponents of $x_{i,j}$ with $i \ge k$ are positive. Thus $Q^{(n)}=Q$ and $Q^{(0)}$ is what we would call $Q_+$. Let $M$ be a $U_n(\bZ)$-submodule of $Q$. We obtain a monomial ideal in $Q_+$ as follows: intersect with $Q^{(n-1)}$ and take the initial submodule with respect to $x_{\bullet,n}$; then intersect with $Q^{(n-2)}$ and take the initial submodule with respect to $x_{\bullet,n-1}$; and so on. After $n$ steps we obtain a monomial ideal in $Q_+$. The argument then proceeds as in the previous case.

\begin{remark}
The strategy employed here has some parallels with Hall's proof \cite[Lemma 3]{hall} that the group ring $\bk[\Gamma]$ of a polycyclic group $\Gamma$ is noetherian. There the key point is to take a normal subgroup $\Gamma'$ such that $\Gamma / \Gamma' \cong \bZ$ and treat each element of $\bk[\Gamma]$ as a Laurent polynomial in $x$ with coefficients in $\bk[\Gamma']$ (where $x$ is some generator for $\bZ$) and argue by passing to initial terms.
\end{remark}

The proof for $\OVI(R)$ differs from the above in only two respects. First, there is a great deal of additional bookkeeping. Second, we need a noetherianity result for the kind of $\OI$-monomial ideals that appear in the reduction. This follows easily from Higman's lemma, and is closely related to the theorem (of Cohen \cite{cohen} and Aschenbrenner--Hillar--Sullivant \cite{AH, hillar-sullivant}) that $\bk[x_i]_{i \in \bN}$ is $\Inc(\bN)$-noetherian, where $\Inc(\bN)$ is the monoid of increasing functions $\bN \to \bN$.

\subsection{Eliminating additive torsion}
\label{section:surject}

For technical reasons, Theorem \ref{mainthm3} is easier to prove when $R$ is a ring whose additive group
is a finitely generated free abelian group.  In this section, we show how to reduce to that case.  Our
main tool is the following lemma.

\begin{lemma}
\label{lemma:surjectringnoetherian}
Let $S$ be a ring and let $\bk$ be a commutative ring such that the category of
$\OVI(S)$-modules over $\bk$ is locally noetherian.  Assume that $S$ surjects onto a
ring $R$.  Then the category of $\OVI(R)$-modules over $\bk$ is locally noetherian.
\end{lemma}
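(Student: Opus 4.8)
The plan is to realize the reduction via a base-change functor and then appeal to Proposition~\ref{prop:noeth}. Let $\pi \colon S \to R$ be the given surjection. Define a functor $\Phi \colon \OVI(S) \to \OVI(R)$ on objects by $\Phi(V, \{v_i\}_{i \in I}) = (R \otimes_S V, \{1 \otimes v_i\}_{i \in I})$, where $R$ is viewed as a right $S$-module through $\pi$; since $V$ is free of finite rank on $\{v_i\}$ over $S$, the module $R \otimes_S V$ is free of finite rank on $\{1 \otimes v_i\}$ over $R$ and $I$ keeps its total order, so this is an object of $\OVI(R)$. On a morphism $(f, f_0)$ with $f(v_i) = w_{f_0(i)} + \sum_{j < f_0(i)} a_{i,j} w_j$, set $\Phi(f, f_0) = (\id_R \otimes f, f_0)$; then $(\id_R \otimes f)(1 \otimes v_i) = 1 \otimes w_{f_0(i)} + \sum_{j < f_0(i)} \pi(a_{i,j})(1 \otimes w_j)$, which is of the required form, so $\Phi(f, f_0)$ is a morphism of $\OVI(R)$, and functoriality is clear. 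Since $R \otimes_S S^n \cong R^n$ compatibly with the standard bases, and every object of $\OVI(R)$ is isomorphic to some $R^n$, the functor $\Phi$ is essentially surjective.

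Next I would verify that $\Phi$ satisfies property~(F). Fix an object of $\OVI(R)$, which we may take to be $R^m$. I claim the single object $x_1 = S^m$ (standard basis) together with the single morphism $f_1 = \id_{R^m} \colon R^m \to R^m = \Phi(S^m)$ witnesses property~(F) at $R^m$. Indeed, let $x \in \OVI(S)$ and let $f \colon R^m \to \Phi(x)$ be a morphism of $\OVI(R)$. Choosing an isomorphism $\theta \colon S^n \xrightarrow{\ \sim\ } x$ in $\OVI(S)$, it suffices to lift $\Phi(\theta)^{-1} \circ f \colon R^m \to R^n$ along $\Phi$, so we may assume $x = S^n$ and $f \colon R^m \to R^n$. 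Such an $f$ is given by an order-preserving injection $f_0 \colon [m] \to [n]$ and scalars $\bar a_{i,j} \in R$ for $j < f_0(i)$ with $f(e_i) = e_{f_0(i)} + \sum_{j < f_0(i)} \bar a_{i,j} e_j$. Using surjectivity of $\pi$, choose $a_{i,j} \in S$ with $\pi(a_{i,j}) = \bar a_{i,j}$, and define $g \colon S^m \to S^n$ by the same $f_0$ and $g(e_i) = e_{f_0(i)} + \sum_{j < f_0(i)} a_{i,j} e_j$. This is a morphism of $\OVI(S)$, and by construction $\Phi(g) = f = f \circ \id_{R^m} = \Phi(g) \circ f_1$. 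Hence $\Phi$ satisfies property~(F).

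Finally, since $\Rep_\bk(\OVI(S))$ is locally noetherian by hypothesis and $\Phi \colon \OVI(S) \to \OVI(R)$ is essentially surjective and satisfies property~(F), Proposition~\ref{prop:noeth} yields that $\Rep_\bk(\OVI(R))$ is locally noetherian. There is no serious obstacle in this argument; the only points needing care are that $\Phi$ preserves the "upper-triangular up to lower-order terms" shape of morphisms (which it does, since base change applies $\pi$ coefficientwise) and that the lift $g$ is genuinely an $\OVI(S)$-morphism (which is immediate, as we reuse $f_0$ and merely lift the scalars).
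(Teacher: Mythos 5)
Your proposal is correct and follows essentially the same route as the paper: both reduce to Proposition~\ref{prop:noeth} and hinge on the fact that the surjection $S \to R$ makes $\Hom_{\OVI(S)}(S^m,S^n) \to \Hom_{\OVI(R)}(R^m,R^n)$ surjective. The only (cosmetic) difference is that you verify property~(F) directly from Definition~\ref{def:propF} by lifting a single morphism, whereas the paper packages the same surjectivity via Proposition~\ref{prop:fpullback} as finite generation of the pullbacks $\Phi^*(P_d)$.
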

\begin{proof}
The surjection $S \rightarrow R$ induces a functor $\Phi\colon \OVI(S) \rightarrow \OVI(R)$.  By
Proposition~\ref{prop:noeth}, it is enough to show that $\Phi$ satisfies property~(F).  For
some $d \geq 1$, let $P_d$ be the principal projective $\OVI(R)$-module associated to $R^d$, so
\[(P_d)_n = \bk[\Hom_{\OVI(R)}(R^d,R^n)] \quad \quad (n \geq 1).\]
By Proposition~\ref{prop:fpullback}, to prove that $\Phi$ satisfies property~(F) it is enough
to prove that $\Phi^{\ast}(P_d)$ is finitely generated.  Since the map $S \rightarrow R$ of rings
is surjective, the induced map
\[\Hom_{\OVI(S)}(S^d,S^n) \rightarrow \Hom_{\OVI(R)}(R^d,R^n)\]
is also surjective for all $n \geq 1$.  This implies that there is a surjective map from the principal projective
$\OVI(S)$-module associated to $S^d$ to $\Phi^{\ast}(P_d)$, and thus that $\Phi^{\ast}(P_d)$ is finitely generated,
as desired.
\end{proof}

\begin{lemma}
\label{lemma:surjectfree}
Let $R$ be a ring whose additive group is finitely generated.  Then there exists a ring $S$
and a surjection $S \rightarrow R$ such that the additive group of $S$ is free and finitely generated.
\end{lemma}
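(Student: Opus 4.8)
The plan is to realise $R$ faithfully inside the endomorphism ring of a finite rank free abelian group, and then carve out a subring of that endomorphism ring which surjects onto $R$. Write $R^+$ for the underlying additive group of $R$. Since $R$ is unital, left multiplication furnishes a ring homomorphism $L\colon R \to \End_\bZ(R^+)$, $r \mapsto (x \mapsto rx)$, which is injective because $L_r(1) = r$; so I may regard $R$ as a unital subring of $\End_\bZ(R^+)$. Because $R^+$ is finitely generated, I can choose a surjection $\pi\colon F \twoheadrightarrow R^+$ from a finite rank free abelian group $F$, with kernel $K = \ker(\pi)$, which is again free of finite rank, so that $F/K \cong R^+$.

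Now I would define $S$ to be the set of $f \in \End_\bZ(F)$ such that $f(K) \subseteq K$ and such that the induced endomorphism $\bar f$ of $F/K \cong R^+$ lies in the subring $L(R)$. Using that $L(R)$ is a unital subring of $\End_\bZ(R^+)$, one checks directly that $S$ is a unital subring of $\End_\bZ(F)$ and that $f \mapsto L^{-1}(\bar f)$ is a well-defined ring homomorphism $\varphi\colon S \to R$. The crucial point is that $\varphi$ is surjective: given $r \in R$, the endomorphism $L_r$ of $R^+ = F/K$ lifts, since $F$ is projective, to some $\tilde f \in \End_\bZ(F)$ with $\pi \circ \tilde f = L_r \circ \pi$; then $\pi(\tilde f(K)) = L_r(\pi(K)) = 0$, so $\tilde f(K) \subseteq K$ and $\bar{\tilde f} = L_r$, i.e.\ $\tilde f \in S$ with $\varphi(\tilde f) = r$.

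Finally, $\End_\bZ(F)$ is free abelian of rank $\rank(F)^2$, so its subgroup $S$ is free and finitely generated, which completes the proof. The argument is genuinely short, and there is no serious obstacle once one hits on the idea of using the faithful regular representation of $R$; the only step that requires a little care is the surjectivity claim, and there the only subtlety is the (trivial) observation that any lift $\tilde f$ of $L_r$ through the free module $F$ automatically carries $K$ into $K$.
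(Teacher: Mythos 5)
Your proof is correct, and it takes a genuinely different route from the paper's. The paper argues by induction on the exponent $N$ of the torsion subgroup of $R$: it picks a prime $p \mid N$, forms the monoid ring $\bZ[R/pR]$ of the finite quotient $R/pR$ (which has free additive group), and replaces $R$ by the fiber product of $\bZ[R/pR] \to R/pR$ and $R \to R/pR$, checking that the torsion exponent drops from $N$ to $N/p$; after finitely many steps the torsion is killed. You instead embed $R$ into $\End_\bZ(R^+)$ via the (faithful, since $R$ is unital) left regular representation, choose a finite rank free cover $\pi \colon F \twoheadrightarrow R^+$ with kernel $K$, and take $S$ to be the ``congruence subring'' of $\End_\bZ(F)$ consisting of endomorphisms preserving $K$ whose induced action on $F/K \cong R^+$ lies in $L(R)$; surjectivity of $S \to R$ follows from projectivity of $F$, and freeness of $S$ from the fact that it sits inside $\End_\bZ(F) \cong \bZ^{(\rank F)^2}$. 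All the steps check out: $L_{rs} = L_r \circ L_s$ holds even for noncommutative $R$, $S$ is visibly a unital subring, and any lift $\tilde f$ of $L_r \circ \pi$ through $\pi$ does carry $K$ into $K = \ker\pi$. Your argument is a single direct construction with no induction and no appeal to finiteness of $R/pR$, and is arguably the slicker of the two; the paper's approach has the mild virtue of only ever modifying $R$ by fiber products with monoid rings of finite rings, which keeps the intermediate objects close to $R$ itself, but for the purposes of Lemma \ref{lemma:surjectfree} your construction is a perfectly good (and perhaps preferable) substitute.
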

\begin{proof}
Let $R_{\tors}$ be the torsion subgroup of the additive group of $R$ and let $N \geq 1$ be the exponent of $R_{\tors}$, i.e.,\ 
the minimal number such that $N R_{\tors} = 0$.  The proof is by induction on $N$.  In the base case where $N=1$,
the group $R_{\tors}$ is trivial and there is nothing to prove.  Assume, therefore, that $N>1$ and that the
lemma is true for all smaller exponents.  Let $p$ be a prime dividing $N$.  The ring $R/pR$ is a finite ring.
Let $\bZ[R/pR]$ be the monoid ring of the multiplicative monoid underlying $R/pR$, so $\bZ[R/pR]$ consists of
finite sums of formal symbols $\Set{$[x]$}{$x \in R/pR$}$ with the ring structure defined by $[x] [y] = [xy]$.  The
additive group of the ring $\bZ[R/pR]$ is free abelian with basis in bijection with the elements of $R/pR$, and there
exists a ring surjection $\bZ[R/pR] \rightarrow R/pR$ taking $[x] \in \bZ[R/pR]$ to $x \in R/pR$.  Let $R'$ be the fiber product of the surjections $\bZ[R/pR] \rightarrow R/pR$ and $R \rightarrow R/pR$, so we have a cartesian square
\[\xymatrix{
R' \ar[r] \ar[d] & \bZ[R/pR] \ar[d] \\
R \ar[r] & R/pR.}\]
Concretely,
\[R' = \Set{$(x,r) \in \bZ[R/pR] \times R$}{$x$ and $r$ map to same element of $R/pR$}.\]
Since the maps $R \rightarrow R/pR$ and $\bZ[R/pR] \rightarrow R/pR$ are surjective, so is the map $R' \rightarrow R$.
Since the additive group underlying $\bZ[R/pR]$ is torsion-free, the torsion subgroup $(R')_{\tors}$ consists
of pairs $(0,r) \in \bZ[R/pR] \times R_{\tors}$ such that $r \in R_{\tors}$ maps to $0$ in $R/pR$.  It follows that
\[(R')_{\tors} \cong R_{\tors} \cap p R = p R_{\tors}.\]
The exponent of $(R')_{\tors}$ is thus $N/p$, so by induction there exists a ring $S$ whose
additive group is finitely generated and free together with a surjection $S \rightarrow R'$.  The
desired surjection to $R$ is then the composition $S \rightarrow R' \rightarrow R$.
\end{proof}

\subsection{The proof of Theorem \ref{mainthm3}}
\label{section:mainthm3proof}

We now commence with the proof of Theorem \ref{mainthm3},
which we recall says that if $R$ is a ring
whose underlying additive group is finitely generated and $\bk$ is a commutative noetherian ring, then the category
of $\OVI(R)$-modules over $\bk$ is locally noetherian.  By Lemmas \ref{lemma:surjectringnoetherian}
and \ref{lemma:surjectfree}, we can assume that the additive group of $R$ is a finitely generated free abelian
group (this assumption will first be used in Substep \ref{substep:2a} below).  
Fix some $d \geq 0$ and let $P_d$ be the principal projective of $\OVI(R)$ defined by the formula
\[
(P_d)_n = \bk[\Hom_{\OVI(R)}(R^d,R^n)] \quad \quad (n \geq 1).
\]
To prove the theorem, it is enough to prove that the poset of $\OVI(R)$-submodules of $P_d$ is noetherian, i.e.,\ has no 
infinite strictly increasing sequences.  This is trivial for $d=0$, so we can assume that $d \geq 1$.

Say that a map $f\colon I \rightarrow J$ of posets is {\bf conservative} if for all $i,i' \in I$ satisfying
$i \leq i'$ and $f(i) = f(i')$, we have $i = i'$.  If $J$ is a noetherian poset and $f\colon I \rightarrow J$
is a conservative map, then $I$ is also noetherian.  Our strategy will be to use a
sequence of conservative poset maps to reduce proving that the poset of $\OVI(R)$-submodules
of $P_d$ is noetherian to proving that another easier poset $\fM^{(0)}$ is noetherian.
To help the reader understand its structure, we divide our proof into three steps (each
of which is divided into a number of substeps).

Since we will introduce a lot of notation, to help the reader recall the meanings
of symbols we will list the notation that is defined in each substep.

\begin{step}{1}
\label{step:1}
We construct a poset $\fM$ and reduce the theorem to showing that $\fM$ is noetherian.
\end{step}

As in the toy version of our proof, the first step will be to relate the poset
of $\OVI(R)$-submodules of $P_d$ to a poset $\fM$ constructed using certain
``generalized polynomial rings''.  In fact, $\fM$ will be a poset of certain
special $\OI(d)$-submodules of an $\OI(d)$-module $Q$.  
There are three substeps: in Substep \ref{substep:1a} we construct the $\OI(d)$-module
$Q$, in Substep \ref{substep:1b} we construct the poset $\fM$ of special 
$\OI(d)$-submodules of $Q$, and then finally in Substep \ref{substep:1c} we
construct a conservative poset map from the poset of $\OVI(R)$-submodules of $P_d$ to 
$\fM$.

\begin{substep}{1a}
\label{substep:1a}
We construct the $\OI(d)$-module $Q$.

\noindent
{\bf Notation defined:} $\Lambda_n$, $T_{i,j}^r$, $\bT_n$, $\Lambda_n(\bS)$, $\Lambda_{n,\alpha}$, $\bT_{n,\alpha}$, $Q$, $Q_{n,\alpha}$
\end{substep}

We will want to view matrices with entries in $R$ as certain kinds
of ``monomials''.  Since we will be focusing on $P_d$, the relevant matrices will have
$d$ columns and some number $n \geq 1$ of rows.  To that end, we make the following definition:
\begin{compactitem}
\item Define $\Lambda_n$ to be the commutative monoid generated by the set of formal symbols
$T_{i,j}^r$ with $1 \leq i \leq n$ and $1 \leq j \leq d$ and $r \in R$ subject to the
relations $T_{i,j}^{r_1} T_{i,j}^{r_2} = T_{i,j}^{r_1+r_2}$, where $1 \leq i \leq n$ and $1 \leq j \leq d$
and $r_1,r_2 \in R$.
\end{compactitem}
Elements of $\Lambda_n$ are thus ``monomials'' in the $T_{i,j}^r$, and are naturally in bijection with $n \times d$ matrices with entries in $R$: given such a matrix $(r_{i,j})$, the associated element of $\Lambda_n$ is the product of the $T_{i,j}^{r_{i,j}}$, where $i$ ranges over $1 \leq i \leq n$ and $j$ ranges over $1 \leq j \leq d$.  The monoid product in $\Lambda_n$ corresponds to matrix addition. For later use, setting $\bT_n = \Set{$T_{i,j}$}{$1 \leq i \leq n$, $1 \leq j \leq d$}$, for $\bS \subset \bT_n$ we define $\Lambda_n(\bS)$ to be the submonoid of $\Lambda_n$ generated by $\Set{$T_{i,j}^r$}{$T_{i,j} \in \bS$, $r \in R$}$.

Now consider an element $f\in \Hom_{\OVI(R)}(R^d,R^n)$.  By definition, $f$ is a linear map $R^d \rightarrow R^n$ such that there exists a strictly increasing sequence $\alpha = (\alpha_1,\ldots,\alpha_d)$ of $d$ elements of $[n] = \{1,\ldots,n\}$ with the following property:
\begin{compactitem}
\item For $1 \leq i \leq d$, the map $f$ takes the $i$th basis element of $R^d$ to the sum of the $\alpha_i$th basis
element of $R^n$ and an $R$-linear combination of the basis elements of $R^n$ that occur before $\alpha_i$.
\end{compactitem}
Define $\Lambda_{n,\alpha}$ to be the subset of $\Lambda_n$ consisting of elements associated to $n \times d$
matrices of this form.  Defining
\[\bT_{n,\alpha} = \Set{$T_{i,j}$}{$1 \leq j \leq d$, $1 \leq i < \alpha_j$},\]
an element $\tau \in \Lambda_{n,\alpha}$ can be written as 
\begin{equation}
\label{eqn:lambdaalpha}
\tau = T_{\alpha_1,1}^1 T_{\alpha_2,2}^1 \cdots T_{\alpha_d,d}^1 \tau' \quad \quad \text{with $\tau' \in \Lambda_n(\bT_{n,\alpha})$}.
\end{equation}
We thus have a bijection of sets
\[\Hom_{\OVI(R)}(R^d,R^n) \cong \bigsqcup_{\alpha} \Lambda_{n,\alpha},\]
where the disjoint union ranges over the strictly increasing sequences $\alpha$ of $d$ elements of $[n]$.  It follows that
\begin{equation}
\label{eqn:decomposepd}
(P_d)_n = \bk[\Hom_{\OVI(R)}(R^d,R^n)] = \bigoplus_{\alpha} \bk[\Lambda_{n,\alpha}].
\end{equation}
The various $\bk[\Lambda_{n,\alpha}]$ fit together into an $\OI(d)$-module $Q$ with
\[
Q_{n,\alpha} = \bk[\Lambda_{n,\alpha}] \quad \quad ((n,\alpha) \in \OI(d)).
\]

\begin{substep}{1b}
\label{substep:1b}
We construct a poset $\fM$ of $\OI(d)$-submodules of $Q$.

\noindent
{\bf Notation defined:} $\fM$, $E_{i,\alpha_j}^r$
\end{substep}

Consider an $\OVI(R)$-submodule $M$ of $P_d$.  We say that $M$ is a {\bf homogeneous $\OVI(R)$-submodule of $P_d$} if
for all $n \geq 1$, the $\bk$-submodule $M_n$ of $(P_d)_n$ splits according to the decomposition
\eqref{eqn:decomposepd}, i.e.,\ for all $(n,\alpha) \in \OI(d)$ 
there exists some $\bk$-submodule $M_{n,\alpha}$ of $\bk[\Lambda_{n,\alpha}]$ such that
\[M_n = \bigoplus_{\alpha} M_{n,\alpha}.\]
In this case, the various $M_{n,\alpha}$ fit together into an $\OI(d)$-submodule of $Q$.  We thus get
a poset injection
\[\{\text{homogeneous $\OVI(R)$-submodules of $P_d$}\} \hookrightarrow \{\text{$\OI(d)$-submodules of $Q$}\}.\]
The image of this injection consists of all $\OI(d)$-submodules $M$ of $Q$ such that
each $M_{n,\alpha} \subset Q_{n,\alpha}$ is preserved
by the action of $U_n(R)$, which acts on $Q_{n,\alpha}$ via the identification of $Q_{n,\alpha}$ with
the set of formal $\bk$-linear combinations of appropriate $n \times d$ matrices.

For the sake of our later arguments, we will actually consider a larger collection
of submodules.  Define $\fM$ to be the poset of all $\OI(d)$-submodules $M$ of $Q$ such that
the following hold.  Consider $(n,\alpha) \in \OI(d)$ with 
$\alpha = (\alpha_1,\ldots,\alpha_d)$.
Let $\{\vec{e}_1,\ldots,\vec{e}_n\}$ be the standard basis for $R^n$.
For $1 \leq j \leq d$ and $1 \leq i < \alpha_j$ and $r \in R$, define $E_{i,\alpha_j}^r \in U_n(R)$ to be the element
that takes $\vec{e}_{\alpha_j}$ to $r \vec{e}_i + \vec{e}_{\alpha_j}$ and fixes all of the other basis vectors.  We
then require that $M_{n,\alpha}$ be preserved by all of the $E_{i,\alpha_j}^r$ for $1 \leq j \leq d$ and
$1 \leq i < \alpha_j$ and $r \in R$.  The construction in the previous paragraph gives a poset injection
\begin{equation}
\label{eqn:homogeneousmd}
\{\text{homogeneous $\OVI(R)$-submodules of $P_d$}\} \hookrightarrow \fM.
\end{equation}

\begin{substep}{1c}
\label{substep:1c}
We construct a conservative poset map $\{\text{$\OVI(R)$-submodules of $P_d$}\} \longrightarrow \fM$.

\noindent
{\bf Notation defined:} none
\end{substep}

By \eqref{eqn:homogeneousmd}, it is enough to construct a conservative poset map
\begin{equation}
\label{eqn:homogenize}
\{\text{$\OVI(R)$-submodules of $P_d$}\} \rightarrow \{\text{homogeneous $\OVI(R)$-submodules of $P_d$}\}.
\end{equation}

For each $n \geq 1$, put a total ordering on the set of all strictly increasing 
sequences $\alpha$ of $d$
elements of $[n]$ using the lexicographic ordering: $\alpha < \alpha'$ if the first nonzero entry
$\alpha' - \alpha$ is positive.  Given a nonzero element $f \in (P_d)_{n}$, use
the identification \eqref{eqn:decomposepd} to write $f = \sum_{\alpha} f_{n,\alpha}$ with
$f_{n,\alpha} \in \bk[\Lambda_{n,\alpha}]$.  Define $\fin(f) = f_{n,\alpha_0}$, where
$\alpha_0$ is the largest index such that $f_{n,\alpha_0} \neq 0$.

Given an $\OVI(R)$-submodule $M$ of $P_d$ and some $n \geq 1$, define
$\fin(M)_n$ to be the $\bk$-span of $\Set{$\fin(f)$}{$f \in M_n$}$.  It is easy to see
that $\fin(M)$ is also an $\OVI(R)$-submodule of $P_d$.  Moreover, by construction $\fin(M)$
is homogeneous.  The map $M \mapsto \fin(M)$ is thus a poset map as in \eqref{eqn:homogenize}.
We must prove that it is conservative.  Assume otherwise, and let $M$ and $M'$ be $\OVI(R)$-submodules of
$P_d$ such that $M \subsetneq M'$ and $\fin(M) = \fin(M')$.  Let $n \geq 1$ be such that $M_n \subsetneq M'_n$.
Let $x \in M'_n \setminus M_n$ be such that $\fin(x)$ lies in $\bk[\Lambda_{n,\alpha}]$ with $\alpha$ as small
as possible.
Since $\fin(M) = \fin(M')$, we can find some $x' \in M_n$ with $\fin(x) = \fin(x')$.  But then
$x-x' \in M'_n \setminus M_n$, while $\fin(x-x')$ lies in $\bk[\Lambda_{n,\alpha'}]$ with $\alpha'<\alpha$, a
contradiction.

\begin{step}{2}
\label{step:2}
We construct a poset $\fM^{(0)}$ and reduce the theorem to showing that $\fM^{(0)}$ 
is noetherian.
\end{step}

In Step \ref{step:1}, we reduced the theorem to showing that the poset $\fM$ constructed
in Substep \ref{substep:1b} is noetherian.  The goal of this step is to construct
a conservative poset map from $\fM$ to a simpler poset $\fM^{(0)}$.  This will be
done in a sequence of steps.  Recall that $\fM$ is a subposet of the poset of 
$\OI(d)$-submodules of an $\OI(d)$-module $Q$.  In Substep \ref{substep:2a} we will
construct an $\OI(d)$-module filtration
\[Q^{(0)} \subset Q^{(1)} \subset \cdots \subset Q^{(d)} = Q.\]
Next, in Substeps \ref{substep:2b} and \ref{substep:2c}
we will construct two posets $\fM^{(k)}$ and $\fN^{(k)}$ of special $\OI(d)$-submodules of
$Q^{(k)}$ such that $\fM^{(d)} = \fM$.  
Finally, in Substeps \ref{substep:2d} and \ref{substep:2e} we will
construct a sequence of conservative poset maps
\[\fM = \fM^{(d)} \rightarrow \fN^{(d-1)} \rightarrow \fM^{(d-1)} \rightarrow \fN^{(d-2)} \rightarrow \cdots \rightarrow \fN^{(0)} \rightarrow \fM^{(0)}.\]
This reduces the theorem to showing that the poset $\fM^{(0)}$ is noetherian.

\begin{substep}{2a}
\label{substep:2a}
We construct an $\OI(d)$-module filtration
\[Q^{(0)} \subset Q^{(1)} \subset \cdots \subset Q^{(d)} = Q.\]

\noindent
{\bf Notation defined:} $(R,+) = (\bZ^{\lambda},+)$, $R_{\geq 0}$, $\Lambda_{n,\alpha,k+}$, $\Lambda_{n,\alpha,+}$, $\Lambda_{n,+}$, $\Lambda_{n,+}(\bS)$, $Q^{(k)}$, $Q^{(k)}_{n,\alpha}$
\end{substep}

This step is where we use the fact that the additive group of $R$ is a finitely generated free abelian group.  Fix an identification of this additive group with $\bZ^{\lambda}$ for some $\lambda \geq 1$ such that the multiplicative identity $1 \in R$ is identified with an element of $(\bZ_{\geq 0})^{\lambda}$.  Let $R_{\geq 0}$ be
the submonoid of the additive group of $R$ corresponding to $(\bZ_{\geq 0})^{\lambda}$.  The monoid $R_{\geq 0}$ contains $1 \in R$, but
is not necessarily closed under multiplication.

Consider $(n,\alpha) \in \OI(d)$ with $\alpha = (\alpha_1,\ldots,\alpha_d)$.
For $0 \leq k \leq d$, define $\Lambda_{n,\alpha,k+}$ to be the set of all $\tau \in \Lambda_{n,\alpha}$ 
such that if $T_{i,j}^r$ appears in $\tau$ with $i \geq \alpha_k$, then $r \in R_{\geq 0}$.  For $k=0$, we use the convention
$\alpha_0 = 0$, and we will also frequently omit the $k$, so $\Lambda_{n,\alpha,+}$ is the set of all
$\tau \in \Lambda_{n,\alpha}$ such that if $T_{i,j}^r$ appears in $\tau$, then $r \in R_{\geq 0}$.
We will similarly define $\Lambda_{n,+}$ and $\Lambda_{n,+}(\bS)$ for $\bS \subset \bT_n$.
We then define $Q^{(k)}$ to be the $\OI(d)$-submodule of $Q$ where for all 
$(n,\alpha) \in \OI(d)$, we have
\[Q^{(k)}_{n,\alpha} = \bk[\Lambda_{n,\alpha,k+}].\]
We thus have $Q^{(d)} = Q$.  Moreover, 
\[Q^{(0)}_{n,\alpha} = \bk[\Lambda_{n,\alpha,+}].\]

\begin{substep}{2b}
\label{substep:2b}
For $0 \leq k \leq d$, we construct a subposet $\fM^{(k)}$ of the poset of $\OI(d)$-submodules of $Q^{(k)}$ such that $\fM^{(d)} = \fM$.

\noindent
{\bf Notation defined:} $\fM^{(k)}$, ($a.i_k$), ($a.ii_k$), ($b_k$), ($c_k$)
\end{substep}

We begin with some terminology.  A $\bk$-submodule $X$
of $\bk[\Lambda_n]$ is {\bf homogeneous} with respect to $\bS \subset \bT_n$ if
the following holds for all $x \in X$.  Write
\[x = \sum_{q=1}^{m} \tau_q y_q,\]
where for all $1 \leq q \leq m$ we have the following:
\begin{compactitem}
\item $\tau_q \in \Lambda_n(\bS)$, and the different $\tau_q$ are all distinct.
\item $y_q \in \bk[\Lambda_n(\bT_n \setminus \bS)]$.
\end{compactitem}
We then require that $\tau_q y_q \in X$ for all $1 \leq q \leq m$.
  
Now consider some $0 \leq k \leq d$.  Define $\fM^{(k)}$ to be the set of all 
$\OI(d)$-submodules $M$ of $Q^{(k)}$ such that
for all $(n,\alpha) \in \OI(d)$ with $\alpha = (\alpha_1,\ldots,\alpha_d)$,
the following conditions ($a.i_k$), ($a.ii_k$), ($b_k$), and ($c_k$) hold.  To
simplify our notation, we will set $\alpha_0 = 0$.
\begin{compactitem}
\item[($a$)] The $\bk$-module $M_{n,\alpha} \subset \bk[\Lambda_{n,\alpha,k+}]$ is closed under
multiplication by the following elements:
\begin{compactitem}
\item[($i_k$)] $T_{i,j}^r$ with $k \leq j \leq d$ and $1 \leq i < \alpha_k$ and $r \in R$.
\item[($ii_k$)] $T_{i,j}^r$ with $k+1 \leq j \leq d$ and $\alpha_k \leq i < \alpha_j$ and $r \in R_{\geq 0}$.
\end{compactitem}
\item[($b_k$)] The $\bk$-module $M_{n,\alpha}$ is closed under the operators $E_{i,\alpha_j}^r$ 
with $1 \leq j \leq k$ and $1 \leq i < \alpha_j$ and $r \in R$.
\item[($c_k$)] The $\bk$-module $M_{n,\alpha} \subset \bk[\Lambda_{n,\alpha,k+}]$ is homogeneous
with respect to 
\[
  \Set{$T_{\alpha_j,j}$}{$1 \leq j \leq d$} \cup \Set{$T_{i,j}$}{$k+1 \leq j \leq d$ and $\max(\alpha_k,1) \leq i < \alpha_j$}.
\]
\end{compactitem}

\noindent
We claim that $\fM^{(d)} = \fM$.  Condition ($b_d$) implies that
$\fM^{(d)} \subset \fM$, so
we must only prove that $\fM \subset \fM^{(d)}$.  Consider $M \in \fM$ and
$(n,\alpha) \in \OI(d)$ with $\alpha = (\alpha_1,\ldots,\alpha_d)$.
We must verify that $M_{n,\alpha}$ satisfies the properties above:
\begin{compactitem}
\item For ($a.i_d$), we must show that $M_{n,\alpha}$ is closed under
multiplication by $T_{i,d}^r$ for $1 \leq i < \alpha_d$ and $r \in R$.  But this
can be achieved using the operator $E_{i,\alpha_d}^r$, and by the definition
of $\fM$ the $\bk$-module $M_{n,\alpha}$ is closed under this operator,
so ($a.i_d$) follows.
\item No pairs $(i,j)$ satisfy
the conditions of ($a.ii_d$), so that condition is trivial.  
\item Condition ($b_d$) is a special case of the condition defining $\fM$, so it follows.
\item The set referred to in condition ($c_d$) consists only of
\[\Set{$T_{\alpha_j,j}$}{$1 \leq j \leq d$},\]
and by definition every element of $\bk[\Lambda_{n,\alpha}]$ is homogeneous with respect to these variables (see \eqref{eqn:lambdaalpha}), so that condition follows.
\end{compactitem} 

\begin{substep}{2c}
\label{substep:2c}
For $0 \leq k < d$, we construct a subposet $\fN^{(k)}$ of the poset of $\OI(d)$-submodules of $Q^{(k)}$.

\noindent
{\bf Notation defined:} $\fN^{(k)}$, ($a'.i'_k$), ($a'.ii'_k$), ($b'_k$), ($c'_k$)
\end{substep}

Our definition of $\fN^{(k)}$ will be a slight modification of our definition
of $\fM^{(k)}$.  Define $\fN^{(k)}$ to be the set of all
$\OI(d)$-submodules $N$ of $Q^{(k)}$ such that
for all $(n,\alpha) \in \OI(d)$ with
$\alpha = (\alpha_1,\ldots,\alpha_d)$,
the following conditions ($a'.i'_k$), ($a'.ii'_k$), ($b'_k$), and ($c'_k$) hold.
To simplify our notation, we will set $\alpha_0 = 0$.
\begin{compactitem}
\item[($a'$)] The $\bk$-module $N_{n,\alpha} \subset \bk[\Lambda_{n,\alpha,k+}]$ is closed under
multiplication by the following elements:
\begin{compactitem}
\item[($i'_k$)] $T_{i,j}^r$ with $k+1 \leq j \leq d$ and $1 \leq i < \alpha_k$ and $r \in R$.
\item[($ii'_k$)] $T_{i,j}^r$ with $k+1 \leq j \leq d$ and $\alpha_k \leq i < \alpha_j$ and $r \in R_{\geq 0}$.
\end{compactitem}
\item[($b'_k$)] The $\bk$-module $N_{n,\alpha}$ is closed under the operators $E_{i,\alpha_j}^r$
with $1 \leq j \leq k$ and $1 \leq i < \alpha_j$ and $r \in R$.
\item[($c'_k$)] The $\bk$-module $N_{n,\alpha} \subset \bk[\Lambda_{n,\alpha,k+}]$ is homogeneous
with respect to
\[\Set{$T_{\alpha_j,j}$}{$1 \leq j \leq d$} \cup \Set{$T_{i,j}$}{$k+2 \leq j \leq d$ and $\alpha_{k+1} \leq i < \alpha_j$}.\]
\end{compactitem}

\begin{substep}{2d}
\label{substep:2d}
For $1 \leq k \leq d$, we construct a conservative poset map $\fM^{(k)} \rightarrow \fN^{(k-1)}$.

\noindent
{\bf Notation defined:} none.
\end{substep}

Consider $M \in \fM^{(k)}$, so $M$ is an $\OI(d)$-submodule of $Q^{(k)}$.  Define
$N = M \cap Q^{(k-1)}$.  We claim that $N \in \fN^{(k-1)}$.  This requires
checking the conditions ($a'.i'_{k-1}$), ($a'.ii'_{k-1}$), ($b'_{k-1}$), and ($c'_{k-1}$).
Consider some $(n,\alpha) \in \OI(d)$ with
$\alpha = (\alpha_1,\ldots,\alpha_d)$.
\begin{compactitem}
\item Condition ($a'.i'_{k-1}$) asserts that $N_{n,\alpha}$ is closed under multiplication
by $T_{i,j}^r$ with $k \leq j \leq d$ and $1 \leq i < \alpha_{k-1}$ and $r \in R$.  
This follows from the fact that both $M_{n,\alpha}$ and $Q^{(k-1)}_{n,\alpha}$ are 
closed under multiplication by these elements.
This is immediate for $Q^{(k-1)}_{n,\alpha}$.  For $M_{n,\alpha}$, it follows from
($a.i_k$), which says that $M_{n,\alpha}$ is closed under multiplication by $T_{i,j}^r$
with $k \leq j \leq d$ and $1 \leq i < \alpha_k$ and $r \in R$.
\item Condition ($a'.ii'_{k-1}$) asserts that $N_{n,\alpha}$ is closed under multiplication
by $T_{i,j}^r$ with $k \leq j \leq d$ and $\alpha_{k-1} \leq i < \alpha_j$ and $r \in R_{\geq 0}$.
This follows from the fact that both $M_{n,\alpha}$ and $Q^{(k-1)}_{n,\alpha}$ are 
closed under multiplication by these elements.  This is immediate for $Q^{(k-1)}_{n,\alpha}$.
For $M_{n,\alpha}$, it follows from a combination of ($a.i_k$), which handles the cases
where $\alpha_{k-1} \leq i < \alpha_k$ and gives the stronger conclusion that
we can use $r \in R$ instead of just $r \in R_{\geq 0}$, and ($a.ii_k$), which handles the cases where 
$\alpha_k \leq i < \alpha_j$.  Here one might worry that ($a.ii_k$) requires 
$k+1 \leq j \leq d$ instead of $k \leq j \leq d$; however, the case $j=k$ is not needed
since no $i$ satisfies $\alpha_k \leq i < \alpha_k$.
\item Condition ($b'_{k-1}$) asserts that $N_{n,\alpha}$ is closed under the operators
$E_{i,\alpha_j}^r$ with $1 \leq j \leq k-1$ and $1 \leq i < \alpha_j$ and $r \in R$.  
This follows from the fact that both $M_{n,\alpha}$ and $Q^{(k-1)}_{n,\alpha}$ are
closed under these operators.  This is immediate for $Q^{(k-1)}_{n,\alpha}$.
For $M_{n,\alpha}$, it follows from ($b_k$), which says that $M_{n,\alpha}$ is closed
under the operators $E_{i,\alpha_j}^r$ with
$1 \leq j \leq k$ and $1 \leq i < \alpha_j$ and $r \in R$.
\item Condition ($c'_{k-1}$) asserts that $N_{n,\alpha}$ is homogeneous with respect to
\[
\Set{$T_{\alpha_j,j}$}{$1 \leq j \leq d$} \cup \Set{$T_{i,j}$}{$k+1 \leq j \leq d$ and $\alpha_k \leq i < \alpha_j$}.
\]
Condition ($c_k$) says that $M_{n,\alpha}$ is homogeneous with respect to this same
set, and this homogeneity is preserved when we intersect $M_{n,\alpha}$ with
$Q^{(k-1)}_{n,\alpha}$.
\end{compactitem}
We thus can define a poset map $\fM^{(k)} \rightarrow \fN^{(k-1)}$ taking $M \in \fM^{(k)}$
to $M \cap Q^{(k-1)}$.  We claim that this poset map is conservative.  In fact, it is
even injective.  Indeed, consider $M,M' \in \fM^{(k)}$.  Let $N = M \cap Q^{(k-1)}$
and $N' = M' \cap Q^{(k-1)}$, and assume that $N = N'$.  We claim that $M = M'$.  By
symmetry, it is enough to prove that $M \subset M'$.  Consider 
$(n,\alpha) \in \OI(d)$ and 
$x \in M_{n,\alpha}$.  We must prove that $x \in M'_{n,\alpha}$.  We have $x \in Q^{(k)}_{n,\alpha}$.  Setting
\begin{align*}
\bS &= \Set{$T_{i,j}$}{$1 \leq j \leq d$, $1 \leq i < \alpha_j$, $\alpha_{k-1} \leq i < \alpha_k$} \\
&=   \Set{$T_{i,j}$}{$k \leq j \leq d$, $\alpha_{k-1} \leq i < \alpha_k$},
\end{align*}
there exists some $\tau \in \Lambda_{n,\alpha}(\bS)$ such that $\tau x \in Q^{(k-1)}_{n,\alpha}$.  By
($a.i_k$), we have $\tau x \in M_{n,\alpha}$, and thus $\tau x \in N_{n,\alpha}$.  Since
$N = N' \subset M'$, we deduce that $\tau x \in M'_{n,\alpha}$.  Define 
$\tau^{-1} \in \Lambda_{n,\alpha}(\bS)$ to be the result of replacing all the $T_{i,j}^r$ terms
in $\tau$ with $T_{i,j}^{-r}$.  Another application of ($a.i_k$) shows that 
$\tau^{-1} \tau x = x \in M'_{n,\alpha}$, as desired.
 
\begin{substep}{2e}
\label{substep:2e}
For $0 \leq k \leq d-1$, we construct a conservative poset map $\fN^{(k)} \rightarrow \fM^{(k)}$.

\noindent
{\bf Notation defined:} none.
\end{substep}

Fix some $(n,\alpha) \in \OI(d)$ with $\alpha=(\alpha_1,\ldots,\alpha_d)$.
The most important difference between $\fM^{(k)}$ and $\fN^{(k)}$ is that by
($c_k$) the $\bk$-modules making up $\fM^{(k)}$ must be homogeneous with respect to
\[\bS_{n,\alpha,k} = \Set{$T_{\alpha_j,j}$}{$1 \leq j \leq d$} \cup \Set{$T_{i,j}$}{$k+1 \leq j \leq d$ and $\alpha_k \leq i < \alpha_j$ and $i \geq 1$},\]
while by ($c'_k$) the $\bk$-modules making up $\fN^{(k)}$ must only be homogeneous with
respect to
\[\bS_{n,\alpha,k+1} = \Set{$T_{\alpha_j,j}$}{$1 \leq j \leq d$} \cup \Set{$T_{i,j}$}{$k+2 \leq j \leq d$ and $\alpha_{k+1} \leq i < \alpha_j$}.\]
The main function of our poset map $\fN^{(k)} \rightarrow \fM^{(k)}$ will be to
achieve the needed increase in homogeneity.

For $x \in Q^{(k)}_{n,\alpha}$, we will define an ``initial term'' $\fin(x) \in Q^{(k)}_{n,\alpha}$ as follows.  Define
\[
\bS_{n,\alpha,k}' = \bS_{n,\alpha,k} \setminus \bS_{n,\alpha,k+1} = \Set{$T_{i,j}$}{$k+1 \leq j \leq d$ and $\max(\alpha_k,1) \leq i < \alpha_{k+1}$}.
\]
Recall that $R$ is identified as an additive group with $\bZ^{\lambda}$ and that 
$R_{\geq 0} = (\bZ_{\geq 0})^{\lambda} \subset R$.
Using the identification $R = \bZ^{\lambda}$, we will frequently speak of the coordinates of elements of $R$.
We define a total order on $\Lambda_{n,+}(\bS_{n,\alpha,k}')$ in two steps:
\begin{compactitem}
\item We first order $\bS_{n,\alpha,k}'$ by letting $T_{i,j} < T_{i',j'}$ if either $i < i'$ or if $i = i'$ and $j < j'$.  
\item We then order $\Lambda_{n,+}(\bS_{n,\alpha,k}')$ as follows.  Consider distinct $\tau,\tau' \in \Lambda_{n,+}(\bS_{n,\alpha,k}')$.  Enumerating
the elements of $\bS_{n,\alpha,k}'$ in increasing order as $T_{i_1,j_1},\ldots,T_{i_p,j_p}$, we can uniquely write
\[\tau = T_{i_1,j_1}^{r_1} \cdots T_{i_p,j_p}^{r_p} \quad \text{and} \quad \tau' = T_{i_1,j_1}^{r_1'} \cdots T_{i_p,j_p}^{r_p'}.\]
for some $r_i, r_j' \in R_{\geq 0}$.
Let $1 \leq q \leq p$ be the minimal number such that $r_q \neq r_q'$.  We then say that $\tau<\tau'$ if 
the first nonzero coordinate of $r_q' - r_q \in R = \bZ^{\lambda}$ is positive.
\end{compactitem}
For nonzero $x \in Q^{(k)}_{n,\alpha}$, we can uniquely write
\[x = \sum_{q=1}^{m} \tau_q y_q,\]
where for all $1 \leq q \leq m$ we have the following:
\begin{compactitem}
\item $\tau_q y_q \ne 0$ for all $q$.
\item $\tau_q \in \Lambda_{n,+}(\bS_{n,\alpha_k}')$, and the $\tau_q$ are enumerated in increasing order $\tau_1 < \tau_2 < \cdots < \tau_m$.
\item $y_q \in \bk[\Lambda_n(\bT_n \setminus \bS_{n,\alpha,k}')]$.
\end{compactitem}
We then define $\fin(x) = \tau_m y_m \in Q^{(k)}_{n,\alpha}$. We also set $\fin(0)=0$. We will call $\tau_m$ the {\bf initial variable} of $x$, though we remark that this terminology will not be used again until the final paragraph of this substep.

We now construct the poset map $\fN^{(k)} \rightarrow \fM^{(k)}$ as follows.  Consider $N \in \fN^{(k)}$.  For $(n,\alpha) \in \OI(d)$, define
$\fin(N)_{n,\alpha} \subset Q^{(k)}_{n,\alpha}$ to be the $\bk$-span of $\Set{$\fin(x)$}{$x \in N_{n,\alpha}$}$.
It is easy to see that $\fin(N)$ is an $\OI(d)$-submodule of $Q^{(k)}$.  We claim that
$\fin(N) \in \fM^{(k)}$.  To see this, we must check the conditions ($a.i_k$), ($a.ii_k$),  ($b_k$), and ($c_k$).
Consider some $(n,\alpha) \in \OI(d)$ with $\alpha = (\alpha_1,\ldots,\alpha_d)$.
\begin{compactitem}
\item We delay ($a.i_k$) until the end, so we start by verifying condition ($a.ii_k$), which 
asserts that $\fin(N)_{n,\alpha}$ is closed under multiplication by
$T_{i,j}^r$ with $k+1 \leq j \leq d$ and $\alpha_k \leq i < \alpha_j$ and $r \in R_{\geq 0}$.  This is immediate
from ($a'.ii'_k$), which asserts that $N$ is closed under multiplication by these same elements.
\item Condition ($b_k$) asserts that $\fin(N)_{n,\alpha}$ is closed under the operators $E_{i,\alpha_j}^r$
with $1 \leq j \leq k$ and $1 \leq i < \alpha_j$ and $r \in R$.  Condition ($b'_k$) says that $N_{n,\alpha}$
is closed under these operators.  To prove that this implies that $\fin(N)_{n,\alpha}$ is also closed under
these operators, it is enough to prove that for $x \in Q^{(k)}_{n,\alpha}$, we have 
\[\fin(E_{i,\alpha_j}^r(x)) = E_{i,\alpha_j}^r(\fin(x)).\]
To help the reader understand the argument below, we recommend reviewing the correspondence
between elements of $\Lambda_n$ and $n \times d$ matrices from Substep \ref{substep:1a}.  For nonzero $x$, write
\[x = \sum_{q=1}^{m} \tau_q y_q,\]
where for all $1 \leq q \leq m$ we have the following:
\begin{compactitem}
\item $\tau_q y_q \ne 0$ for all $q$.
\item $\tau_q \in \Lambda_{n,+}(\bS_{n,\alpha,k}')$, and the $\tau_q$ are enumerated in increasing order $\tau_1 < \tau_2 < \cdots < \tau_m$.
\item $y_q \in \bk[\Lambda_n(\bT_n \setminus \bS_{n,\alpha,k}')]$.
\end{compactitem}
Since $i < \alpha_j \leq \alpha_k$, for all $1 \leq q \leq m$ we have
\[E_{i,\alpha_j}^r(\tau_q) = \tau_q \tau_q' \quad \text{and} \quad E_{i,\alpha_j}^r(y_q) = y_q y_q'\]
for some $\tau_q' \in \Lambda_n(\bT_n \setminus \bS_{n,\alpha,k}')$ and 
$y_q' \in \bk[\Lambda_n(\bT_n \setminus \bS_{n,\alpha,k}')]$.  We thus have
\[E_{i,\alpha_j}^r(x) = \sum_{q=1}^m E_{i,\alpha_j}^r(\tau_q) \cdot E_{i,\alpha_j}^r(y_q) = \sum_{q=1}^m \tau_q (\tau_q' y_q y_q')\]
and
\[\fin(E_{i,\alpha_j}^r(x)) = \tau_m (\tau_m' y_m y_m') = E_{i,\alpha_j}^r(\fin(x)),\]
as desired.
\item Condition ($c_k$) asserts that $\fin(N)_{n,\alpha}$ is homogeneous with respect to
\[
\bS_{n,\alpha,k} = \Set{$T_{\alpha_j,j}$}{$1 \leq j \leq d$} \cup \Set{$T_{i,j}$}{$k+1 \leq j \leq d$ and $\alpha_k \leq i < \alpha_j$ and $i \geq 1$}.
\]
By ($c'_k$), the $\bk$-module $N_{n,\alpha}$ is homogeneous with respect to $\bS_{n,\alpha,k+1}$, and
the very definition of $\fin(N)_{n,\alpha}$ is
designed to improve this to $\bS_{n,\alpha,k}$.
\item We now finally verify ($a.i_k$), which asserts that $\fin(N)_{n,\alpha}$ is closed under multiplication by
$T_{i,j}^r$ with $k \leq j \leq d$ and $1 \leq i < \alpha_k$ and $r \in R$.  Condition ($a'.i'_k$) says that
$N_{n,\alpha}$ is closed under multiplication by $T_{i,j}^r$ with $k+1 \leq j \leq d$ and $1 \leq i \leq \alpha_j$ and
$r \in R$, and this is preserved when we pass to $\fin(N)_{n,\alpha}$.  We thus must only verify that $\fin(N)_{n,\alpha}$
is closed under multiplication by $T_{i,k}^r$ with $1 \leq i < \alpha_k$ and $r \in R$.  Consider some
$x \in \fin(N)_{n,\alpha}$.  We must show that $T_{i,k}^r x \in \fin(N)_{n,\alpha}$.  Using the already verified
condition ($c_k$), we can assume that $x = \tau y$ with 
\[
\tau \in \Lambda_{n,\alpha}(\bS_{n,\alpha,k}) \quad \text{and} \quad y \in \bk[\Lambda_{n,\alpha}(\bT_{n} \setminus \bS_{n,\alpha,k})].
\]
Using the already verified condition ($b_k$), we know that $E_{i,\alpha_k}^r(x) \in \fin(N)_{n,\alpha}$.  We then
calculate that
\[E_{i,\alpha_k}^r(x) = E_{i,\alpha_k}^r(\tau y) = E_{i,\alpha_k}^r(\tau) E_{i,\alpha_k}^r(y) = (\tau T_{i,k}^r \tau') \cdot y,\]
where $\tau'$ is a product of elements of $\Set{$T_{i,j'}^{r'}$}{$k+1\leq j'\leq d$, $r' \in R$}$ that depends on $\tau$ and $r$ and $i$ and $k$. Letting $(\tau')^{-1}$ be the result of replacing each $T_{i,j'}^{r'}$ in $\tau'$ with $T_{i,j'}^{-r'}$, our already
verified cases of ($a.i_k$) imply that $\fin(N)_{n,\alpha}$ is closed under multiplication by $(\tau')^{-1}$.  In particular,
\[(\tau')^{-1} \cdot E_{i,\alpha_k}^r(x) = (\tau')^{-1} \cdot (T_{i,k}^r \tau' \tau) \cdot y = T_{i,k}^r \tau y = T_{i,k}^r x \in \fin(N)_{n,\alpha},\]
as desired.
\end{compactitem}
The map $N \mapsto \fin(N)$ is thus a poset map from $\fN^{(k)}$ to $\fM^{(k)}$.

We claim that this is a conservative poset map.  Indeed, consider $N_1,N_2 \in \fN^{(k)}$ such that
$N_1 \subset N_2$ and $\fin(N_1) = \fin(N_2)$.  We must
prove that $N_1 = N_2$.  Assume otherwise.  Let $(n,\alpha) \in \OI(d)$ be such that
$(N_1)_{n,\alpha} \subsetneq (N_2)_{n,\alpha}$.  Pick
$x \in (N_2)_{n,\alpha}$ such that $x \notin (N_1)_{n,\alpha}$ and such that the initial variable (see the second paragraph
of this substep for the definition of this) of $x$ is as small as possible among elements with these properties (this is possible since with the above ordering
$\Lambda_{n,+}(\bS_{n,\alpha,k}')$ does not have any infinite strictly decreasing chains).  Since $\fin(N_1) = \fin(N_2)$, we can find some $x' \in (N_1)_{n,\alpha}$ such that $\fin(x') = \fin(x)$.  But then $x - x' \in (N_2)_{n,\alpha}$ and $x-x' \notin (N_1)_{n,\alpha}$, while the initial variable of $x-x'$ is strictly smaller than the initial variable of $x$, a contradiction.

\begin{step}{3}
\label{step:3}
We prove that $\fM^{(0)}$ is noetherian.
\end{step}

In Step \ref{step:2}, we reduced the theorem to showing that $\fM^{(0)}$ is 
noetherian.  In this step, we will prove this.  Defining
\[\Lambda_+ = \bigsqcup_{(n,\alpha) \in \OI(d)} \Lambda_{n,\alpha,+},\]
in Substep \ref{substep:3a} we first construct a useful partial ordering
on $\Lambda_+$ and prove that it is a well partial ordering (see below
for the definition of this).  In Substep \ref{substep:3b}, we use this
partial ordering to prove that $\fM^{(0)}$ is noetherian.

\begin{substep}{3a}
\label{substep:3a}
We construct a partial ordering on $\Lambda_+$ and prove that it is a well partial ordering.

\noindent
{\bf Notation defined:} none.
\end{substep}

We define a partial ordering on $\Lambda_+$ as follows.  Consider 
$\tau,\tau' \in \Lambda_+$.  We say that $\tau \preceq \tau'$ if the following condition
is satisfied:
\begin{compactitem}
\item Let $(n,\alpha),(n',\alpha') \in \OI(d)$ be such that
$\tau \in \Lambda_{n,\alpha,+}$ and $\tau' \in \Lambda_{n',\alpha',+}$.  We then
require that there exists an $\OI(d)$-morphism 
$\iota\colon (n,\alpha) \rightarrow (n',\alpha')$ and some 
$\tau'' \in \Lambda_{n',\alpha',+}$ such that $\tau' = \tau'' \cdot \iota_{\ast}(\tau)$.
\end{compactitem}
It is clear that this is a partial ordering.

The main goal of this substep (which we will accomplish at the end after a number
of preliminaries) is to prove that this partial ordering on $\Lambda_+$
is a well partial ordering, whose definition is as follows.
A poset $(\fP,\prec)$ is {\bf well partially ordered} if every infinite sequence
of elements of $\fP$ contains an infinite weakly increasing subsequence.
See \cite{KruskalSurvey} for a survey about
well partial orderings.  If $\fP$ and $\fP'$ are posets, then we will endow $\fP \times \fP'$ with the
ordering where $(p_1,p_1') \preceq (p_2,p_2')$ if and only $p_1 \preceq p_2$ and $p_1' \preceq p_2'$.  If $\fP$
and $\fP'$ are both well partially ordered, then so is $\fP \times \fP'$ (quick proof: given an infinite sequence
in $\fP \times \fP'$, first pass to a subsequence to make the first coordinate weakly increasing, then pass
to a further subsequence to make the second coordinate also weakly increasing).

Recall that we have identified the additive group of $R$ with $\bZ^{\lambda}$ and that $R_{\geq 0} = (\bZ_{\geq 0})^{\lambda}$.
Using these identifications, we will speak of the coordinates of elements of $R$ and $R_{\geq 0}$.
Endow the set $R_{\geq 0} \cup \{\spadesuit\}$ with the following partial ordering:
\begin{compactitem}
\item $\spadesuit$ is not comparable to any element of $R_{\geq 0}$.
\item For $r_1,r_2 \in R_{\geq 0}$, let $r_1 \preceq r_2$ if all the coordinates of $r_2 - r_1$ are nonnegative.
\end{compactitem}
Since the usual ordering on $\bZ_{\geq 0}$ is a well partial ordering, the restriction of our partial ordering
to $R_{\geq 0} = (\bZ_{\geq 0})^{\lambda}$ is also a well partial ordering.  From this, it is easy to see that
our partial ordering on $R_{\geq 0} \cup \{\spadesuit\}$ is also a well partial ordering.  The product ordering
on $(R_{\geq 0} \cup \{\spadesuit\})^d$ is thus also a well partial ordering.

Let $\cW$ denote the set of finite words in the alphabet $(R_{\geq 0} \cup \{\spadesuit\})^d$.  Endow $\cW$ with the partial ordering where $w_1,w_2 \in \cW$ satisfy $w_1 \preceq w_2$ if and only if the following condition is satisfied.  Write $w_1 = \ell_1 \cdots \ell_{n}$ and $w_2 = \ell_1' \cdots \ell_{n'}'$ with each $\ell_i$
and $\ell'_{i'}$ an element of $(R_{\geq 0} \cup \{\spadesuit\})^d$.  We then require that there exists a strictly
increasing function $\iota\colon [n] \hookrightarrow [n']$ such that $\ell_i \preceq \ell'_{\iota(i)}$ for all $1 \leq i \leq n$. This partial ordering on $\cW$ is a well partial ordering by Higman's lemma \cite[Theorem 4.3]{HigmanLemma}.

As promised, we now prove that the partial ordering on $\Lambda_+$
defined above is a well partial ordering.
Let $\Psi\colon \Lambda_+ \rightarrow \cW$ be the following set function.
Consider $\tau \in \Lambda_{n,\alpha,+} \subset \Lambda_+$.
Write $\alpha = (\alpha_1,\ldots,\alpha_d)$, and expand out $\tau$ as
\[\tau = \prod_{\substack{1 \leq j \leq d \\ 1 \leq i \leq \alpha_j}} T_{i,j}^{r_{i,j}} \quad \quad (r_{i,j} \in R_{\geq 0}).\]
For $1 \leq j \leq d$ and $1 \leq i \leq \alpha_j$, define $\orr_{i,j} \in R_{\geq 0} \cup \{\spadesuit\}$ via the formula
\[\orr_{i,j} = \begin{cases}
r_{i,j} & \text{if $1 \leq i < \alpha_j$}, \\
\spadesuit & \text{if $i = \alpha_j$}.
\end{cases}\]
We remark that by definition we have $r_{\alpha_j,j} = 1$ for all $1 \leq j \leq d$.  For $1 \leq i \leq n$, we define
\[\ell_i = (\orr_{i,1}, \orr_{i,2}, \ldots, \orr_{i,d}) \in (R_{\geq 0} \cup \{\spadesuit\})^d.\]
Finally, we define
\[\Psi(\tau) = \ell_1 \ell_2 \cdots \ell_n.\]
It is clear that $\Psi$ is injective.  What is more, it is immediate from the
definitions that for all $\tau,\tau' \in \Lambda_+$ we have
\[
\tau \preceq \tau' \quad \text{if and only if } \Psi(\tau) \preceq \Psi(\tau').
\]
The key point here is that if we interpret elements of $\Lambda_+$ as matrices
with $d$ columns and entries in $R_+$, the effect of an $\OI(d)$-morphism on these
matrices is to insert extra rows of zeros.  Since $\Psi$ is injective
and $\cW$ is well partially ordered, so is $\Lambda_+$, as claimed.

\begin{substep}{3b}
\label{substep:3b}
We prove that the poset $\fM^{(0)}$ is noetherian.

\noindent
{\bf Notation defined:} none.
\end{substep}

Let $(\Lambda_+,<)$ be the partially ordered set constructed in Substep \ref{substep:3a}.
By definition, $\fM^{(0)}$ is the poset of all $\OI(R)$-modules $M \subset Q^{(0)}$ such that for all $(n,\alpha) \in \OI(d)$ with $\alpha = (\alpha_1,\ldots,\alpha_d)$,
the $\bk$-module $M_{n,\alpha} \subset \bk[\Lambda_{n,\alpha,+}]$ satisfies the following two
properties:
\begin{compactitem}
\item[($\dagger$)] It is closed under multiplication by $T_{i,j}^r$ for all $1 \leq j \leq d$ and $1 \leq i < \alpha_j$ and 
$r \in R_{\geq 0}$.
\item[($\dagger\dagger$)] It is homogeneous with respect to all the possible $T_{i,j}$, i.e.,\ with respect to
\[
\Set{$T_{i,j}$}{$1 \leq j \leq d$ and $1 \leq i \leq \alpha_j$}.
\]
\end{compactitem}
Property ($\dagger\dagger$) implies that $M_{n,\alpha}$ is spanned as a 
$\bk$-module by elements of the form
$c \cdot \tau$ with $c \in \bk$ and $\tau \in \Lambda_{n,\alpha,+}$.  Property
($\dagger$) implies the following:
\begin{compactitem}
\item[($\dagger\dagger\dagger$)] 
Let $\tau_1 \in \Lambda_{n_1,\alpha_1,+} \subset \Lambda_+$ and
$\tau_2 \in \Lambda_{n_2,\alpha_2,+} \subset \Lambda_+$ and $c \in \bk$ be such that
$c \cdot \tau_1 \in M_{n_1,\alpha_1}$ and $\tau_1 \leq \tau_2$.  Then 
$c \cdot \tau_2 \in M_{n_2,\alpha_2}$.
\end{compactitem}
Now assume for the sake of contradiction that $\fM^{(0)}$ is not noetherian.  Let
\[M_1 \subsetneq M_2 \subsetneq M_3 \subsetneq \cdots\]
be an infinite strictly ascending chain in it.  By ($\dagger\dagger$), for all
$i \geq 1$ there exists some $(n_i,\alpha_i) \in \OI(d)$ and some 
$\tau_i \in \Lambda_{n_i,\alpha_i,+}$ and some $c_i \in \bk$ such that
\begin{equation}
\label{eqn:ourcontradiction}
c_i \cdot \tau_i \in (M_i)_{n_i,\alpha_i} \setminus (M_{i-1})_{n_i,\alpha_i}.
\end{equation}
Since our partial ordering on $\Lambda_+$ is a well partial ordering, we can replace
our sequence $\{M_i\}_{i=1}^{\infty}$ with a subsequence and assume that
\[\tau_1 \leq \tau_2 \leq \tau_3 \leq \cdots.\]
For $i \leq i'$, condition ($\dagger\dagger\dagger$) implies that
\[c_i \cdot \tau_{i'} \in (M_i)_{n_{i'},\alpha_{i'}}.\]
For all $q \ge 1$, applying this repeatedly with $i'=q+1$ we see that for
all $1 \leq q' \leq q$ we have
\[
c_{q'} \cdot \tau_{q+1} \in (M_{q'})_{n_{q+1},\alpha_{q+1}} \subset (M_q)_{n_{q+1},\alpha_{q+1}}.
\]
Defining $I_q$ to be the ideal of $\bk$ generated by $\{c_1,\ldots,c_q\}$, this implies
that for all $d \in I_q$ we have
\[
d \cdot \tau_{q+1} \in (M_q)_{n_{q+1},\alpha_{q+1}}.
\]
Since $\bk$ is noetherian, we can pick $q \gg 0$ such that $I_q = I_{q+1}$; in particular,
$c_{q+1} \in I_q$.  But this implies that
\[c_{q+1} \cdot \tau_{q+1} \in (M_q)_{n_{q+1},\alpha_{q+1}},\]
contradicting \eqref{eqn:ourcontradiction}.

\subsection{A converse to Theorem \ref{mainthm3}}
\label{section:converse}

We now prove a converse to Theorem~\ref{mainthm3}:

\begin{proposition} \label{prop:noethconverse}
Let $R$ be a ring and $\bk$ be a commutative ring such that the category of
$\OVI(R)$-modules over $\bk$ is locally noetherian.  Then $\bk$ is noetherian
and the additive group of $R$ is finitely generated.
\end{proposition}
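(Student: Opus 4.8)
The plan is to derive \emph{both} conclusions from the single assertion that the group algebra $\bk[U_2(R)]$ is noetherian, where $U_2(R) = \Aut_{\OVI(R)}(R^2)$. Since $U_2(R)$ is just the additive group $(R,+)$, this algebra is $\bk[(R,+)]$, which is commutative. Granting its noetherianity, the augmentation $\bk[(R,+)] \to \bk$ realizes $\bk$ as a quotient of a noetherian ring, hence noetherian, which is the first claim; and the second claim reduces to the elementary fact that an abelian group $A$ with $\bk[A]$ noetherian must be finitely generated (equivalently: $A$ not finitely generated $\Rightarrow$ $\bk[A]$ not noetherian).

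First I would establish that $\bk[U_2(R)]$ is noetherian. Consider the principal projective $P := P_{R^2}$: it is finitely generated, hence noetherian by hypothesis, hence satisfies the ascending chain condition on $\OVI(R)$-submodules. We have $P_n = \bk[\Hom_{\OVI(R)}(R^2,R^n)]$, which vanishes for $n<2$ and equals $\bk[\Aut_{\OVI(R)}(R^2)] = \bk[U_2(R)]$ for $n=2$. For a left ideal $I \subseteq P_2 = \bk[U_2(R)]$, let $\langle I\rangle$ be the $\OVI(R)$-submodule of $P$ generated by $I$. Since $P$ vanishes below degree $2$ and the $\Aut(R^2)=U_2(R)$-action on $P_2$ is left multiplication, one computes $\langle I\rangle_2 = \bk[U_2(R)]\cdot I = I$. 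Thus $I \mapsto \langle I\rangle$ is an injective, order-preserving map from the poset of left ideals of $\bk[U_2(R)]$ to the poset of submodules of $P$ which reflects inclusions (apply $(-)_2$), so the ascending chain condition transfers and $\bk[U_2(R)]$ is left-noetherian, hence noetherian (it is commutative). The one point requiring a little care is that $\OVI(R)$ has no terminal object, so one cannot literally pass to the one-object category on $R^2$; instead one uses that $P_{R^2}$ carries the regular representation of $U_2(R)$ in its bottom nonzero degree and that the generated-submodule operation recovers the left ideal there.

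Finally I would prove the group-theoretic input: if $A$ is an abelian group that is not finitely generated, then $\bk[A]$ is not noetherian. Pick $a_1,a_2,\dots \in A$ with $a_{n+1} \notin A_n := \langle a_1,\dots,a_n\rangle$, so $A_1 \subsetneq A_2 \subsetneq \cdots$, and set $I_n := \ker\bigl(\bk[A] \to \bk[A/A_n]\bigr)$, the ideal generated by $\{a-1 : a \in A_n\}$. Then $I_n \subseteq I_{n+1}$, and this inclusion is strict because the natural surjection $\bk[A/A_n] \to \bk[A/A_{n+1}]$ is induced by the non-injective homomorphism $A/A_n \to A/A_{n+1}$ (its kernel $A_{n+1}/A_n$ is nontrivial, so for $x \in A_{n+1}\setminus A_n$ the nonzero element $[\bar x]-1$ of $\bk[A/A_n]$ dies), hence $I_{n+1} = \ker(\bk[A]\to\bk[A/A_{n+1}])$ is a preimage of a nonzero ideal and strictly contains $I_n$. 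This gives an infinite strictly ascending chain of ideals of $\bk[A]$. Applying this with $A=(R,+)$ and contraposing finishes the proof. I do not anticipate a serious obstacle here: the proposition is genuinely "easy"; the only things to be careful about are the bookkeeping in the middle paragraph (identifying $\langle I\rangle_2$ with $I$, and checking "left ideal" is the correct notion since $P_{R^2}(g)$ is left multiplication by $g$) and the harmless choice of the subgroup chain.
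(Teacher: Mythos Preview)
Your proof is correct and follows essentially the same route as the paper's: reduce to noetherianity of $\bk[U_2(R)]=\bk[(R,+)]$ via the principal projective at $R^2$, then use the augmentation for $\bk$ and the subgroup-to-augmentation-ideal correspondence for finite generation of $(R,+)$. The only cosmetic difference is that the paper quotients $P_{R^2}$ by the submodule generated in degrees $>2$ so that submodules of the quotient are \emph{exactly} left ideals of $\bk[U_2(R)]$, whereas you embed the poset of left ideals into the poset of submodules of $P_{R^2}$ itself; both arguments are equally valid.
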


\begin{proof}
Let $P$ be the principal projective $\OVI(R)$-module associated to $R^2$ and let
$P^+$ be the submodule of $P$ generated by all elements lying in $P_n$ with $n>2$.
Then $P/P^+$ is a finitely generated $\OVI(R)$-module with
\[(P/P^+)_n = \begin{cases}
\bk[U_2(R)] & \text{if $n=2$},\\
0 & \text{otherwise}.
\end{cases}\]
It follows that an $\OVI(R)$-submodule of $P/P^+$ is exactly the same thing as a left
ideal in $\bk[U_2(R)]$, so $\bk[U_2(R)]$ is a left-Noetherian
ring.  The group $U_2(R)$ is simply the additive
group underlying $R$, so the proposition follows from the following lemma.
\end{proof}

\begin{lemma}
Let $\bk$ be a commutative ring and let $A$ be an
abelian group such that $\bk[A]$ is 
noetherian.  Then $\bk$ is noetherian and $A$ is finitely generated.
\end{lemma}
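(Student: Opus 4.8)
The plan is to establish the two conclusions separately, the first being essentially immediate and the second requiring only a small amount of work with augmentation-type ideals.

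The noetherianity of $\bk$ is free: the augmentation homomorphism $\bk[A] \to \bk$, which sends every group element of $A$ to $1 \in \bk$, is a surjective ring map, so $\bk$ is a quotient of the noetherian ring $\bk[A]$ and is therefore noetherian. (Throughout we use the standing convention that $1 \neq 0$, so $\bk \neq 0$; this will matter for the second part.)

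For the finite generation of $A$, I would argue by contradiction. Recall that an abelian group, regarded as a $\bZ$-module, is finitely generated if and only if it is noetherian, i.e.\ if and only if it satisfies the ascending chain condition on subgroups. So if $A$ is not finitely generated there is a strictly increasing chain of subgroups $H_1 \subsetneq H_2 \subsetneq \cdots$ in $A$. To a subgroup $H \le A$ I would associate the ideal $I_H = \ker(\bk[A] \to \bk[A/H])$, where the map is induced by the quotient homomorphism $A \to A/H$; explicitly $I_H$ is generated by the elements $[h]-[0]$ for $h \in H$, writing $[a]$ for the group element of $A$ inside $\bk[A]$. The next step is to observe that $H \subseteq H'$ forces $I_H \subseteq I_{H'}$ — immediate, since $\bk[A] \to \bk[A/H']$ factors through $\bk[A] \to \bk[A/H]$ — and, crucially, that this inclusion is \emph{strict} when $H \subsetneq H'$. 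For this, note that $I_{H'}/I_H$ is identified with the kernel of the induced map $\bk[A/H] \to \bk[A/H']$; since $\bk[A/H]$ is the free $\bk$-module on the underlying set $A/H$ and the set map $A/H \to A/H'$ is not injective (its fibers are the cosets of the nontrivial subgroup $H'/H$), this kernel contains an element $[a]-[a']$ with $a \ne a'$ in $A/H$, which is nonzero because $\bk \ne 0$. Feeding the chain of subgroups through this construction yields a strictly increasing chain $I_{H_1} \subsetneq I_{H_2} \subsetneq \cdots$ of ideals of $\bk[A]$, contradicting its noetherianity; hence $A$ is finitely generated.

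The only point requiring any care — the ``main obstacle'', modest as it is — is the strictness of $I_H \subsetneq I_{H'}$, that is, recovering enough information about $H$ from $I_H$ to distinguish distinct terms of the chain. This is exactly where the hypothesis $\bk \ne 0$ enters, via the fact that a group algebra is a free module on the underlying group; everything else in the argument is formal.
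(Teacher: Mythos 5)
Your proposal is correct and follows essentially the same route as the paper: the augmentation map for noetherianity of $\bk$, and the correspondence $B \mapsto I_B$ with $\bk[A]/I_B \cong \bk[A/B]$ to transfer the ascending chain condition from ideals of $\bk[A]$ to subgroups of $A$. Your explicit verification that $H \subsetneq H'$ gives $I_H \subsetneq I_{H'}$ (using freeness of the group algebra and $\bk \neq 0$) is just a slightly more hands-on phrasing of the paper's observation that $B$ is recoverable from $I_B$.
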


\begin{proof}
Since $\bk$ is a quotient of the noetherian ring $\bk[A]$ via the augmentation homomorphism, it is noetherian. For a subgroup $B$ of $A$, let $I_B$ be the ideal of $\bk[A]$ generated by $[b]-[0]$ with $b \in B$. Then $\bk[A]/I_B=\bk[A/B]$, and so $B$ can be recovered from $I_B$ as the elements $b \in A$ such that $[b]-[0] \in I_B$. Suppose that $B_{\bullet}$ is an ascending chain of subgroups of $A$. Then $I_{B_{\bullet}}$ is an ascending chain of ideals in $\bk[A]$ and thus stabilizes. Thus the chain $B_{\bullet}$ stabilizes as well, and so $A$ is noetherian (and thus finitely generated) as an abelian group.
\end{proof}

\section{Homology of \texorpdfstring{$\OVI$}{OVI}-modules} \label{s:hovi}

In this section, $R$ denotes a (not necessarily commutative) ring whose additive group is a finitely generated abelian group and $\bk$ denotes a commutative noetherian ring.  Our goal is to prove Theorem~\ref{mainthm2} from the introduction,
which says that if $M$ is a finitely generated $\OVI$-module then $\bH_i(\bU,M)$ is a finitely generated $\OI$-module
for all $i \geq 0$.  This theorem is proved in \S \ref{section:mainthm2proof} below after some preliminaries.  We then prove in \S \ref{section:mainthm2variant} an analogue of Theorem \ref{mainthm2} 
where we allow upper triangular matrices that are not necessarily unipotent.

\subsection{Homology of some $\OI$-groups}
\label{ss:oihom}

Recall that a group $\Gamma$ is of type $\rF\rP$ over $\bk$ if the trivial $\bk[\Gamma]$-module $\bk$ admits a projective resolution $\bP_\bullet$ such that each $\bP_i$ is a finitely generated $\bk[\Gamma]$-module. In fact, it is equivalent to ask that each $\bP_i$ be a finitely generated free module; see \cite[Theorem VIII.4.3]{brown}.  Many natural classes of groups are of type $\rF\rP$ including finite groups, finitely generated abelian groups, and lattices in semisimple Lie groups. See \cite[Chapter VIII]{brown} for more information.

\begin{proposition} \label{prop:oigphom}
Let $A$ be a group of type $\rF\rP$ over $\bk$ and let $\bE$ be the $\OI$-group $[n] \mapsto A^n$.  Let $M$ be an $\bE$-module which is finitely generated as an $\OI$-module. 
The following then hold.
\begin{compactenum}[\indent \rm (a)]
\item The $\OI$-module $\bH_i(\bE, M)$ is finitely generated for all $i \geq 0$.
\item Suppose $A$ is abelian. Let $C \subset A$ be a finite index subgroup, let $A^n_C$ denote the subgroup $\{(a_1,\dots,a_n) \in A^n \mid a_1 + \cdots + a_n \in C\}$, and let $\bE_C$ be the $\OI$-group $[n] \mapsto A^n_C$. Then the $\OI$-module $\bH_i(\bE_C, M)$ is finitely generated for all $i \geq 0$.
\end{compactenum}
\end{proposition}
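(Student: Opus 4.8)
The plan is to prove both parts simultaneously by induction, exploiting the reduced-shift criterion (Proposition~\ref{prop:redshift}) together with the interaction between left Kan extension and the shift functor (Proposition~\ref{prop:kanshift}). The key structural input is the following: for the $\OI$-group $\bE$ with $\bE_n = A^n$, there is a splitting $\bE_n = A^{n-1} \times A$ compatible in $n$, i.e.\ writing $\bE' $ for the $\OI(1)$-group with $\bE'_{n,\lambda} = A^{\lambda-1}\times A^{n-\lambda}$ and noting that the ``last coordinate'' gives a split surjection $\bE_n \twoheadrightarrow A$ with kernel $A^{n-1}$, we can feed this into the Hochschild--Serre machinery of Proposition~\ref{prop:hochserre}. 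Concretely, first I would set up, for a finitely generated $\bE$-module $M$, the short exact sequence of $\OI$-groups $1 \to \bK \to \bE \to \bA \to 1$ where $\bA_n = A$ (constant) acts via the last coordinate, and $\bK_n = A^{n-1}$. Then Proposition~\ref{prop:hochserre} gives a spectral sequence of $\OI$-modules $\rE^2_{p,q} = \bH_p(\bA, \bH_q(\bK, M)) \Rightarrow \bH_{p+q}(\bE, M)$. Wait --- one subtlety: Proposition~\ref{prop:hochserre} is stated for $\ul{\bk}$ coefficients, so before anything I would reprove (or reinterpret) the needed piece of it with a general $\bE$-module $M$; the argument is identical, the only change being that $\bK$ no longer acts trivially on $M$, so $\bH_q(\bK, M)$ is the relevant $\bA$-module rather than a module with trivial $\bA$-action inherited from conjugation on $M=\ul{\bk}$.

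The crux of the induction is to bound $\bH_i(\bE, M)$ in terms of the \emph{reduced shift} $\ol\Sigma$. Observe that $\Sigma(\bH_i(\bE,M)) = \bH_i(\bE_{\bullet + 1}, M_{\bullet+1})$, and the inclusion $A^{n-1} \hookrightarrow A^n$ (as the first $n-1$ coordinates) is exactly the $\OI$-structure map, so $\bK_n = A^{n-1}$ realizes $\Sigma$ on homology: $\bH_q(\bK, M) = \Sigma(\text{something})$ plus a correction. More precisely the point is that $\bH_*(\bK,M)$, as an $\OI$-module, is built from $\bH_*$ of $A^{n-1}$ with coefficients in the restriction of $M_n$ --- and here I would use that $M$, being finitely generated over $\OI$, has $\ol\Sigma(M)$ finitely generated, so that the restriction of $M$ to the smaller $A^{n-1}$ differs from a shift of $M$ by a quotient that is again finitely generated. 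Running the spectral sequence, each $\rE^2_{p,q} = \bH_p(\bA, \bH_q(\bK,M))$ has $\bH_p(\bA,-) = \rH_p(A, -)$ applied pointwise (since $\bA$ is the constant group $A$), and because $A$ is of type $\rF\rP$ over $\bk$ this is computed by a \emph{finite free} resolution, hence $\bH_p(\bA, N)$ is a subquotient of $N^{\oplus b_p}$ for fixed $b_p = \operatorname{rank}$ of the $p$th term; in particular $\bH_p(\bA, N)$ is a finitely generated $\OI$-module whenever $N$ is. So by induction on $i$ (using that $\bH_q(\bK, M)$ for $q < i$ is finitely generated, which is the inductive hypothesis applied to the shift, via Proposition~\ref{prop:redshift}), every $\rE^2_{p,q}$ with $p+q = i$ and $q < i$ is finitely generated, and the $q = i$, $p = 0$ term is $\bH_i(\bK, M)_n = \bH_i(A^{n-1}, M_n)$ whose reduced shift is controlled by the reduced shift of $\bH_i(\bE,M)$ itself --- this is the self-referential step that Proposition~\ref{prop:redshift} is designed to close off. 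Assembling: $\ol\Sigma(\bH_i(\bE,M))$ is finitely generated (built from finitely generated pieces of the spectral sequence), and $\bH_i(\bE,M)_0 = \rH_i(\{1\}, M_0)$ is either $M_0$ (for $i=0$) or $0$, hence finitely generated over $\bk$; Proposition~\ref{prop:redshift} then gives that $\bH_i(\bE, M)$ is finitely generated, completing part~(a).

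For part~(b), the group $A^n_C$ is the kernel of the composite $A^n \twoheadrightarrow A \twoheadrightarrow A/C$, a \emph{finite} group (since $C$ has finite index). So there is a short exact sequence of $\OI$-groups $1 \to \bE_C \to \bE \to \underline{A/C} \to 1$ with $\underline{A/C}$ the constant finite group. Running the corresponding Hochschild--Serre spectral sequence of $\OI$-modules, $\rE^2_{p,q} = \rH_p(A/C, \bH_q(\bE_C, M)) \Rightarrow \bH_{p+q}(\bE, M)$. Now a dévissage/induction on $i$: since $A/C$ is finite it is of type $\rF\rP$ over $\bk$ (one does need $\bk$ noetherian, which is in force), so $\rH_p(A/C, -)$ takes finitely generated $\OI$-modules to finitely generated $\OI$-modules by the same pointwise-finite-free-resolution argument. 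Hence from the spectral sequence, since the abutment $\bH_i(\bE, M)$ is finitely generated by part~(a) and the $\rE^2_{p,q}$ with $p + q = i$, $p > 0$ are subquotients of $\rH_p(A/C, \bH_q(\bE_C,M))$, an upward induction lets us peel these off; the surviving term $\rE^2_{0,i} = \rH_0(A/C, \bH_i(\bE_C, M)) = \bH_i(\bE_C, M)_{A/C}$, the coinvariants. Concluding finite generation of $\bH_i(\bE_C, M)$ itself from finite generation of its $A/C$-coinvariants is standard when $|A/C|$ is invertible in $\bk$ (the coinvariants contain a direct summand, or rather the module injects into them after inverting $|A/C|$), but in general over an arbitrary noetherian $\bk$ it is not automatic, so here I would instead argue directly: repeat the reduced-shift strategy of part~(a), now with $\bK_n = A^{n-1}_? $ --- more carefully, $\bE_C$ also admits a ``last coordinate'' type filtration ($A^n_C \cap (A^{n-1}\times 1) = A^{n-1}$ and $A^n_C / A^{n-1} \cong A$, since fixing the first $n-1$ coordinates the $n$th can still be anything subject to the sum lying in $C$, which is a single $C$-coset's worth, i.e.\ still a copy of $A$ as a set but the extension is by $A$), so the very same induction as in (a) applies verbatim with $\bE_C$ in place of $\bE$, $\bK_n = A_{C-(\text{shifted})}^{n-1}$ in place of $A^{n-1}$, and part~(a)'s conclusion used as the base of a mutual induction.

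\textbf{Main obstacle.} The delicate point is the self-referential nature of the reduced-shift argument: $\bH_q(\bK, M)_n$ is literally $\bH_q$ of the smaller group $A^{n-1}$ (or $A_C^{n-1}$) with coefficients in the \emph{same} module $M_n$, which is $(\Sigma M)_{n-1}$ only up to the finitely-generated correction measured by $\ol\Sigma(M)$; one must carefully bookkeep that applying $\bH_q$ to the defining sequence $0 \to M \to \Sigma M \to \ol\Sigma M \to 0$ yields a long exact sequence of $\OI$-modules relating $\bH_q(\bK, M)$, a shift of $\bH_q(\bE,M)$, and $\bH_q$ of a finitely generated $\OI$-module (coefficients $\ol\Sigma M$), and then invoke the inductive hypothesis on $i$ plus Proposition~\ref{prop:redshift} in exactly the right order so that the induction is not circular. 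A secondary nuisance is justifying, in part~(b) over a general noetherian $\bk$ with no assumption that $|A/C|$ is invertible, that one really can run the same shift induction for $\bE_C$ --- i.e.\ that $\bE_C$ has the analogous ``split by last coordinate'' structure --- which requires a small but honest check that the relevant subquotient groups are again of the form $A^{\bullet}_{C'}$ for a finite-index $C' \subset A$.
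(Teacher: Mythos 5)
Your overall strategy --- induct on $i$ via a split extension of $\OI$-groups, Proposition~\ref{prop:hochserre}, and the reduced-shift criterion of Proposition~\ref{prop:redshift} --- is genuinely different from the paper's proof; it is essentially the \S\ref{s:hovi} strategy for $\bU$ transplanted to the abelian case. The paper argues much more directly: choose a resolution $\bF_\bullet \to \bk$ by finitely generated free $\bk[A]$-modules with $\bF_0 = \bk[A]$; then $(\bF^{\otimes n})_\bullet$ resolves $\bk$ over $\bk[A^n]$, the $i$th terms assemble into an $\OI$-module $X(i)$ generated in finite degree (the transition maps insert $1 \in \bF_0$ into the new slots), and $\bH_i(\bE,M)$ is the homology of the complex $Y(i) = (X(i)\otimes M)_{A^{\bullet}}$; Corollary~\ref{cor:OIF} plus the fact that each $Y(i)_n$ is a finitely generated $\bk$-module gives finite generation of each $Y(i)$, and local noetherianity (Corollary~\ref{cor:OIgrob}) finishes. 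Part (b) is then a one-line remark: the restriction of $\bF^{\otimes n}$ to the finite-index subgroup $A^n_C$ is still a resolution by finitely generated modules. No spectral sequence and no induction on $i$ are needed.

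The genuine gap in your write-up is the very first structural claim: there is no short exact sequence of $\OI$-groups $1 \to \bK \to \bE \to \bA \to 1$ with $\bA$ the \emph{constant} group $A$ given by the last coordinate. For an order-preserving injection $f\colon [n]\to[m]$ with $f(n)<m$, the structure map $A^n \to A^m$ sends the last coordinate of the source into the subgroup indexed by $[m-1]$, so the last-coordinate projection is not a natural transformation, and the section $a \mapsto (1,\dots,1,a)$ is not one either; hence Proposition~\ref{prop:hochserre} does not apply to your sequence, and the identification of $\bH_p(\bA,-)$ with a pointwise $\rH_p(A,-)$ computed from a single finite free resolution breaks down (the quotient $\OI$-group one actually gets has transition maps that are sometimes trivial). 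The fix is exactly the paper's device in \S\ref{ss:hovi}: apply the shift first and use the genuinely functorial split sequence $1 \to \bE \to \Sigma(\bE) \to \ul{A} \to 1$ (the new coordinate sits at $\infty$, which every shifted morphism preserves), at the cost of changing the coefficients from $M$ to $\Sigma(M)$ --- which is where your $\ol{\Sigma}(M)$ corrections and the long exact sequence in coefficients really enter, and where one must also handle the possible non-injectivity of $M \to \Sigma(M)$. Your part (b) has the further slip that $A^n_C \cap A^{[n-1]}$ equals $A^{n-1}_C$, not $A^{n-1}$, and the last-coordinate map $A^n_C \to A$ is not surjective for $n=1$, so the claim that the induction of part (a) applies ``verbatim'' is not justified as stated. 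All of this is repairable, but it amounts to redoing \S\ref{s:hovi} in a special case where the explicit tensor resolution makes it unnecessary.
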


\begin{proof}
Pick a free resolution $\bF_\bullet$ of the $\bk[A]$-module $\bk$ such that each $\bF_i$
is a finitely generated $\bk[A]$-module and such that $\bF_0 = \bk[A]$.
For each $n \geq 0$, the complex $(\bF^{\otimes n})_{\bullet}$ is a free resolution of the $\bk[A^n]$-module $\bk$.

For each $i \geq 0$, we assemble the $i^{\text{th}}$ terms of $(\bF^{\otimes n})_{\bullet}$ into an $\OI$-module
$X(i)$ as follows.  First, define
\[
X(i)_n = (\bF^{\otimes n})_i = \bigoplus_{i_1 + \cdots + i_n = i} \bF_{i_1} \otimes \cdots \otimes \bF_{i_n}.
\]
Next, given an $\OI$-morphism $f\colon [n] \rightarrow [m]$, define 
$f_{\ast}\colon X(i)_n \rightarrow X(i)_m$ in the following way.  Consider
a summand $\bF_{i_1} \otimes \cdots \otimes \bF_{i_n}$ of $X(i)_n$.  For
$1 \leq a' \leq m$, define
\[i_{a'}' = \begin{cases}
i_{a} & \text{if $a' = f(a)$ for some $a \in [n]$},\\
0 & \text{otherwise}.\end{cases}\]
We thus obtain a summand $\bF_{i_1'} \otimes \cdots \otimes \bF_{i_m'}$ of
$X(i)_m$.  Define $f_{\ast}\colon X(i)_n \rightarrow X(i)_m$ to be the map that
takes $\bF_{i_1} \otimes \cdots \otimes \bF_{i_n}$ to 
$\bF_{i_1'} \otimes \cdots \otimes \bF_{i_m'}$ by inserting terms that equal
$1 \in \bk[A] = \bF_0$ into the needed places.

For each $i \geq 0$, define $Y(i)$ to be the $\OI$-module $[n] \mapsto (X(i)_n \otimes M_n)_{A^n}$, where
the subscript indicates that we are taking the $A^n$-coinvariants.
The $Y(i)$ form a complex
\[\cdots \longrightarrow Y(3) \longrightarrow Y(2) \longrightarrow Y(1) \longrightarrow Y(0) \longrightarrow 0\]
of $\OI$-modules, and the $\OI$-module $\bH_i(\bE, M)$ is the $i^{\text{th}}$ homology group of this complex.
By the local noetherianity of $\OI$ (Corollary \ref{cor:OIgrob}), to prove that
$\bH_i(\bE,M)$ is a finitely generated $\OI$-module for all $i \geq 0$, 
it is enough to prove that each $Y(i)$ is a finitely generated $\OI$-module, which we now
do.

For each $i \geq 0$, the $\OI$-module $X(i)$ is generated in finite degree (in fact, only terms of degree at most $i$
are needed).  Since $M$ is finitely generated as an $\OI$-module, it is in particular generated in finite degree,
so by Corollary \ref{cor:OIF} the $\OI$-module $X(i) \otimes M$ is also generated in finite degree.  This
implies that $Y(i)$ is also generated in finite degree.
Since $\bF_i$ is a finitely generated $\bk[A]$-module for each $i \geq 0$ and $M_n$ is a $\bk[A^n]$-module that
is finitely generated as a $\bk$-module for each $n \geq 0$, 
it follows that the $\bk$-module
$Y(i)_n = ((\bF^{\otimes n})_i \otimes M_n)_{A^n}$ is a finitely generated $\bk$-module for
all $i,n \geq 0$.
Combining this with the fact that each $Y(i)$ is generated in finite degree, we deduce that 
the $\OI$-module $Y(i)$ is finitely generated for all $i \geq 0$, as desired.

For the second statement, the restriction of $\bF^{\otimes n}$ to $A^n_C$ is still finitely generated since $A^n_C$ is a finite index subgroup in $A^n$, and we can proceed as before.
\end{proof}

\begin{proposition} \label{prop:oigphom2}
Let $A$ be a group of type $\rF\rP$ over $\bk$ and let $\bE'$ be the $\OI(d)$-group given by $\bE'_{n,\lambda}=A^n$. Then $\bH_i(\bE', \ul{\bk})$ is a finitely generated $\OI(d)$-module for
all $i \geq 0$.
\end{proposition}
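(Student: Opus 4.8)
The plan is to re-run the proof of Proposition~\ref{prop:oigphom}(a) with $\OI$ replaced by $\OI(d)$ and with trivial coefficients $\ul{\bk}$. First I would choose a free resolution $\bF_\bullet \to \bk$ of $\bk[A]$-modules with each $\bF_i$ a finitely generated free $\bk[A]$-module and $\bF_0 = \bk[A]$, which exists because $A$ is of type $\rF\rP$ over $\bk$. Then for each $n$ the complex $(\bF^{\otimes n})_\bullet$ is a free resolution of $\bk$ over $\bk[A^n]$, so $\bH_i(\bE',\ul{\bk})_{n,\lambda} = \rH_i(A^n,\bk)$ is computed by the complex obtained from $(\bF^{\otimes n})_\bullet$ by taking $A^n$-coinvariants.

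Next I would organize this into a complex of $\OI(d)$-modules. For each $i \geq 0$ set $X(i)_{n,\lambda} = (\bF^{\otimes n})_i = \bigoplus_{i_1 + \cdots + i_n = i} \bF_{i_1} \otimes \cdots \otimes \bF_{i_n}$. The point is that a morphism $([n],\lambda) \to ([m],\mu)$ in $\OI(d)$ is in particular an order-preserving injection $[n] \hookrightarrow [m]$, so the same recipe as in the proof of Proposition~\ref{prop:oigphom} --- insert the element $1 \in \bk[A] = \bF_0$ into the slots not in the image --- defines transition maps making $X(i)$ an $\OI(d)$-module; the marked tuples $\lambda,\mu$ just come along and impose no extra condition. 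By construction $X(i)$ is generated in finite degree (only degrees $\le i$ appear). Letting $Y(i)$ be the $\OI(d)$-module $([n],\lambda) \mapsto (X(i)_{n,\lambda})_{A^n}$, the differentials of $\bF^{\otimes n}$ make the $Y(i)$ into a complex of $\OI(d)$-modules whose $i$th homology is $\bH_i(\bE',\ul{\bk})$.

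Finally I would prove each $Y(i)$ is a finitely generated $\OI(d)$-module, which finishes the proof: $\bH_i(\bE',\ul{\bk})$ is a subquotient of $Y(i)$, and $\OI(d)$ is locally noetherian by Corollary~\ref{cor:OIgrob}, so it is finitely generated. To see $Y(i)$ is finitely generated: it is a quotient of $X(i)$, hence generated in finite degree; and each $Y(i)_{n,\lambda} = ((\bF^{\otimes n})_i)_{A^n}$ is a finitely generated $\bk$-module because each $\bF_j$ is a finitely generated $\bk[A]$-module. A module generated in finite degrees whose values are all finitely generated $\bk$-modules is finitely generated, so we are done.

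I do not anticipate a real obstacle; everything needed about $\OI(d)$ --- that it is locally noetherian and satisfies Property~(F) (Corollaries~\ref{cor:OIgrob} and~\ref{cor:OIF}) --- is already available, and the only mild subtlety is checking that the ``insert $\bF_0$'s'' bookkeeping is functorial for $\OI(d)$-morphisms, which is immediate since these are just injections of ordered sets preserving the marked tuples. One could alternatively try to deduce the statement from Proposition~\ref{prop:oigphom}(a) via the equivalence $\OI(d) \cong \OI^{d+1}$ of Proposition~\ref{proposition:oidoi}, but under that equivalence $\bE'$ acquires an extra constant $A^d$ factor, so re-running the argument directly is cleaner.
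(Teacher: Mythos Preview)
Your argument is correct, but the paper takes a much shorter route. The key observation you missed is that $\bE'$ is literally the pullback of the $\OI$-group $\bE$ along the forgetful functor $\Phi\colon \OI(d)\to\OI$, so $\bH_i(\bE',\ul{\bk})=\Phi^*\bigl(\bH_i(\bE,\ul{\bk})\bigr)$ as $\OI(d)$-modules. Since $\bH_i(\bE,\ul{\bk})$ is a finitely generated $\OI$-module by Proposition~\ref{prop:oigphom}(a), and $\Phi$ satisfies Property~(F) (which follows easily from the equivalence $\OI(d)\cong\OI^{d+1}$), the pullback is finitely generated by Proposition~\ref{prop:fpullback}. That's the whole proof.

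Your approach of re-running the explicit resolution argument inside $\OI(d)$ works fine---the bookkeeping you describe is correct, and $X(i)$ is indeed generated in finite degree (now by objects $(n,\lambda)$ with $n\le i+d$, since the image of any $\OI(d)$-morphism must contain the $d$ marked points in addition to the at most $i$ positions carrying nonzero $\bF$-degree). But this duplicates the work already done in Proposition~\ref{prop:oigphom}. You considered deducing the result via the equivalence $\OI(d)\cong\OI^{d+1}$ and rejected it because of the stray $A^d$ factor; the cleaner move is to use $\Phi$ itself rather than that equivalence, which avoids the issue entirely.
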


\begin{proof}
The $\OI(d)$-group $\bE'$ is the pullback of the $\OI$-group $\bE$ from Proposition~\ref{prop:oigphom} through the forgetful functor $\Phi\colon \OI(d) \to \OI$. Thus $\bH_i(\bE', \ul{\bk})$ is the pullback to $\OI(d)$ of the $\OI$-module $\bH_i(\bE, \ul{\bk})$, which is finitely generated by that proposition. The result now follows from the fact that $\Phi$ satisfies Property~(F), which follows easily from Proposition \ref{proposition:oidoi}.
\end{proof}

\subsection{A filtration} \label{ss:hovi}

Our goal in this section is to prove the following result. Recall that $\ol{\Sigma}$ is the reduced shift functor on $\OI$-modules, i.e.,\ the cokernel of the canonical map $M \to \Sigma(M)$. Also, $P_d$ is the principal projective $\OVI$-module associated to the object $R^d$ of $\OVI$.

\begin{proposition} \label{prop:hovi}
The $\OI$-module $\ol{\Sigma}(\bH_i(\bU, P_d))$ has a filtration where the graded pieces are subquotients of $\OI$-modules of the form $\bH_i(\bU, P_e)$ with $e<d$ or $\bH_j(\bU, M)$ with $j<i$ and $M$ a finitely generated $\OVI$-module.
\end{proposition}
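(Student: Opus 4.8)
The plan is to leverage Corollary~\ref{cor:PU}, which identifies $\bH_i(\bU,P_d)$ with $\Phi_!(\bH_i(\bU_d,\ul\bk))$ for the forgetful functor $\Phi\colon\OI(d)\to\OI$, together with Proposition~\ref{prop:kanshift}, which describes how $\Phi_!$ interacts with the shift functors. Combining these yields a natural decomposition
\[
\ol\Sigma\bigl(\bH_i(\bU,P_d)\bigr)\;\cong\;\Phi_!\bigl(\ol\Sigma\,\bH_i(\bU_d,\ul\bk)\bigr)\;\oplus\;\Phi_!\bigl(\Delta\,\bH_i(\bU_d,\ul\bk)\bigr).
\]
Since $\Phi_!$ is exact, it suffices to produce, for each summand separately, a filtration whose graded pieces are of the asserted form, and then concatenate.

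For the first summand, I would begin by observing that the shifted $\OI(d)$-group $\Sigma\bU_d$, with $(\Sigma\bU_d)_{n,\lambda}=U_{n+1,\lambda}$, fits into a split short exact sequence $1\to\bV_d\to\Sigma\bU_d\to\bU_d\to 1$ of $\OI(d)$-groups, in which the surjection $\Sigma\bU_d\to\bU_d$ restricts a unitriangular matrix to its upper-left $n\times n$ block, the splitting $\bU_d\to\Sigma\bU_d$ extends a matrix by the identity on the new basis vector, and the kernel $\bV_d$ is the $\OI(d)$-group $[n,\lambda]\mapsto R^n$ consisting of the matrices supported on the last column (so $\bV_d$ is the pullback along $\Phi$ of the $\OI$-group $[n]\mapsto R^n$). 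Since the splitting is exactly the canonical map $\bU_d\to\Sigma\bU_d$, one has $\bH_i(\Sigma\bU_d,\ul\bk)=\Sigma\,\bH_i(\bU_d,\ul\bk)$, and $\ol\Sigma\,\bH_i(\bU_d,\ul\bk)$ is precisely the complementary summand produced by Proposition~\ref{prop:hochserre}(3) applied with $\bG=\Sigma\bU_d$, $\bE=\bU_d$, $\bK=\bV_d$; hence it carries an $\OI(d)$-filtration whose graded pieces are subquotients of $\bH_p(\bU_d,\bH_q(\bV_d,\ul\bk))$ with $p+q=i$ and $p<i$. The key point is then that $\bH_q(\bV_d,\ul\bk)$ is the restriction to $\bU_d$ of an $\OVI(d)$-module, namely the functor $\wt N_q$ sending $(R^n,\lambda)$ to $\rH_q(R^n,\bk)$ and a morphism $f$ to $\rH_q(f,\bk)$; moreover $\wt N_q$ is finitely generated, because its underlying $\OI(d)$-module $\bH_q(\bV_d,\ul\bk)$ is finitely generated by Proposition~\ref{prop:oigphom2} (the additive group of $R$, being finitely generated abelian, is of type $\rF\rP$ over $\bk$), hence is generated in finite degrees, whence so is $\wt N_q$ as an $\OVI(d)$-module, and each $\rH_q(R^n,\bk)$ is a finitely generated $\bk$-module. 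Shapiro's lemma together with the descriptions of $\Phi_!$ and $\Psi_!$ in Propositions~\ref{prop:OIkan} and~\ref{prop:OVIkan} gives $\Phi_!\bigl(\bH_p(\bU_d,\wt N_q)\bigr)\cong\bH_p(\bU,\Psi_!\wt N_q)$ with $\Psi_!\wt N_q$ a finitely generated $\OVI$-module, so applying the exact functor $\Phi_!$ to the filtration above exhibits $\Phi_!(\ol\Sigma\,\bH_i(\bU_d,\ul\bk))$ as filtered by subquotients of $\bH_p(\bU,\Psi_!\wt N_q)$ with $p<i$, which are of the allowed form.

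For the second summand I would unwind the shift and truncation functors to see that $\Delta_0^*\bU_d=\Sigma_0^*\bU_{d-1}$ as $\OI(d-1)$-groups — both send $([n],\mu)$ to $U_{n+1,\mu}$, the extra marked coordinate being automatically fixed — and hence $\Delta\,\bH_i(\bU_d,\ul\bk)=\Sigma\,\bH_i(\bU_{d-1},\ul\bk)$ as $\OI(d-1)$-modules. Proposition~\ref{prop:kanshift} applied to $\Phi\colon\OI(d-1)\to\OI$, together with Corollary~\ref{cor:PU} in degree $d-1$, then shows that $\Phi_!(\Delta\,\bH_i(\bU_d,\ul\bk))=\Phi_!(\Sigma\,\bH_i(\bU_{d-1},\ul\bk))$ is a direct summand of $\Sigma\bigl(\bH_i(\bU,P_{d-1})\bigr)$. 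The latter $\OI$-module is an extension of $\ol\Sigma(\bH_i(\bU,P_{d-1}))$ by a quotient of $\bH_i(\bU,P_{d-1})$; the quotient is of the allowed form since $d-1<d$, and $\ol\Sigma(\bH_i(\bU,P_{d-1}))$ is filtered by allowed pieces by induction on $d$ (the base case $d=0$ has no $\Delta$-summand and is handled by the previous paragraph with $\Phi=\id$). Since a direct summand of a filtered module is again filtered by subquotients of the same graded pieces, $\Phi_!(\Delta\,\bH_i(\bU_d,\ul\bk))$ is filtered as required, and splicing the two filtrations completes the argument.

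The step I expect to be the main obstacle is the analysis of the first summand: one must check carefully that the splitting of $\Sigma\bU_d$ is natural over $\OI(d)$ and really coincides with the canonical map $\bU_d\to\Sigma\bU_d$, so that Proposition~\ref{prop:hochserre}(3) genuinely computes $\ol\Sigma\,\bH_i(\bU_d,\ul\bk)$, and — the true crux — that the auxiliary coefficient systems $\bH_q(\bV_d,\ul\bk)$ lift to finitely generated $\OVI(d)$-modules, since it is precisely this lifting that allows the Hochschild--Serre graded pieces to be rewritten in the form $\bH_p(\bU,M)$ with $M$ a finitely generated $\OVI$-module, as the statement demands. Keeping straight which shift or truncation functor lives over $\OI(d)$ versus $\OI(d-1)$ is a further source of potential error.
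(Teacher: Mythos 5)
Your decomposition via Corollary~\ref{cor:PU} and Proposition~\ref{prop:kanshift}, and your entire treatment of the first summand $\Phi_!(\ol{\Sigma}\,\bH_i(\bU_d,\ul{\bk}))$, coincide with the paper's proof (its Lemmas~\ref{lem:Ufilt} and~\ref{lem:shapiro}): the semi-direct product $U_{n+1,\lambda}=U_{n,\lambda}\ltimes R^n$, Proposition~\ref{prop:hochserre}, the lift of $\bH_q(\bV_d,\ul{\bk})$ to a finitely generated $\OVI(d)$-module via Proposition~\ref{prop:oigphom2}, and Shapiro's lemma are exactly the intended ingredients, correctly assembled.

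Your treatment of the second summand, however, rests on a false identification. You claim $\Delta_0^*\bU_d=\Sigma_0^*\bU_{d-1}$, ``the extra marked coordinate being automatically fixed.'' This is backwards: for an upper unitriangular matrix it is the \emph{first} basis vector that is automatically fixed, not the last. At $([n],\mu)$ one has $(\Sigma_0^*\bU_{d-1})_{n,\mu}=U_{n+1,\mu}$, whereas $(\Delta_0^*\bU_d)_{n,\mu}=U_{[n]\amalg\{\infty\},\,\mu\amalg\{\infty\}}$ is the strictly smaller subgroup of $U_{n+1,\mu}$ that additionally fixes $e_{n+1}$; for instance the elementary matrix $E^r_{1,n+1}$ lies in the former but not the latter. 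Consequently $\Delta\,\bH_i(\bU_d,\ul{\bk})$ is not $\Sigma\,\bH_i(\bU_{d-1},\ul{\bk})$, and the ensuing argument (realizing the summand inside $\Sigma(\bH_i(\bU,P_{d-1}))$ and inducting on $d$) is built on this error. The correct statement --- the paper's Lemma~\ref{lem:Delta} --- is that forcing $e_{n+1}$ to be fixed cuts $U_{[n]\amalg\{\infty\},\,\mu\amalg\{\infty\}}$ down to $U_{n,\mu}$, so that $\Delta\,\bH_i(\bU_d,\ul{\bk})\cong\bH_i(\bU_{d-1},\ul{\bk})$ with no shift at all, whence $\Phi_!(\Delta\,\bH_i(\bU_d,\ul{\bk}))=\bH_i(\bU,P_{d-1})$ on the nose by Corollary~\ref{cor:PU}. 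This is already a graded piece of the allowed form $\bH_i(\bU,P_e)$ with $e<d$, so the correct identification both repairs and simplifies your argument: no extension of shifts and no induction on $d$ is needed.
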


We begin with a number of lemmas. Recall that $\Phi \colon \OI(d) \to \OI$ and $\Psi \colon \OVI(d) \to \OVI$ are the forgetful functors.  Also,
$\bU_d$ is the $\OI(d)$-group $(\bU_d)_{n,\lambda}=U_{n,\lambda}$, where $U_{n,\lambda}$ is the group discussed in \S \ref{section:ovidefinitions}.
Finally, the subscript $!$ is used to denote the left Kan extension discussed in \S \ref{ss:kan}.

\begin{lemma} \label{lem:shapiro}
Let $M$ be an $\OVI(d)$-module. We have an isomorphism of $\OI$-modules $\Phi_!(\bH_i(\bU_d, M)) \cong \bH_i(\bU, \Psi_!(M))$.
\end{lemma}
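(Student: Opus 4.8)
I will prove the statement for all $i$ at once, by realizing the isomorphism at the level of chain complexes. Pointwise the two sides already agree: by Proposition~\ref{prop:OVIkan} we have $\Psi_!(M)_n=\bigoplus_\lambda \Ind_{U_{n,\lambda}}^{U_n}M_{n,\lambda}$, so since group homology commutes with (finite) direct sums, Shapiro's lemma gives $\rH_i(U_n,\Psi_!(M)_n)\cong\bigoplus_\lambda \rH_i(U_{n,\lambda},M_{n,\lambda})$, and the latter is $\Phi_!(\bH_i(\bU_d,M))_n$ by Proposition~\ref{prop:OIkan}. The real content of the lemma is therefore the naturality of this identification in $[n]$, and the plan is to obtain it by making Shapiro's lemma functorial via the bar resolution.

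First I would fix, functorially in the group $G$, the normalized bar resolution $B_\bullet(G)\to\bk$ of $\bk$ over $\bk[G]$. For a subgroup $H\le G$ the restriction $B_\bullet(G)|_H$ is again a free $\bk[H]$-resolution of $\bk$ (as $\bk[G]$ is free over $\bk[H]$), and the inclusion $H\hookrightarrow G$ induces an augmentation-preserving chain map $\theta_{H,G}\colon B_\bullet(H)\to B_\bullet(G)|_H$ of $\bk[H]$-module complexes, which is a homotopy equivalence by the comparison theorem; all of these data are natural in the pair $(H,G)$. Since the $\OI(d)$-group maps $U_{n,\lambda}\to U_{m,\mu}$ of $\bU_d$ are the restrictions of the maps $U_n\to U_m$, the assignments $(n,\lambda)\mapsto B_p(U_{n,\lambda})\otimes_{\bk[U_{n,\lambda}]}M_{n,\lambda}$ and $(n,\lambda)\mapsto B_p(U_n)\otimes_{\bk[U_{n,\lambda}]}M_{n,\lambda}$ define complexes $C_\bullet$ and $D_\bullet$ of $\OI(d)$-modules, and the maps $\theta_{U_{n,\lambda},U_n}\otimes\id_{M_{n,\lambda}}$ assemble, by naturality of $\theta$ and of $B_\bullet$, into a morphism of complexes $C_\bullet\to D_\bullet$ that is a homotopy equivalence of complexes of $\bk$-modules at each object $(n,\lambda)$, hence a quasi-isomorphism. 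Since the homology of $C_\bullet$, computed pointwise, is the $\OI(d)$-module $\bH_\bullet(\bU_d,M)$, we conclude $\bH_\bullet(\bU_d,M)\cong H_\bullet(D_\bullet)$.

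Finally I would apply $\Phi_!$. It is exact, so it commutes with homology, and $\Phi_!(X)_n=\bigoplus_\lambda X_{n,\lambda}$ by Proposition~\ref{prop:OIkan}; hence $\Phi_!(\bH_i(\bU_d,M))\cong H_i(\Phi_!D_\bullet)$, where $(\Phi_!D_p)_n=\bigoplus_\lambda B_p(U_n)\otimes_{\bk[U_{n,\lambda}]}M_{n,\lambda}$. Rewriting $\Ind_{U_{n,\lambda}}^{U_n}M_{n,\lambda}=\bk[U_n]\otimes_{\bk[U_{n,\lambda}]}M_{n,\lambda}$ in Proposition~\ref{prop:OVIkan} identifies $(\Phi_!D_\bullet)_n$ with $B_\bullet(U_n)\otimes_{\bk[U_n]}\Psi_!(M)_n$, which by the definition of $\bH$ for $\bG$-modules is the complex of $\OI$-modules whose homology is $\bH_\bullet(\bU,\Psi_!(M))$; one must check this identification is compatible with the $\OI$-structure maps, for which the point is that the structure map $f_*$ of $\Psi_!(M)$ carries the $\lambda$-summand into the $f(\lambda)$-summand, which is visible from the construction of $\Psi_!$ in the proof of Proposition~\ref{prop:OVIkan}. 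Taking $H_i$ then yields $\Phi_!(\bH_i(\bU_d,M))\cong\bH_i(\bU,\Psi_!(M))$. I expect the main obstacle to be precisely this last bookkeeping — verifying that the direct-sum decomposition of $\Psi_!(M)$ is $\OI$-equivariant with index behaving as $\lambda\mapsto f(\lambda)$, together with the compatibility of the inclusions $U_{n,\lambda}\hookrightarrow U_n$ with $\OI(d)$-morphisms — rather than any conceptually deep step, the conceptual input being just the functoriality of the bar resolution and the exactness of $\Phi_!$.
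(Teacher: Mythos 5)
Your proof is correct and follows the same route as the paper's: the paper likewise combines Proposition~\ref{prop:OVIkan}, Shapiro's lemma, and Proposition~\ref{prop:OIkan} to identify the two sides objectwise, and then simply asserts that ``a moment's reflection shows that they also agree on morphisms.'' Your chain-level argument with functorial bar resolutions (and the exactness of $\Phi_!$) is a legitimate way of carrying out that reflection, so you have if anything supplied more detail on the naturality than the paper does.
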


\begin{proof}
Recall from Proposition~\ref{prop:OVIkan} that
\begin{displaymath}
\Psi_!(M)_n = \bigoplus_{\lambda} \Ind_{U_{n,\lambda}}^{U_n}(M_{n,\lambda}).
\end{displaymath}
Thus, by Shapiro's lemma we have
\begin{displaymath}
\bH_i(\bU, \Psi_!(M))_n = \rH_i(U_n, \Psi_!(M)_n) = \bigoplus_{\lambda} \rH_i(U_{n,\lambda}, M_{n,\lambda}),
\end{displaymath}
and this is exactly $\Phi_!(\bH_i(\bU_d, M))$ by Proposition~\ref{prop:OIkan}.  This shows that $\Phi_!(\bH_i(\bU_d, M))$ and $\bH_i(\bU, \Psi_!(M))$ agree on objects, and a moment's reflection shows that they also agree on morphisms.
\end{proof}

\begin{corollary} \label{cor:PU}
We have $\bH_i(\bU, P_d)=\Phi_!(\bH_i(\bU_d, \ul{\bk}))$.
\end{corollary}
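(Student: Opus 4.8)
The plan is to reduce the statement at once to the $M = \ul{\bk}$ case of Lemma~\ref{lem:shapiro}. Applying that lemma to the constant $\OVI(d)$-module $\ul{\bk}$ produces a natural isomorphism of $\OI$-modules $\Phi_!(\bH_i(\bU_d, \ul{\bk})) \cong \bH_i(\bU, \Psi_!(\ul{\bk}))$, so everything comes down to identifying the left Kan extension $\Psi_!(\ul{\bk})$ with the principal projective $P_d$.

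To prove that $\Psi_!(\ul{\bk}) \cong P_d$, I would use the description of principal projectives as left Kan extensions from the point category recalled in \S\ref{ss:kan}, together with the compatibility of left Kan extension with composition of functors. The functor $i_{R^d}\colon \pt \to \OVI$ sending the object of $\pt$ to $R^d$ factors as $\pt \xrightarrow{\,j\,} \OVI(d) \xrightarrow{\,\Psi\,} \OVI$, where $j$ sends the object of $\pt$ to the object $(R^d, (1,\ldots,d))$ whose marking records the entire ordered basis. Since every basis vector of this object is marked, unwinding the definition of morphisms in $\OVI(d)$ from \S\ref{ss:defovi} shows that a morphism $(R^d,(1,\ldots,d)) \to (R^n,\mu)$ is forced to send the $i$th basis vector of $R^d$ to the $\mu_i$th basis vector of $R^n$ with no lower-order terms; hence there is exactly one such morphism for each target, and the principal projective $\OVI(d)$-module $P_{(R^d,(1,\ldots,d))} = j_!(\bk)$ is the constant module $\ul{\bk}$. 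As $(i_{R^d})_!(\bk) = P_d$ and $(\Psi \circ j)_! = \Psi_! \circ j_!$, this yields $P_d = \Psi_!(\ul{\bk})$, completing the reduction.

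I do not anticipate a genuine obstacle here: all of the real content sits in Lemma~\ref{lem:shapiro}, and the remaining identification $\Psi_!(\ul{\bk}) = P_d$ is essentially bookkeeping. If one prefers to avoid the abstract Kan-extension manipulation, the same identification can be checked degreewise: Proposition~\ref{prop:OVIkan} gives $\Psi_!(\ul{\bk})_n = \bigoplus_{\lambda} \Ind_{U_{n,\lambda}}^{U_n}(\bk) = \bigoplus_{\lambda} \bk[U_n/U_{n,\lambda}]$, while the orbit decomposition of the $U_n$-set $\Hom_{\OVI}(R^d, R^n)$ appearing in Substep~\ref{substep:1a} puts $(P_d)_n = \bk[\Hom_{\OVI}(R^d, R^n)]$ into the same shape; the only thing needing a moment's attention in that version is confirming that the $U_n$-stabilizer of the standard inclusion marked by $\lambda$ is precisely $U_{n,\lambda}$ and that this orbit exhausts the $\lambda$-summand, both of which are immediate from the definitions in \S\ref{ss:defovi}.
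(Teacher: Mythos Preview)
Your proof is correct and matches the paper's approach essentially verbatim: both apply Lemma~\ref{lem:shapiro} with $M=\ul{\bk}$ and reduce to the identification $\Psi_!(\ul{\bk})=P_d$, which is obtained from \eqref{eqn:kan-proj} together with the observation that $(R^d,(1,\ldots,d))$ is initial in $\OVI(d)$ (so its principal projective is $\ul{\bk}$). The paper states the initiality directly while you spell out the one-morphism check, but the argument is the same.
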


\begin{proof}
Let $x = (R^d, \{e_i\}, \lambda) \in \OVI(d)$ where $e_i$ is the standard basis and $\lambda=(1<2<\cdots<d)$.  Set $y = (R^d, \{e_i\}) \in \OVI$. Then $\Psi_!(P_x)=P_y$ by \eqref{eqn:kan-proj}. Since $x$ is the initial object of $\OVI(d)$, we have $P_x(y)=\bk[\Hom(x,y)]=\bk$ for all $y$, so $P_x=\ul{\bk}$. We thus have $\Psi_!(\ul{\bk})=P_d$.  Using the fact that
$P_y$ is just another name for $P_d$, the result now follows from Lemma~\ref{lem:shapiro} with $M=\ul{\bk}$. 
\end{proof}

Let $\bU'_d=\Sigma(\bU_d)$. This is the $\OI(d)$-group given by $(\bU'_d)_{n,\lambda}=U_{n+1,\lambda}$. The group $U_{n+1,\lambda}$ is the semi-direct product $U_{n,\lambda} \ltimes R^n$, and this description is functorial. More precisely, let $\bE_d$ be the $\OI(d)$-group given by $(\bE_d)_{n,\lambda}=R^n$. We then have homomorphisms of $\OI(d)$-groups $i \colon \bU_d \to \bU_d'$ and $p \colon \bU_d' \to \bU_d$ with $pi=\id$ and $\ker(p)=\bE_d$. We observe that $\bE_d$ is in fact naturally an $\OVI(d)$-group, and thus $\bH_i(\bE_d, \ul{\bk})$ is naturally an $\OVI(d)$-module.   Proposition~\ref{prop:oigphom2} says that $\bH_i(\bE_d, \ul{\bk})$ is finitely generated as an $\OI(d)$-module, so it is also finitely generated as an $\OVI(d)$-module.

\begin{lemma} \label{lem:Ufilt}
The $\OI(d)$-module $\ol{\Sigma}(\bH_r(\bU_d, \ul{\bk}))$ admits a filtration where the graded pieces are subquotients of $\bH_i(\bU_d, \bH_{r-i}(\bE_d, \ul{\bk}))$ with $0 \le i \le r-1$.
\end{lemma}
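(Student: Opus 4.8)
The plan is to apply Proposition~\ref{prop:hochserre} with $\cC = \OI(d)$ to the split extension of $\OI(d)$-groups constructed in the paragraph preceding the lemma. Concretely, take $\bG = \bU'_d = \Sigma(\bU_d)$, take $\bE = \bU_d$, and let $\pi = p \colon \bU'_d \to \bU_d$ and $\iota = i \colon \bU_d \to \bU'_d$ be the split surjection and its section, so that $\bK = \ker(p) = \bE_d$. Proposition~\ref{prop:hochserre}(2) then produces a direct sum decomposition $\bH_r(\bU'_d, \ul{\bk}) = \bH_r(\bU_d, \ul{\bk}) \oplus M$ of $\OI(d)$-modules in which the first summand is embedded via $i_*$ and split off via $p_*$, and Proposition~\ref{prop:hochserre}(3) equips $M$ with an $\OI(d)$-module filtration whose graded pieces are subquotients of $\bH_i(\bU_d, \bH_{r-i}(\bE_d, \ul{\bk}))$ for $0 \le i \le r-1$. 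That is exactly the desired conclusion, modulo identifying $M$ with $\ol{\Sigma}(\bH_r(\bU_d, \ul{\bk}))$.

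To make that identification I would proceed in two steps. First, since $\bU'_d = \Sigma_0^*(\bU_d)$ and $\Sigma_0^*(\ul{\bk}) = \ul{\bk}$, evaluating both sides at $(n,\lambda)$ yields $\rH_r(U_{n+1,\lambda}, \bk)$, so $\bH_r(\bU'_d, \ul{\bk}) = \Sigma(\bH_r(\bU_d, \ul{\bk}))$ as $\OI(d)$-modules once one checks that the structure maps agree. Second, under this identification the section $i_* \colon \bH_r(\bU_d, \ul{\bk}) \to \bH_r(\bU'_d, \ul{\bk})$ coincides with the canonical map $\beta \colon \bH_r(\bU_d, \ul{\bk}) \to \Sigma(\bH_r(\bU_d, \ul{\bk}))$: the map $\beta$ at $(n,\lambda)$ is induced on $\rH_r$ by the $\OI(d)$-morphism $([n],\lambda) \to ([n+1],\lambda)$, which on automorphism groups is the standard inclusion $U_{n,\lambda} \hookrightarrow U_{n+1,\lambda}$, and this is precisely the component of $i$ at $(n,\lambda)$. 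Since $\beta = i_*$ is a split injection, its cokernel --- which is $\ol{\Sigma}(\bH_r(\bU_d, \ul{\bk}))$ by the definition of the reduced shift --- is canonically isomorphic to the complementary summand $M$, completing the proof.

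The substantive content is entirely packaged in Proposition~\ref{prop:hochserre}; the only thing requiring care is the bookkeeping in the second paragraph, namely checking that the objectwise isomorphism $\bH_r(\bU'_d, \ul{\bk}) \cong \Sigma(\bH_r(\bU_d, \ul{\bk}))$ is a morphism of $\OI(d)$-modules and that it intertwines $i_*$ with the canonical shift map $\beta$. Both follow from unwinding the definition of $\Sigma_0$ on morphisms of $\OI(d)$ together with the functoriality of group homology in the group, so I do not anticipate a real obstacle here.
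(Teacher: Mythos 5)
Your proof is correct and follows essentially the same route as the paper: apply Proposition~\ref{prop:hochserre} to the split extension $1 \to \bE_d \to \bU'_d \to \bU_d \to 1$, and identify the complementary summand $M$ with the cokernel of $\beta = i_*$, i.e.\ with $\ol{\Sigma}(\bH_r(\bU_d, \ul{\bk}))$; the bookkeeping you flag in your second paragraph is exactly the content of the paper's one-line observation that $\ol{\Sigma}(\bH_r(\bU_d,\ul{\bk}))$ is the cokernel of the map induced by $i$. (Your assignment of roles, $\bE = \bU_d$ and $\bK = \bE_d$, is the correct one; the paper's own proof states ``$\bK=\bU_d$ and $\bE=\bE_d$,'' which is evidently a typo.)
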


\begin{proof}
The module $\ol{\Sigma}(\bH_r(\bU_d, \ul{\bk}))$ is the cokernel of the map
\begin{displaymath}
\bH_r(\bU_d, \ul{\bk}) \to \bH_r(\bU'_d, \ul{\bk})
\end{displaymath}
induced by the homomorphism $i \colon \bU_d \to \bU'_d$. The result therefore follows from Proposition~\ref{prop:hochserre}, taking $\bG=\bU_d'$ and $\bK=\bU_d$ and $\bE=\bE_d$.
\end{proof}

Recall that if $M$ is an $\OI(d)$-module, then right before Proposition \ref{prop:kanshift} we defined an $\OI(d-1)$-module $\Delta(M)$.

\begin{lemma} \label{lem:Delta}
We have $\Phi_!(\Delta(\bH_i(\bU_d, \ul{\bk})))=\Phi_!(\bH_i(\bU_{d-1}, \ul{\bk}))$.
\end{lemma}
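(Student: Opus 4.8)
The plan is to prove the stronger statement that $\Delta(\bH_i(\bU_d,\ul{\bk}))$ and $\bH_i(\bU_{d-1},\ul{\bk})$ are already isomorphic as $\OI(d-1)$-modules; the lemma then follows at once by applying $\Phi_!$ to both sides. (We may assume $d\ge 1$, as $\Delta$ is zero by convention otherwise.) First I would unwind the pullback. Since $\Delta_0$ sends $([n],\lambda)$ with $\lambda=(\lambda_1<\cdots<\lambda_{d-1})$ to $([n+1],\lambda')$ with $\lambda'=(\lambda_1<\cdots<\lambda_{d-1}<n+1)$ (identifying $[n]\amalg\{\infty\}$ with $[n+1]$ via $\infty\mapsto n+1$), the definition of $\Delta=\Delta_0^*$ gives
\[
\Delta(\bH_i(\bU_d,\ul{\bk}))_{n,\lambda}=\bH_i(\bU_d,\ul{\bk})_{n+1,\lambda'}=\rH_i(U_{n+1,\lambda'},\bk).
\]

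The key point is the canonical identification $U_{n+1,\lambda'}=U_{n,\lambda}$. Indeed, an element of $U_{n+1}$ that fixes the top basis vector $e_{n+1}$ — as every element of $U_{n+1,\lambda'}$ must, since $n+1$ is one of the marked indices of $\lambda'$ — has trivial last column, hence preserves $\langle e_1,\dots,e_n\rangle$ and restricts there to an upper unitriangular $n\times n$ matrix; restriction is a group isomorphism $U_{n+1,\lambda'}\xrightarrow{\ \sim\ }U_{n,\lambda}$, because the surviving conditions (fixing $e_{\lambda_1},\dots,e_{\lambda_{d-1}}$) match up on the two sides. I would then check that this isomorphism is natural in $\OI(d-1)$. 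Given an $\OI(d-1)$-morphism $f\colon([n],\lambda)\to([m],\mu)$, the morphism $\Delta_0(f)\colon([n+1],\lambda')\to([m+1],\mu')$ is $k\mapsto f(k)$ on $[n]$ and $n+1\mapsto m+1$, where $\mu'=(\mu_1<\cdots<\mu_{d-1}<m+1)$; unwinding the functoriality of the $\OI(d)$-group $\bU_d$ (transport an automorphism of a free summand, extend by the identity on the complementary basis vectors) shows that the induced homomorphism $U_{n+1,\lambda'}\to U_{m+1,\mu'}$ becomes exactly $(\bU_{d-1})_f\colon U_{n,\lambda}\to U_{m,\mu}$ under the identifications above. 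Since $\ul{\bk}$ pulls back to $\ul{\bk}$, this yields the desired isomorphism $\Delta(\bH_i(\bU_d,\ul{\bk}))\cong\bH_i(\bU_{d-1},\ul{\bk})$ of $\OI(d-1)$-modules, and applying $\Phi_!$ finishes the proof.

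I do not expect a genuine obstacle here: the only slightly delicate point is checking that the group isomorphism $U_{n+1,\lambda'}\cong U_{n,\lambda}$ intertwines the transition maps of the two $\OI$-group-homology modules, and that reduces to spelling out how $\bU_d$ is functorial. As an alternative phrasing of the conclusion, one could instead invoke Corollary~\ref{cor:PU} to rewrite $\Phi_!(\bH_i(\bU_{d-1},\ul{\bk}))=\bH_i(\bU,P_{d-1})$, but the module-level isomorphism above seems to be the cleanest route.
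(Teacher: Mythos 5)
Your proof is correct and follows essentially the same route as the paper's: the key point in both is that, because $\infty$ is the maximal element of $[n]\amalg\{\infty\}$, one has a natural group isomorphism $U_{[n]\amalg\{\infty\},\,\lambda\amalg\{\infty\}}\cong U_{n,\lambda}$. You package this as an isomorphism $\Delta(\bH_i(\bU_d,\ul{\bk}))\cong\bH_i(\bU_{d-1},\ul{\bk})$ of $\OI(d-1)$-modules before applying $\Phi_!$ (and are more explicit about naturality), whereas the paper applies $\Phi_!$ first via the direct-sum formula of Proposition~\ref{prop:OIkan}; the substance is the same.
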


\begin{proof}
By definition,
\begin{displaymath}
\Delta(\bH_i(\bU_d, \ul{\bk}))_{n,\lambda}
=\bH_i(\bU_d, \ul{\bk})_{[n] \amalg \{\infty\}, \lambda \amalg \{\infty\}}
=\rH_i(U_{[n] \amalg \{\infty\}, \lambda \amalg \{\infty\}}, \ul{\bk}).
\end{displaymath}
Since $\{\infty\}$ is the maximal element of $[n] \amalg \{\infty\}$, we have
\begin{displaymath}
U_{[n] \amalg \{\infty\}, \lambda \amalg \{\infty\}} \cong U_{n,\lambda}.
\end{displaymath}
Thus by Proposition \ref{prop:OIkan} we have
\begin{displaymath}
\Phi_!(\Delta(\bH_i(\bU_d, \ul{\bk}))) = \bigoplus_{\lambda} \rH_i(U_{n,\lambda}, \ul{\bk}),
\end{displaymath}
the sum taken over appropriate $d-1$ tuples $\lambda$.  Again using Proposition \ref{prop:OIkan}, this is exactly $\Phi_!(\bH_i(\bU_{d-1}, \ul{\bk}))$.
\end{proof}

\begin{proof}[Proof of Proposition~\ref{prop:hovi}]
We have $\bH_r(\bU, P_d)=\Phi_!(\bH_r(\bU_d, \ul{\bk}))$ by Corollary~\ref{cor:PU}. Thus by Proposition~\ref{prop:kanshift}, we have
\begin{displaymath}
\ol{\Sigma}(\bH_r(\bU, P_d))= \ol{\Sigma}(\Phi_!(\bH_r(\bU_d, \ul{\bk}))) = \Phi_!(\ol{\Sigma}(\bH_r(\bU_d, \ul{\bk}))) \oplus \Phi_!(\Delta(\bH_r(\bU_d, \ul{\bk}))).
\end{displaymath}
By Lemma~\ref{lem:Delta}, the second term on the right is $\Phi_!(\bH_r(\bU_{d-1}, \ul{\bk}))$.  By Corollary~\ref{cor:PU}, this equals $\bH_r(\bU, P_{d-1})$. By Lemma~\ref{lem:Ufilt}, the first term admits a filtration where the graded pieces are subquotients of $\Phi_!(\bH_i(\bU_d, \bH_{r-i}(\bE_d, \ul{\bk})))$ with $0 \le i \le r-1$. Setting $N_i=\bH_{r-i}(\bE_d, \ul{\bk})$, Proposition~\ref{prop:oigphom2} implies that $N_i$ is a finitely generated $\OVI(d)$-module.  Set $M_i=\Psi_!(N_i)$, so $M_i$ is a finitely generated $\OVI$-module. By Lemma~\ref{lem:shapiro}, we have
\begin{displaymath}
\Phi_!(\bH_i(\bU_d, N_i))=\bH_i(\bU, M_i).
\end{displaymath}
Combining all of the above, $\ol{\Sigma}(\bH_r(\bU, P_d))$ admits a filtration where one graded piece is $\bH_r(\bU, P_{d-1})$ and the other graded pieces are subquotients of $\bH_i(\bU, M_i)$ for $0 \le i \le r-1$. The result follows.
\end{proof}

\subsection{Proof of Theorem~\ref{mainthm2}}
\label{section:mainthm2proof}

We now prove Theorem~\ref{mainthm2}. Recall the statement: if $R$ is a ring whose additive group is a finitely generated abelian group, $\bk$ is a commutative noetherian ring, and $M$ is a finitely generated $\OVI$-module,
then $\bH_i(\bU,M)$ is a finitely generated $\OI$-module for all $i \geq 0$. Fix such $\bk$ and $R$ for the rest of this section. Consider the following statement:
\begin{itemize}
\item[$(S_i)$] For a finitely generated $\OVI$-module $M$, the $\OI$-module $\bH_i(\bU,M)$ is finitely generated.
\end{itemize}
Let $i$ be given and suppose that $(S_j)$ is true for all $j<i$ (a vacuous condition
if $i=0$). We will prove $(S_i)$, and this will establish the theorem.

We first show by induction on $d$ that $\bH_i(\bU,P_d)$ is a finitely generated $\OI$-module
for all $d$.
Suppose therefore that $\bH_i(\bU,P_e)$ is a finitely generated $\OI$-module for $e<d$ (a vacuous condition
for $d=0$), and let us prove that $\bH_i(\bU,P_d)$ is a finitely generated $\OI$-module. By Proposition~\ref{prop:hovi}, the $\OI$-module $\ol{\Sigma}(\bH_i(\bU,P_d))$ has a filtration where each graded piece is a subquotient of an $\OI$-module of the form $\bH_i(\bU,P_e)$ with $e<d$ or $\bH_j(\bU,M)$ with $j<i$ and $M$ finitely generated. By the two inductive hypotheses in force, both of these kinds of $\OI$-modules are finitely generated.  Using the local noetherianity of $\OI$-modules (Corollary~\ref{cor:OIgrob}), it follows that $\ol{\Sigma}(\bH_i(\bU,P_d))$ is a finitely generated $\OI$-module.
By Proposition~\ref{prop:redshift}, this implies that the $\OI$-module $\bH_i(\bU,P_d)$ is finitely generated, as desired.  

Let $M$ be a finitely generated $\OVI$-module. Consider an exact sequence
\begin{displaymath}
0 \to K \to P \to M \to 0
\end{displaymath}
where $P$ is a finite direct sum of principal projective $\OVI$-modules.  Since the category of $\OVI$-modules is locally noetherian (Theorem~\ref{mainthm3}), the $\OVI$-module $K$ is finitely generated.  We obtain an exact sequence
\begin{displaymath}
\bH_i(\bU,P) \to \bH_i(\bU,M) \to \bH_{i-1}(\bU,K).
\end{displaymath}
By the previous paragraph, the $\OI$-module $\bH_i(\bU,P)$ is finitely generated. By our inductive hypothesis $(S_{i-1})$, the $\OI$-module $\bH_{i-1}(\bU,K)$ is finitely generated.  Using
the local noetherianity of $\OI$ (Corollary \ref{cor:OIgrob}), it follows 
that the $\OI$-module $\bH_i(\bU,M)$ is finitely generated. We have thus established $(S_i)$,
and the proof is complete.

\begin{remark}
The dimension shifting step in the third paragraph above is the only place in the proof of the theorem where the noetherianity of $\OVI$ is used. We never need noetherianity of $\OVI(d)$.
\end{remark}

\begin{remark} \label{rmk:Liehom}
Suppose the additive group of $R$ is a finite rank free abelian group. We outline an alternative way to get finite generation of the $\OI$-module $[n] \mapsto \rH_i(U_n(R); \bk)$. Let $\fu_n(R)$ be the Lie algebra of strictly upper-triangular $n \times n$ matrices over $R$. By \cite[Theorem 4.3]{grunenfelder}, there is a spectral sequence beginning with the Lie algebra homology of $\fu_n(R)$ which converges to $\rH_i(U_n(R); \bk)$. The Lie algebra homology of $\fu_n(R)$ can be computed from the Koszul complex, whose terms are exterior powers of $\fu_n(R)$, and hence are finitely generated $\OI$-modules (this is similar to the $\OI$-structure on $\bF^{\otimes n}$ in the proof of Proposition~\ref{prop:oigphom}). By noetherianity, $\rH_i(U_n(R); \bk)$ is a finitely generated $\OI$-module.
\end{remark}

\subsection{A variant: relaxing unipotence}
\label{section:mainthm2variant}

For each $n$, we let $B_n(R)$ denote the group of upper-triangular invertible $n \times n$ matrices with entries in $R$. We denote the $\OI$-group $[n] \mapsto B_n(R)$ by $\bB$. Also, if $R$ is commutative and $C \subset R^\times$ is a subgroup, then
let $B_n^C(R) \subset B_n(R)$ be the subgroup whose determinant lies in $C$. We denote the $\OI$-subgroup $[n] \mapsto B_n^C(R)$ by $\bB^C$.

The goal of this section is to prove Theorem \ref{theorem:bvifg} below, which is an analogue of Theorem \ref{mainthm2} for $\bB^C$.  This requires the following lemma.

\begin{lemma} \label{lem:Rtimes-fg}
If $R$ is commutative and the additive group of $R$ is finitely generated, then the group of units $R^\times$ is also finitely generated.
\end{lemma}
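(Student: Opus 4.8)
The plan is to reduce the statement to the classical Dirichlet unit theorem by passing to the minimal primes of $R$. Since the additive group of $R$ is finitely generated, $R$ is module-finite over $\bZ$, hence a Noetherian ring; in particular it has finitely many minimal primes $\fp_1,\ldots,\fp_s$, their intersection is the nilradical $I = \sqrt{0}$, and $I$ is a nilpotent ideal. The canonical map gives an injection of rings $R \hookrightarrow \prod_{i=1}^s R/\fp_i$ with kernel $I$, and restricting to unit groups yields
\[ 1 \longrightarrow 1 + I \longrightarrow R^\times \longrightarrow \prod_{i=1}^s (R/\fp_i)^\times , \]
where the kernel is exactly $1+I$ because $1+I \subseteq R^\times$ (nilpotent ideals lie in the Jacobson radical). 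As $R$ is commutative, $R^\times$ is abelian, so it suffices to show that $1+I$ is finitely generated and that the image of $R^\times$ in $\prod_i (R/\fp_i)^\times$ is finitely generated. For the latter it is enough to show each $(R/\fp_i)^\times$ is finitely generated, since a subgroup of a finite product of finitely generated abelian groups is again finitely generated.

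For the kernel, I would fix $m$ with $I^m = 0$ and use the finite filtration $1+I = 1+I^1 \supseteq 1+I^2 \supseteq \cdots \supseteq 1+I^m = \{1\}$. For each $j \geq 1$ the assignment $1+x \mapsto x + I^{j+1}$ is a surjective group homomorphism $1+I^j \to I^j/I^{j+1}$ with kernel $1+I^{j+1}$, using that $I^j \cdot I^j \subseteq I^{j+1}$ kills the cross term in $(1+x)(1+y)$. Each successive quotient $I^j/I^{j+1}$ is a subquotient of the finitely generated abelian group $(R,+)$, hence finitely generated, so $1+I$ is an iterated extension of finitely many finitely generated abelian groups and is therefore finitely generated.

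For the factors, each $R/\fp_i$ is an integral domain that is finitely generated as a $\bZ$-module. If its characteristic is a prime $p$, then it is a finite $\bZ/p$-module, hence a finite domain, hence a finite field, so $(R/\fp_i)^\times$ is finite. If its characteristic is $0$, then it is a torsion-free (hence free) finitely generated $\bZ$-module whose field of fractions $K$ is a number field, and being a subring of $K$ whose elements are integral over $\bZ$ it is contained in the ring of integers $\cO_K$; therefore $(R/\fp_i)^\times$ is a subgroup of $\cO_K^\times$, which is finitely generated by Dirichlet's unit theorem, and hence is finitely generated itself. Combining the two parts gives the lemma. The only nontrivial input is the appeal to Dirichlet's unit theorem; the remaining ingredients are standard Noetherian ring theory and elementary filtration arguments, so I expect no genuine obstacle, only the routine verification of the exact sequence and the successive-quotient identifications above.
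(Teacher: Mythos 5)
Your proof is correct and takes essentially the same route as the paper: Dirichlet's unit theorem (plus the finite-field case) for the domain quotients, an embedding of the reduced quotient into a product over primes, and a finite filtration of the nilradical (equivalently, of $1+I$) whose graded pieces are subquotients of the finitely generated additive group of $R$. The only blemish is the phrase ``an injection of rings $R \hookrightarrow \prod_i R/\fp_i$ with kernel $I$,'' which should read that the canonical map $R \to \prod_i R/\fp_i$ has kernel $I$; your subsequent identification of the kernel of $R^\times \to \prod_i (R/\fp_i)^\times$ with $1+I$ is nevertheless correct.
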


\begin{proof}
If $R$ is a domain, then it is either a subring of the ring of integers of a number field, in which case the statement follows from the Dirichlet unit theorem, or it is a finite field, in which case there is nothing to prove.

If $R$ is reduced, then we have an injection $R \to \prod_P R/P$ where the product is over the finitely many associated primes of $R$. Thus we have an injection $R^\times \to \prod_P (R/P)^\times$, and hence $R^\times$ is finitely generated.

Finally, in general we have an exact sequence of groups
\[
0 \to \fN(R) \to R^\times \to (R/\fN(R))^\times \to 0,
\]
where $\fN(R)$ is the nilradical of $R$ equipped with the group structure $x*y = x + y + xy$, and the first map takes $x$ to $1+x$. (We note that the right map is surjective since any lift of a unit in $R/\fN(R)$ to $R$ is automatically a unit.) By the previous cases, the abelian group $(R/\fN(R))^\times$ is finitely generated. The fact that the additive group of $R$ is finitely generated implies that $R$ is noetherian, so
$\fN(R)^n=0$ for some $n$.  For each $k$, the $*$ operation on $\fN(R)$ descends to ordinary addition
on $\fN(R)^k/\fN(R)^{k+1}$.  Since the additive group $\fN(R)^k/\fN(R)^{k+1}$ is a subquotient of the finitely generated
additive group of $R$, the additive group $\fN(R)^k/\fN(R)^{k+1}$ is finitely generated.  Lifting additive generators for
$\fN(R)/\fN(R)^2, \fN(R)^2 / \fN(R)^3, \dots, \fN(R)^{n-1}/\fN(R)^n = \fN(R)^{n-1}$ to $\fN(R)$ gives generators for
$\fN(R)$ with respect to the operation $*$.  We conclude that $R^\times$ is a finitely generated group.
\end{proof}

\begin{theorem} \label{theorem:bvifg}
Suppose that $R$ is commutative and $C \subset R^\times$ is a subgroup. If $M$ is a $\bB$-module which is finitely generated as an $\OI$-module, then $\bH_i(\bB^C, M)$ is a finitely generated $\OI$-module for any $i \ge 0$.
\end{theorem}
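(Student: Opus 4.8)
The plan is to peel $\bB^C$ apart as (unipotent radical)~$\rtimes$~(torus) and reduce to Theorem~\ref{mainthm2} and Proposition~\ref{prop:oigphom}. Write $\bT^C$ for the ``determinant in $C$'' torus, the $\OI$-group $[n]\mapsto\{(t_1,\dots,t_n)\in(R^\times)^n:t_1\cdots t_n\in C\}$, so that $B_n^C(R)=U_n(R)\rtimes T_n^C(R)$ and there is a split extension of $\OI$-groups $1\to\bU\to\bB^C\to\bT^C\to1$. Its Lyndon--Hochschild--Serre spectral sequence is functorial in $[n]$, hence is a spectral sequence of $\OI$-modules
\[
\rE^2_{p,q}=\bH_p\bigl(\bT^C,\bH_q(\bU,M)\bigr)\Longrightarrow\bH_{p+q}(\bB^C,M).
\]
Since $\OI$ is locally noetherian (Corollary~\ref{cor:OIgrob}), it suffices to show that every $\rE^2_{p,q}$ is a finitely generated $\OI$-module; then $\bH_n(\bB^C,M)$, which is a finite iterated extension of subquotients of the $\rE^2_{p,q}$ with $p+q=n$, is finitely generated. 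So I need: \textbf{(i)} $\bH_q(\bU,M)$ is a finitely generated $\OI$-module carrying a compatible action of the full torus $\bT=[n]\mapsto(R^\times)^n$; and \textbf{(ii)} $\bH_p(\bT^C,N)$ is finitely generated whenever $N$ is a finitely generated $\OI$-module with compatible $\bT^C$-action.

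For \textbf{(i)}: restricting the $\bB$-action to $\bU$ makes $M$ a $\bU$-module, and this $\bU$-module underlies a unique $\OVI(R)$-module (every $\OVI(R)$-morphism factors as an automorphism composed with a morphism from $\OI$, and the $\bU$-module axioms force a well-defined, functorial extension of the action). This $\OVI(R)$-module is finitely generated, being generated in finite degrees (it is finitely generated over $\OI$) with finitely generated values over the noetherian ring $\bk$, so Theorem~\ref{mainthm2} applies and $\bH_q(\bU,M)$ is a finitely generated $\OI$-module. Moreover $\bU$ is a normal $\OI$-subgroup of $\bB$ with quotient $\bT$, and the action of $\bB$ on the pair $(\bU,M)$ descends---since $\bU$ acts trivially on its own homology, exactly as in Proposition~\ref{prop:hochserre}(1)---to a $\bT$-module structure on $\bH_q(\bU,M)$; this is the coefficient system in the spectral sequence, and keeping it around as a $\bT$-module (not merely a $\bT^C$-module) is what makes step~(ii) work.

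For \textbf{(ii)}, I would peel $\bT^C$ apart in three stages, each time reducing through a Hochschild--Serre spectral sequence of $\OI$-modules and local noetherianity. By Lemma~\ref{lem:Rtimes-fg}, $R^\times$---hence $C$ and every $T_n^C$---is a finitely generated abelian group, so of type $\rF\rP$ over $\bk$. \emph{Stage 1.} Let $\bD_0=[n]\mapsto\{(t_i)\in(R^\times)^n:\prod_it_i=1\}$, a normal $\OI$-subgroup of $\bT^C$; since $\OI$-morphisms preserve $\prod_it_i$, the quotient $\bT^C/\bD_0$ is the \emph{constant} $\OI$-group $\ul C$. For finitely generated $L$ the $\OI$-module $\bH_p(\ul C,L)=[n]\mapsto\rH_p(C,L_n)$ is finitely generated (compute it from a finite free $\bk[C]$-resolution of $\bk$; tensoring each term with $L$ over $\bk[C]$ yields a finite sum of copies of $L$), so the spectral sequence of $1\to\bD_0\to\bT^C\to\ul C\to1$ reduces (ii) to finite generation of $\bH_q(\bD_0,N)$. \emph{Stage 2.} Write $R^\times=\bZ^r\times F$ with $F$ finite; then $\prod_it_i=1$ decomposes as $\sum_ix_i=0$ on the $\bZ^r$ factor and $\sum_if_i=0$ on the (additively written) $F$ factor, so $\bD_0=\bD_0^{\bZ^r}\times\bD_0^{F}$ as $\OI$-groups, with $\bD_0^{\bZ^r}=[n]\mapsto\ker((\bZ^r)^n\to\bZ^r)$ and $\bD_0^{F}=[n]\mapsto\ker(F^n\to F)$. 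Now $\bD_0^{F}$ has the form $[n]\mapsto A^n_{\{0\}}$ with $A=F$ and $\{0\}\subseteq F$ of finite index, so Proposition~\ref{prop:oigphom}(b) shows $\bH_p(\bD_0^{F},-)$ preserves finite generation; the coefficient system it is applied to is a module over $[n]\mapsto F^n$ (because $N$, being in fact a $\bT$-module, restricts to one, and $\bD_0^{\bZ^r}$ is normal in $\bT$), as that proposition requires. Hence the spectral sequence of $1\to\bD_0^{\bZ^r}\to\bD_0\to\bD_0^{F}\to1$ reduces us to finite generation of $\bH_q(\bD_0^{\bZ^r},N)$. \emph{Stage 3.} The $\OI$-group $\bD_0^{\bZ^r}$ has value the free abelian group $(\bZ^r)^{n-1}$ at $[n]$ and, as an $\OI$-module over $\bZ$, is a submodule of the finitely generated $\OI$-module $[n]\mapsto(\bZ^r)^n$, hence is finitely generated. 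Therefore, as in the proof of Proposition~\ref{prop:oigphom} (compare Remark~\ref{rmk:Liehom}), $\bH_q(\bD_0^{\bZ^r},N)$ is the homology of a complex of $\OI$-modules with $j$-th term $\bigwedge^{j}_{\bk}\bigl(\bD_0^{\bZ^r}\otimes_{\bZ}\bk\bigr)\otimes_{\bk}N$; these are finitely generated because $\OI$ has property~(F) (Corollary~\ref{cor:OIF}), exterior powers are subquotients of tensor powers, and tensoring with $N$ preserves finite generation. Thus $\bH_q(\bD_0^{\bZ^r},N)$ is finitely generated, completing the chain of reductions.

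I expect the main obstacle to be Stage~3: unlike the groups in Proposition~\ref{prop:oigphom}, the group $\bD_0^{\bZ^r}$ is a torsion-free ``hyperplane'' inside $[n]\mapsto(\bZ^r)^n$ rather than a product of copies of a fixed group, so that proposition does not apply to it off the shelf; the thing to get right is that $\bD_0^{\bZ^r}$ is nonetheless finitely generated as an $\OI$-module, after which the functorial Koszul-complex computation of its group homology goes through. A secondary point requiring care is the bookkeeping of coefficient systems, so that each use of Proposition~\ref{prop:oigphom}(b) is applied to a module over the correct ambient $\OI$-group---which is the reason for tracking the $\bT$-action established in step~(i).
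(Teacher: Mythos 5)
Your argument follows the same skeleton as the paper's proof: the split extension $1 \to \bU \to \bB^C \to \bT^C \to 1$ with $\bT^C = [n]\mapsto (R^\times)^n_C$, its Hochschild--Serre spectral sequence of $\OI$-modules, Theorem~\ref{mainthm2} for the $\bU$-homology, local noetherianity of $\OI$ to finish, and a finiteness statement for the homology of the quotient torus. Where you genuinely diverge is in that last ingredient. The paper disposes of the torus in one line by citing Proposition~\ref{prop:oigphom}(b) with $A=R^\times$, whereas you break $\bT^C$ down through the constant group $\ul{C}$, the ``product $=1$'' subgroup $\bD_0$, and its torsion and free parts, treating the free part $[n]\mapsto\ker((\bZ^r)^n\to\bZ^r)$ by hand via a functorial Koszul complex. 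Your longer route is not wasted effort: Proposition~\ref{prop:oigphom}(b) as stated requires $C$ to have \emph{finite index} in $A$, which the theorem does not assume (for infinite $R$ the group $R^\times$ may be infinite and $C$ of infinite index), so the paper's citation does not literally cover the general case while your Stages 1--3 do. Stage 3 is the real new content --- the observation that $\bD_0^{\bZ^r}$ is a finitely generated $\OI$-module over $\bZ$ because it is a submodule of a finitely generated one, after which the Koszul complex is a complex of finitely generated $\OI$-modules --- and it is correct; your insistence on carrying the coefficients as modules over the \emph{full} torus $\bT$ so that Proposition~\ref{prop:oigphom}(b) applies in Stage 2 is also exactly the point that the hypothesis ``$M$ is a $\bB$-module'' (rather than a $\bB^C$-module) is designed to secure.

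The one claim in your write-up that does not hold up is the parenthetical in step (i): a $\bU$-module (even a $\bB$-module) that is finitely generated as an $\OI$-module need \emph{not} underlie an $\OVI(R)$-module. The factorization $\phi=\psi f$ of an $\OVI(R)$-morphism into an automorphism followed by an $\OI$-induced map is not unique in $\psi$ when $f_0$ is not surjective --- $\psi$ is unconstrained on the basis vectors outside the image --- and the $\bU$-module axioms only control elements of $U_m$ in the image of $f_\ast\colon U_n\to U_m$, so they do not force $\psi_\ast f_\ast=\tilde\psi_\ast f_\ast$ for two such factorizations. Concretely, for $R=\bk$ the $\bB$-module $[n]\mapsto \bk^n$ with $b\cdot x=(b^{-1})^{\mathrm{T}}x$ is generated in degree one, but the two factorizations of $e_1\mapsto e_1$, $e_2\mapsto e_3$ through $R^3$ already give different answers on $e_1$, so it admits no compatible $\OVI(R)$-structure. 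Thus to invoke Theorem~\ref{mainthm2} one really needs $M$ to be handed over as a finitely generated $\OVI(R)$-module (as it is in the Iwahori application, where $M(j)$ is constructed as one). The paper's own four-line proof makes the same silent identification, so this is a defect of the stated hypotheses rather than something your argument introduces; but your justification of the extension is incorrect as written and should be replaced by strengthening the hypothesis on $M$.
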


\begin{proof}
Let $(R^\times)^n_C$ denote the subgroup of $(R^\times)^n$ consisting of sequences whose product lies in $C$.
We have a short exact sequence of groups
\[
1 \to U_n(R) \to B_n(R) \to (R^\times)^n_C \to 1.
\]
The group $R^\times$ is finitely generated by Lemma~\ref{lem:Rtimes-fg}, and thus so is $(R^\times)^n_C$.
The corollary now follows from the the Hochschild--Serre spectral sequence together with Theorem~\ref{mainthm2} and
Proposition~\ref{prop:oigphom}.
\end{proof}

\section{Application to Iwahori subgroups}
\label{s:iwahori}

The goal of this section is to prove Theorem \ref{maintheorem:iwahori}, whose statement
we now recall.  Let $\cO$ be a number ring, let $\fa \subset \cO$ be a nonzero proper ideal, 
and let $\bk$ be a commutative noetherian ring.  For $i \geq 0$, let $X(i)$ be
the $\OI$-module defined by the rule $[n] \mapsto \rH_i(\GL_{n,0}(\cO,\fa), \bk)$.
We must prove that $X(i)$ is a finitely generated $\OI$-module and that if $\bk$
is a field then $\dim X(i)_n$ equals a polynomial in $n$ for $n \gg 0$.  The 
polynomiality assertion follows from the finite generation assertion together with
Proposition \ref{prop:oidim}, so we must only prove that each $X(i)$ is a finitely
generated $\OI$-module.

Define $R = \cO/\fa$ and let $C \subset R^{\times}$ be the image of
$\cO^{\times}$ under the quotient map $\cO \rightarrow R$.  Let
$\GL_n^C(R)$ be the subgroup of $\GL_n(R)$ consisting of matrices whose
determinant lies in $C$.  Strong
approximation (see, e.g., \cite[Chapter 7]{PlatonovRapinchuk}) implies
that the map $\SL_n(\cO) \rightarrow \SL_n(R)$ is surjective.  This
implies that the map $\GL_n(\cO) \rightarrow \GL_n^C(R)$ is surjective,
which implies that the map $\GL_{n,0}(\cO,\alpha) \rightarrow B_n^C(R)$ is surjective.

We thus have a short exact sequence
\[
1 \longrightarrow \GL_n(\cO,\fa) \longrightarrow \GL_{n,0}(\cO,\fa) \longrightarrow B_n^C(R) \longrightarrow 1.
\]
The associated Hochschild--Serre spectral sequence is of the form
\[
\rH_i(B_n^C(R), \rH_j(\GL_n(\cO,\fa),\bk)) \Longrightarrow \rH_{i+j}(\GL_{n,0}(\cO,\fa), \bk) = X(i+j)_n.
\]
Let $M(j)$ be the $\OVI(R)$-module defined by $M(j)_n = \rH_j(\GL_n(\cO,\fa),\bk)$. Naturality of the above spectral sequence induces a spectral sequence
\begin{equation}
\label{eqn:ss}
\rH_i(B_n^C, M(j)) \Longrightarrow X(i+j)
\end{equation}
of $\OI$-modules.

Letting $\FI$ be the category of finite sets and injections, the rule defining $M(j)$ also endows it with an $\FI$-module structure, which is finitely generated by \cite[Theorem D]{CEFN}. The inclusion $\OI \rightarrow \FI$ satisfies Property (F) (see \cite[Theorem 7.1.4]{catgb}), so by Proposition \ref{prop:fpullback} the induced $\OI$-module structure on $M(j)$ is also finitely generated.  This implies in particular that $M(j)$ is a finitely generated $\OVI(R)$-module. Theorem \ref{theorem:bvifg} now implies that $\rH_i(B^C(R),M(j))$ is a finitely generated $\OI$-module. Since the category of $\OI$-modules is locally noetherian (see Corollary~\ref{cor:OIgrob}), we can now deduce from \eqref{eqn:ss} that each $X(i)$ is a finitely generated $\OI$-module, as desired.

\end{document}